\documentclass[11pt]{amsart}
\usepackage{amsmath,amssymb,amsfonts}
\usepackage{enumerate}
\usepackage{amscd, amssymb, latexsym, amsmath, amscd}
\usepackage[all]{xy}
\usepackage{pb-diagram}
\usepackage{mathrsfs}

\usepackage{pb-diagram}
\theoremstyle{plain}
\newtheorem{Thm}[equation]{Theorem}
\newtheorem{Conj}[equation]{Conjecture}
\newtheorem{Cor}[equation]{Corollary}
\newtheorem{Prop}[equation]{Proposition}
\newtheorem{Lem}[equation]{Lemma}
\numberwithin{equation}{section}

\theoremstyle{remark}
\newtheorem{Rmk}[equation]{Remark}

\newcommand{\Hom}{\operatorname{Hom}}
\newcommand{\Gal}{\operatorname{Gal}}
\newcommand{\GL}{\operatorname{GL}}

\newcommand{\SO}{\operatorname{SO}}
\newcommand{\Sp}{\operatorname{Sp}}
\newcommand{\GSp}{\operatorname{GSp}}
\newcommand{\GSO}{\operatorname{GSO}}

\newcommand{\Mp}{\operatorname{Mp}}

\newcommand{\U}{\operatorname{U}}
\newcommand{\Vol}{\operatorname{Vol}}
\newcommand{\BC}{\operatorname{BC}} 

\newcommand{\C}{\mathbb C}
\newcommand{\A}{\mathbb{A}}

\newcommand{\Z}{\mathbb{Z}}

\newcommand{\bm}{\begin{multline*}}
\newcommand{\tu}{\end  {multline*}}

\newcommand{\disc}{{\rm disc}}

\setlength{\oddsidemargin}{0.2in}
\setlength{\evensidemargin}{0.2in}
\setlength{\textwidth}{6.1in}

\title{Unitary Friedberg-Jacquet Periods}
\author{Rui Chen and Wee Teck Gan} 
\address{Department of Mathematics, National University of Singapore, Block S17, 10 Lower Kent Ridge Road, Singapore 119076. }
\email{matgwt@nus.edu.sg}
\email{e0046839@u.nus.edu}
\dedicatory{to Steve Kudla, \\ in admiration and appreciation\\ on the occasion of his 70th birthday}
\begin{document}
\maketitle
 
\section{\bf Introduction}
A much studied branching problem in the representation theory of reductive groups over local fields $F$ is the so-called linear period or Friedberg-Jacquet period for $\GL_{2n}(F)$. This concerns the subgroup 
\[  H = \GL_n(F) \times \GL_n(F) \subset \GL_{2n}(F), \]
which is a Levi subgroup of the maximal parabolic subgroup $P = H \cdot N$ of type $(n,n)$.  For an irreducible smooth representation $\pi$ of $\GL_{2n}(F)$, one is interested in determining $\dim \Hom_H(\pi, \C)$.  On the other hand, this linear period is  closely related to the so-called Shalika period. The Shalika period concerns the subgroup 
\[  S =  \GL_n(F)^{\Delta} \cdot N \subset P:= H \cdot N  \]
where $\GL_n(F)^{\Delta} \hookrightarrow H$ is the diagonal embedding. In this case, one takes a generic character $\psi$ of $N$ which is fixed by $\GL_n(F)^{\Delta}$ and would like to determine $\dim \Hom_S(\pi, 1 \otimes \psi)$.

The following theorem summarizes the main result, which is the culmination of the work of many people (for example \cite{CS, G,  JNQ, JR, K, KR, LM, M1, M2}).
\vskip 5pt

\begin{Thm}  \label{T:split-local}
For generic $\pi \in {\rm Irr}(\GL_{2n}(F))$,  one has
\[  \dim \Hom_H(\pi, \C)  =  \dim \Hom_S(\pi, 1 \otimes \psi) \leq 1. \]
Moreover, the above dimensions are nonzero if and only if the L-parameter of $\pi$ is symplectic. 
When $\pi$ is a discrete series representation, this last condition is equivalent to its exterior square L-factor $L(s, \pi, \wedge^2)$ having a pole at $s = 0$. Here, the exterior square L-function can be equivalently defined in terms of the L-parameter of $\pi$, by the Langlands-Shahidi method or the local zeta  integrals of Jacquet-Shalika \cite{JS}.
\end{Thm}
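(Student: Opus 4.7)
The plan is to prove the three distinct assertions of the theorem---the multiplicity-one bounds, the equality of the two dimensions, and the L-parameter criterion---separately.

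First, I would establish $\dim \Hom_H(\pi, \C), \dim \Hom_S(\pi, 1 \otimes \psi) \leq 1$ by the Gelfand-pair technique of Jacquet-Rallis \cite{JR}. In each case, one constructs an anti-involution $\sigma$ of $\GL_{2n}(F)$ preserving the subgroup in question (and the relevant character); then, by analysis of the double cosets and the Bernstein-Zelevinsky theory of equivariant distributions, one shows that every bi-invariant distribution on $\GL_{2n}(F)$ is $\sigma$-invariant, forcing multiplicity at most one.

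Next, to prove the equality $\dim \Hom_H(\pi, \C) = \dim \Hom_S(\pi, 1 \otimes \psi)$, I would use Fourier analysis on the abelian unipotent radical $N$ of $P = H N$. Restricting $\pi$ to $P$, the Bernstein-Zelevinsky filtration indexes the contributions to $\Hom_H(\pi,\C)$ by the $H$-orbits on the character group $\widehat{N}$. There is a unique open orbit, whose stabilizer is the diagonal $\GL_n^{\Delta}$ and whose character may be chosen to be $\psi$, so the open layer contributes precisely $\Hom_S(\pi, 1 \otimes \psi)$. For generic $\pi$, the non-open layers contribute nothing because the relevant twisted Jacquet modules do not support the required $H$-invariants; this vanishing is the technical core of the arguments in \cite{JR, M1, M2, LM}.

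Finally, for the L-parameter criterion, I would use the Jacquet-Shalika local zeta integral \cite{JS}, which attaches to a Shalika functional an L-factor whose pole at $s=0$ detects the nonvanishing of the functional. By the work of Matringe and Cogdell-Matringe this integral L-factor coincides with the Langlands-Shahidi exterior square L-factor, which by the local Langlands correspondence equals the Artin L-factor of $\wedge^2 \phi_\pi$. Since a self-dual irreducible Weil-Deligne representation is symplectic if and only if $\wedge^2$ of it has an L-factor with a pole at $s=0$, this handles the discrete series case; the general generic case follows from the multiplicativity of $\wedge^2$ along direct sums combined with the irreducibility and genericity of the parabolic induction. The main obstacle is precisely this last identification: showing that the Jacquet-Shalika, Langlands-Shahidi, and Artin exterior square L-factors all coincide requires stability of L-factors under highly ramified twists (together with Henniart-style global arguments), a careful induction via the multiplicativity of $\gamma$-factors, and ultimately the local Langlands correspondence for $\GL_{2n}$.
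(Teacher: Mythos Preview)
The paper does not actually prove this theorem: it is stated in the introduction as a known background result, attributed to the cumulative work in \cite{CS, G, JNQ, JR, K, KR, LM, M1, M2}, and no proof is given. So there is no ``paper's own proof'' to compare against, and your proposal is really a sketch of how the cited literature assembles the statement.

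As such a sketch, your outline is accurate and well-organized. The multiplicity-one step via the Gelfand--Kazhdan criterion is indeed what \cite{JR} does for the linear period (and similarly for the Shalika functional). For the equality $\dim \Hom_H(\pi,\C)=\dim \Hom_S(\pi,1\otimes\psi)$, your filtration argument on $N$ is essentially the approach the paper itself carries out in \S\ref{S:variant} for the unitary analog (see Proposition~\ref{P:stages} and Theorem~\ref{T:variant}); the paper explicitly remarks at the end of that section that the same argument works for $\GL_{2n}$ and is ``perhaps more direct than the approach of theta correspondence given in \cite[\S3--4]{G}.'' So your proposed method for the equality matches the paper's preferred technique, though the original reference \cite{G} used theta lifting instead. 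For the L-parameter criterion, your route through the Jacquet--Shalika integral and the identification of the various exterior-square L-factors (via \cite{K, KR, M1}) is the standard one.

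One small caution: in your middle step you say the non-open layers contribute nothing ``for generic $\pi$,'' but the actual vanishing argument (as in Theorem~\ref{T:variant} of the paper, or in \cite{M2}) typically uses temperedness to control the central exponents of Jacquet modules against the positive exponents coming from the degenerate strata. Genericity alone is not quite the right hypothesis for that exponent argument; you should be prepared to invoke the classification of generic representations of $\GL_{2n}$ (as irreducibly induced from essentially square-integrable data) and argue via multiplicativity, or else restrict the filtration argument to the tempered case and extend separately.
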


\vskip 5pt
One can also study the analogous period problem in the global setting, where one considers the Friedberg-Jacquet or Shalika period integrals of cuspidal representations of 
$\GL_{2n}$ over a number field $k$. There is an analogous global theorem:
\vskip 5pt

\begin{Thm} \label{T:split-global}
Let $\Pi$ be an irreducible  unitary cuspidal representation of $\GL_{2n}$ over a number field $k$.
\vskip 5pt

(i) $\Pi$ has nonzero global Shalika period if and only if its (partial) exterior square L-function $L(s, \Pi, \wedge^2)$ has a pole at $s  =1$.
\vskip 5pt

(ii) $\Pi$ has nonzero Friedberg-Jacquet period if and only if $L(\Pi, 1/2) \ne 0$ and $L(s, \Pi, \wedge^2)$ has a pole at $s  =1$.
\end{Thm}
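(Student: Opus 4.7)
The plan is to establish both parts via Eulerian integral representations, combining the Jacquet--Shalika and Friedberg--Jacquet methods. For part (i), the key tool is the Jacquet--Shalika integral: a global Rankin--Selberg-type integral built from a cusp form $\phi \in \Pi$, a theta function on the space of symmetric $n \times n$ matrices, and a mirabolic Eisenstein series, whose unfolding along the Whittaker model of $\Pi$ produces an Euler product in which the local factors are the Jacquet--Shalika local zeta integrals defining $L(s, \Pi_v, \wedge^2)$. The global integral is meromorphic with a possible pole at $s = 1$ whose residue is a nonzero scalar multiple of the Shalika period of $\phi$. This simultaneously yields both directions of (i): the residue vanishes identically when the Shalika period does, so a pole of the partial exterior square L-function forces a nonzero Shalika period; and conversely a nonzero Shalika period produces the pole, once one checks that the local correction factors at ramified places do not annihilate the pole.

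For part (ii), I would use the Friedberg--Jacquet zeta integral
$$Z(s, \phi) = \int_{H(k) \backslash H(\A)} \phi\!\begin{pmatrix} h_1 & \\ & h_2 \end{pmatrix} \left| \frac{\det h_1}{\det h_2} \right|^{s - 1/2} dh,$$
which converges absolutely for all $s \in \C$ by cuspidality of $\phi$. A Whittaker-model unfolding rewrites $Z(s, \phi)$ as an Euler product and, crucially, exhibits the inner integration as a global Shalika functional. Schematically one obtains
$$Z(s, \phi) \;=\; L(s, \Pi) \cdot \Lambda_{\mathrm{Shal}}(\phi) \cdot \prod_v (\text{local ratios}),$$
so that the Friedberg--Jacquet period (at $s = 1/2$) vanishes on $\Pi$ precisely when either $L(1/2, \Pi) = 0$ or the Shalika period vanishes on $\Pi$. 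Combined with (i), this gives the forward implication. The converse requires choosing test vectors at the ramified places for which the local Friedberg--Jacquet zeta integrals are nonzero at $s = 1/2$, and at unramified places the local integrals are simply the standard local L-factors and pose no issue.

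The hardest step is the local nonvanishing that underwrites the converse of (ii): given a generic irreducible unitary representation $\Pi_v$ admitting a Shalika model, one must produce a test vector on which the local Friedberg--Jacquet integral is nonzero at $s = 1/2$. For non-archimedean $v$ this rests on Matringe's local analysis and the equivalence between Shalika models and linear models; the archimedean case requires more delicate results of a similar flavor. The parallel obstacle in (i) --- that a pole of the partial exterior square L-function propagates to a nonzero Shalika period after accounting for local L-factors --- is handled by the standard holomorphy and nonvanishing of $L(s, \Pi_v, \wedge^2)$ at $s = 1$ for generic unitary $\Pi_v$. With these local inputs in hand, the remaining global-to-local bookkeeping is routine.
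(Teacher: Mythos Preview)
The paper does not give its own proof; it simply attributes (i) to \cite{JS} and (ii) to \cite{BF}. Your sketch for (i) is essentially the Jacquet--Shalika argument the paper is invoking (the phrase ``theta function on symmetric matrices'' is a bit off --- the JS integral pairs $\phi$, integrated over the Shalika subgroup, with a mirabolic Eisenstein series, and the residue of that Eisenstein series at $s=1$ yields the Shalika period as the residue of the global integral).

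For (ii) you take a different route from the one the paper cites. The paper points to the Bump--Friedberg integral \cite{BF}, which is the linear period of $\phi$ against a mirabolic Eisenstein series on one $\GL_n$ factor tensored with a character on the other; that integral represents $L(s_1,\Pi)\cdot L(s_2,\Pi,\wedge^2)$ directly, so both conditions in (ii) come out of a single unfolding. You instead use the Friedberg--Jacquet integral of \cite{FJ}, which unfolds to $L(s,\Pi)$ times a global Shalika functional and therefore needs part (i) as a separate ingredient. Both approaches are valid: the Bump--Friedberg route is more self-contained for (ii), while yours makes the dependence of the linear period on the Shalika period structurally explicit, which is in fact well aligned with the paper's broader theme.
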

These global results are consequences of \cite{JS} for (i) and \cite{BF} for (ii). What is interesting here is the appearance of the extra condition on the nonvanishing of the standard L-function at $s = 1/2$ in (ii).
\vskip 5pt

Variants of the above periods have been considered. For example, instead of taking the trivial character of $H$ or $\GL_n(F)^{\Delta}$, one could take any unitary character for the above periods. Further, Jacquet has initiated the study of  the $\GL_n(E)$-period in $\GL_{2n}(F)$ or its inner forms, where $E/F$ is a quadratic \'etale algebra; when $E = F \times F$, one recovers the linear period above.   Freidberg-Jacquet \cite{FJ} and Guo \cite{G1, G2} have proposed a relative trace formula (RTF) approach to classify representations of $\GL_{2n}(F)$ or its inner forms  distinguished by $\GL_n(E)$.  There has been continual work on these periods and RTF's in the past 25 years. The same is true for the Shalika periods on inner forms of $\GL_{2n}$, as exemplified by the recent paper of Beuzart-Plessis and Wan \cite{BW1}, which introduces a local trace formula approach to study such problems.
\vskip 10pt

On the other hand, one can consider  analogs of the above periods in the setting of unitary groups. More precisely, if $\U_{2n}(F)$ is a unitary group associated to a rank $2n$ Hermitian or skew-Hermitian space , one may consider
\vskip 5pt

\begin{itemize}
\item[(a)] unitary Friedberg-Jacuqet (FJ) periods with respect to subgroups $\U_n \times \U_n \subset \U_{2n}$;
\vskip 5pt

\item[(b)] unitary Shalika periods with respect to a Shalika subgroup $S = \U_n^{\Delta} \cdot N$  when $\U_{2n}$ is quasi-split.
\end{itemize}

\noindent We will provide more precise definitions of these in \S \ref{SS:USP} and \S \ref{SS:UFJ} below.
In the global setting, the paper \cite{GW} of Getz-Wambach formulates a general principle relating these unitary versions to the  $\GL$ case considered earlier, from the viewpoint of the relative trace formula (RTF).   This RTF approach for the unitary FJ periods is being pursued 
in the ongoing work of Jingwei Xiao and Wei Zhang, which attempts to compare the RTF's on unitary groups and general linear groups, and that of Spencer Leslie \cite{L1, L2} which attempts to develop a theory of endoscopy for the RTF in question. In addition, the recent paper \cite{PWZ} of Pollack-Wan-Zydor considers these period problems through the lens of  the residue method pioneered by Ginzburg-Rallis-Soudry and Jiang.   In the local nonarchimedean setting, there is the recent work \cite{BW2, BW3} of Beuzart-Plessis and Wan which extends \cite{BW1} and determines precisely the multiplicity of unitary Shalika periods  for discrete series representations of the quasi-split $\U_{2n}$, via a local trace formula.  
\vskip 10pt

With this background information, the main goal of this paper is to study the unitary FJ period through their connection with the unitary Shalika period via theta correspondence.
This connection via theta correspondence has been expounded in the paper \cite{G} of the second author. Locally, this allows us to determine 
\[  \dim \Hom_{\U_n \times \U_n}(\pi,  \chi_1 \otimes \chi_2) \]
for any irreducible discrete series representation of $\U_{2n}(F)$ and arbitrary characters $\chi_1$ and $\chi_2$ of $\U_n(F)$. In addition, we shall determine
\[  \dim \Hom_{\GL_n(E)}(\pi,  \chi)  \]
for any irreducible discrete series representation $\pi$ of $\U_{2n}(F)$ and  any unitary character $\chi$ of $\GL_n(E)$. 
This gives the unitary version of Theorem \ref{T:split-local} for discrete series representations.
Globally, we shall  prove one direction of a conjecture of Xiao-Zhang, stating that the nonvanishing of a global unitary FJ-period implies the nonvanishing of the central value of the (twisted) standard L-function. This is a partial analog of Theorem \ref{T:split-global}(ii).  Indeed, our method allows us to treat the general case of $(\U_a \times \U_b)$-period
in $\U_n$, with $a+b = n$, though we only highlight the case when $|a- b| \leq 1$ in this paper. 
\vskip 5pt

We shall leave the precise formulation of the theorems to the main body of the article, and conclude this introduction with a summary of the subsequent sections: 

\vskip 10pt

\begin{itemize}
\item In \S \ref{S:prelim}, we begin with introducing some basic preliminaries and notations for $\epsilon$-Hermitian forms, their isometry groups and the Siegel parabolic subgroup, before giving the precise definitions of the unitary FJ and Shalika periods. 

\vskip 5pt

\item In \S \ref{S:theta}, we explicate the precise relation between the two using local theta correspondence. 
This relation has been shown in the paper \cite{G} of the second author, for both the symplectic-orthogonal and unitary dual pairs. However, there were some unfortunate misprints in various formulas in that paper which render the final result (in Theorem 2.5) inaccurate in the unitary case (the formula remains valid in the symplectic-orthogonal case).  Because of this, we give a retreatment of \cite[Thm. 2.5]{G} here. The main results are Theorem \ref{T:SFJ} and Corollary \ref{C:SFJ}.
\vskip 5pt

\item In \S \ref{S:USP}, we recall the results of Beuzart-Plessis and Wan on the multiplicities of unitary Shalika period for discrete series representations of quasi-split $\U_{2n}$.
Their results serve as  the crucial input for this paper.  
\vskip 5pt

\item In \S \ref{S:UFJ}, we combine the results of the previous two sections to determine the multiplicities of unitary FJ periods for discrete series representations of arbitrary $\U_{2n}$. The main results are given in Theorem \ref{T:UFJ-1} and Theorem \ref{T:UFJ-2}. Their variants for the $(\U_n \times\U_{n+1})$-periods in $\U_{2n+1}$ are contained in Theorem \ref{T:UFJ-I} and Theorem \ref{T:UFJ-II}.

\vskip 5pt

\item In \S \ref{S:variant}, we consider yet another variant of the FJ period, namely the $\GL_n(E)$-period for representations of quasi-split $\U_{2n}(F)$. By an internal analysis, we 
show a precise result relating this period to the unitary Shalika period. This thus allows us to determine the multiplicities of this period for discrete series representations.
\vskip 5pt
 
\item  In \S \ref{S:global}, we consider the global setting and recall a conjecture of Xiao-Zhang in the spirit of Theorem \ref{T:split-global}(ii). After establishing the global analog of the relation between unitary FJ and Shalika periods, we prove one direction of their conjecture in Proposition \ref{P:XZ-1} and Corollary \ref{C:XZW}. The analog for the global $(\U_n \times\U_{n+1})$-periods in $\U_{2n+1}$ is contained in Proposition \ref{P:XZ-2}.
\vskip 5pt

\item  Finally, in \S \ref{S:final}, we revisit the Rankin Selberg-integrals of Jacquet-Shalika and Bump-Friedberg which are responsible for giving Theorem \ref{T:split-global}. These Rankin-Selberg integrals suggest two new branching problems, with the mirabolic Eisenstein series replaced by the Weil representation of unitary groups. This is similar in spirit to the twisted GGP conjecture recently formulated in \cite{GGP}. We hesitate to make precise conjectures for these branching problems here, but merely want to suggest that they are natural problems which should have nice answers.
\end{itemize}
\vskip 15pt

 \section{\bf Preliminaries}  \label{S:prelim}
 
 In this section, we introduce the basic setup before defining the unitary Shalika period and Friedberg-Jacquet periods. 
\vskip 5pt

\subsection{\bf Fields} \label{SS:fields}
Let $F$ be a field of characteristic not $2$ and let $E$ be a  (separable) quadratic field extension of $F$ with trace map $Tr_{E/F}$ and norm map $N_{E/F}$. 
Let 
\[  E_0 = \{ e \in E: Tr_{E/F}(e) =0\} \quad \text{and} \quad  E_1 = \{ e \in E:  N_{E/F}(e) =1\}. \]
Then $E_0$ is a 1-dimensional $F$-vector space and $E_1$ is a one-dimensional anisotropic torus.
There is a natural isomorphism
\[  i : E^{\times}/ F^{\times}  \longrightarrow  E_1 \quad \text{given by $i(e) = e / e^c $,}  \]
where $c: e \mapsto e^c$ is the nontrivial element of ${\rm Gal}(E/F)$.
 
\vskip 5pt

\subsection{\bf Conjugate spaces}
Let $X$ be a finite-dimensional $E$-vector space. It will be necessary for us to consider the conjugate vector space
\[  X^c :=  X \otimes_{E,c} E. \]
The elements of $X^c$ are linear combinations of $x \otimes e$ with $x \in X$ and $e \in E$, so that $x \otimes e = e^cx \otimes 1$.
An element $g \in \GL(X)$ acts naturally and $E$-linearly on $X^c$ by $g \cdot (x \otimes 1) = (g \cdot x) \otimes 1$. 
This gives a natural map
\[  \iota: \GL(X) \longrightarrow \GL(X^c). \]
Note however that
\begin{equation} \label{E:det}
 {\det}_{X^c} \circ \iota  = {\det}_X^c. \end{equation}
\vskip 5pt

\subsection{\bf $\epsilon$-Hermitian forms} \label{SS:forms}
A nondegenerate  $\epsilon$-Hermitian form on $X$ is a nondegenerate $F$-bilinear pairing
\[  \langle- , -\rangle_X : X \otimes X \longrightarrow E \]
such that
\vskip 5pt

\begin{itemize}
\item $\langle -, -\rangle$ is $E$-linear in the first variable and $E$-conjugate-linear in the second;

\item for $x_1, x_2 \in X$,
\[  \langle x_1, x_2 \rangle_X^c  = \epsilon \cdot \langle x_2 , x_1 \rangle_X . \]
 \end{itemize}
 When $\epsilon =1$, the form is Hermitian, whereas if $\epsilon = -1$, it is skew-Hermitian.
 \vskip 5pt
 
 Now we may regard $\langle- , - \rangle_X$ as an $E$-bilinear pairing $X \times X^c \rightarrow E$, via 
  $(x_1, x_2 \otimes 1) \mapsto \langle x_1, x_2 \rangle_X$. 
 Since  $X$ and $X^c$ play symmetrical roles,  we can naturally  regard $\langle-, -\rangle_X$  as an $\epsilon$-Hermitian form on $X^c$:
 \[  \langle x_1\otimes 1, x_2 \otimes 1 \rangle_{X^c} = \langle x_2, x_1 \rangle_X = \epsilon \cdot \langle x_1, x_2 \rangle_X^c. \]
We shall make such natural identifications below without too much further comment.
\vskip 5pt

Let $\U(X) \subset \GL(X)$ denote the isometry group of $\langle-,-\rangle_X$.  For $g \in \U(X)$, it is easy to verify that $\iota(g) \in \GL(X^c)$ preserves the naturally associated $\langle-,-\rangle_{X^c}$. In other words, the natural map $\iota: \GL(X) \rightarrow \GL(X^c)$ restricts to give a natural isomorphism $\U(X) \cong \U(X^{c})$.

\vskip 5pt

 \subsection{\bf Siegel parabolic} \label{SS:siegel}
 Assume that  $W$ is a skew-Hermitian space of even dimension $n$ which is maximally split, in the sense that its maximal isotropic subspaces are of dimension $n/2$. 
 Then we have a Witt decomposition
\[  W = X \oplus  Y, \quad \text{with $X$ and $Y$ maximal isotropic.} \]
The skew-Hermitian form $\langle-,-\rangle_W$ gives an identification of $Y$ with the $E$-linear dual $\Hom_E(X,E)$ of $X$ via:
\[  y :  x \mapsto \langle x, y \rangle_W. \]
However, the map $Y \longrightarrow \Hom_E(X,E)$ is conjugate-$E$-linear, rather than $E$-linear. In other words,  we have an $E$-linear isomorphism
\[  Y^c \cong   \Hom_E(X,E). \]
Equivalently, $X \cong \Hom_E(Y^c, E)$. 
\vskip 5pt

 The stabilizer $P(X)$ of $X$ in $\U(W)$ is a maximal parabolic subgroup known as a Siegel parabolic. Its unipotent radical $N(X)$ is abelian and can be described as follows.
 For $A \in \Hom_E(Y,X)$, let $n(A) \in \GL(W)$ be the element   which is the identity map on $X$ and $n(A)(y) = y + A(y)$ for $y\in Y$. Then $n(A)$ preserves $\langle-,-\rangle_W$ if and only if 
 \begin{equation} \label{E:N}
   \langle Ay_1, y_2 \rangle_W  + \langle y_1, Ay_2 \rangle_W = 0 \quad \text{for all $y_1, y_2 \in Y$.} \end{equation}
 Then
\[  N(X) \cong  \{ n(A): A  \in \Hom_E(Y,X) \, \text{and (\ref{E:N}) holds}   \}  \hookrightarrow \Hom_E(Y,X).  \]

Now an element $n(A) \in N(X)$ can be regarded as a Hermitian form on $Y$, via
\[  A_Y (y_1, y_2) := \langle Ay_1, y_2 \rangle. \]
As we mentioned in the previous subsection, we may also regard $A$ as a Hermitian form on $Y^c$. Denoting this Hermitian form by $A_{Y^c}$, we note that
\[  A_{Y^c}(y_1 \otimes 1, y_2 \otimes 1)  = A_Y(y_2, y_1) = \langle Ay_2, y_1 \rangle_W.  \]
In the following, we shall use $A$ to denote the Hermitian form on $Y$ and $A^c$ to denote the corresponding Hermitian form  on $Y^c$.
\vskip 5pt

The subgroup of $P(X)$ stabilizing the Witt decomposition $W = X \oplus Y$ is a Levi subgroup $M_{X,Y}$. By definition, it is equipped with natural isomorphisms
\[  M_{X,Y} \longrightarrow \GL(X) \quad \text{and} \quad M_{X,Y} \longrightarrow \GL(Y). \]
We shall identify $M_{X,Y}$ with $\GL(X)$ and thus inherits an identification $\dagger: \GL(X) \longrightarrow \GL(Y)$ characterized by
\[   \langle g \cdot x,  g^{\dagger} \cdot y \rangle_W = \langle x,  y \rangle_W. \]
Recalling that $Y \cong \Hom_E(X^c, E)$, the action of $\GL(X)$ on $Y$ through $\dagger$ is nothing but the natural action of $\GL(X)$ on $\Hom_E(X^c, E)$ via $\iota$ and duality. 
\vskip 5pt

Applying the above discussion to the Siegel parabolic $P(Y)$ stabilizing $Y$, we see that the elements $n(B)$ of its unipotent radical $N(Y) \subset \Hom_E(X,Y)$ can be regarded as Hermitian forms  on $X$ or $X^c$, denoting these by $B$ and $B^c$ respectively. As explained above, the isomorphism $\iota: \GL(X) \rightarrow \GL(X^c)$ restricts to an isomorphism $\U(X,B) \cong \U(X^c, B^c)$.

\vskip 5pt
 \subsection{\bf Non-archimedean fields} \label{SS:nonarch}
We suppose henceforth that $F$ and $E$ are nonarchimedean local fields and let $\omega_{E/F}$ denote the quadratic character of $F^{\times}$ associated to $E/F$ by local class field theory. We also fix a non-zero element $\delta\in E_0$. Throughout this paper, we will let $V$ denote a finite-dimensional Hermitian space with form $\langle-, -\rangle_V$ and $W$ a skew-Hermitian space over $E$ with form $\langle-,-\rangle_W$. Let us briefly recall the classification of such $\epsilon$-Hermitian spaces over $E$.
\vskip 5pt

For each given dimension $n$, there are precisely two distinct Hermitian spaces $V$, distinguished by the sign
\[
	\epsilon(V)=\omega_{E/F}(\disc V),
\]
where
\[  
	\disc(V) = (-1)^{n(n-1)/2} \cdot \det(V) \in F^{\times}/ N_{E/F}(E^{\times}). 
\]
Likewise, there are two distinct skew-Hermitian spaces $W$, distinguished by the sign
\[
	\epsilon(W)=\omega_{E/F}(\delta^{-n}\cdot\disc W),
\] 
where
\[  
	\disc(W) = (-1)^{n(n-1)/2} \cdot \det(W) \in \delta^n\cdot F^{\times}/ N_{E/F}(E^{\times}). 
\]
Notice that the sign of an odd skew-Hermitian space depends on the choice of $\delta$. Here, we are following the setup of \cite{GI2}.

\vskip 5pt

When $n$ is odd, both the Hermitian (or skew-Hermitian) spaces are maximally split, in the sense that their maximal isotropic subspaces have dimension $(n-1)/2$. When $n$ is even, 
exactly one of the two Hermitian (resp. skew-Hermitian) spaces is maximally split, namely the one with trivial sign.  
Let $\U(V)$ and $\U(W)$ denote the isometry groups of $V$ and $W$ respectively. These isometry groups are quasi-split precisely when the underlying space is maximally split. 

\vskip 5pt

 \subsection{\bf Unitary Shalika  Periods}  \label{SS:USP}

We are now ready to introduce the unitary Shalika   period.
Assume  
\[  W = X \oplus Y  \]
with $X$ and $Y$ maximal isotropic and consider the   maximal (Siegel) parabolic subgroup
\[  P(X)  = \GL(X)  \cdot N(X)  \]
stabilizing $X$. 
 
 Fix a nontrivial additive character $\psi$ of $F$. Using $\psi$,
we may identify the Pontryagin dual of $N(X)$ with the unipotent radical $N(Y) \subset \Hom(X, Y)$  of the opposite parabolic $P(Y)$.  More precisely,  the element $n(B) \in N(Y)$ gives the character
\[  \psi_B (n(A))  = \psi\left(\frac{1}{2}\,Tr_X(A \circ B)\right)  \quad \text{  for $n(A) \in N(X)$.}  \]
 
 \vskip 5pt

For a given $B \in N(Y)$, the stabilizer of $\psi_B$ in $\GL(X)$ is the isometry group $\U(X,B) \cong  U(X^c, B^c)$ (isomorphism given by $\iota$). The subgroup
\[  S_B = \U(X,B) \ltimes N(X) \subset P(X) \]
is the Shalika subgroup relative to $B$. For any character $\mu \circ \det_X$ of $\U(X,B)$ (where $\mu$ is a character of $E_1$), $(\mu\circ \det_X) \boxtimes \psi_B$ defines a character of $S_B$ and the $(B, \mu)$-Shalika period of $\pi \in {\rm Irr}(\U(W))$ is the space
\[  
	Sha_{B}(\pi, \mu):= \Hom_{S_B}(\pi, \mu \circ {\det}_X \boxtimes \psi_B). 
\]
By duality and Frobenius reciprocity,
\[  Sha_B(\pi,\mu) \cong \Hom_{\U(W)} \left(  {\rm ind}_{S_B}^{\U(W)} (\mu^{-1} \circ {\det}_X\boxtimes \psi_B^{-1}), \pi^{\vee}\right). \] 
We shall set
\[   Sha(B,\mu) := {\rm ind}_{S_B}^{\U(W)} (\mu \circ  {\det}_X  \boxtimes \psi_B)  \]
and call it the Shalika module of $\U(W)$ relative to $(B,\mu)$. 
\vskip 5pt

Now there is of course no reason to insist that $\mu$ is a 1-dimensional character in the above. We could have replaced $\mu$ by any smooth representation $\sigma$ of $\U(X,B)$ and 
thus have the period space $Sha_B(\pi, \sigma)$ and the $\U(W)$-module $Sha(B,\sigma)$.

\vskip 5pt
\subsection{\bf Unitary Friedberg-Jacquet period}  \label{SS:UFJ}
Now suppose
\[   V = V_0 \oplus V_0^{\perp}, \]
where $V_0$ is a nondegenerate Hermitian subspace of $V$. Then 
\[  H_{V_0} := \U(V_0) \times \U(V_0^{\perp})  \subset \U(V). \]
For two characters $\mu_1$ and $\mu_2$ of $E_1$, the $(\mu_1, \mu_2)$-linear period of $\pi \in {\rm Irr}(\U(V))$ is the space
\[  Lin_{V_0}(\pi, \mu_1 \boxtimes \mu_2):= \Hom_{H_{V_0}}(\pi, (\mu_1 \circ {\det}_{V_0})  \boxtimes (\mu_2  \circ {\det}_{V_0^{\perp}})). \]
We also call these the unitary Friedberg-Jacquet periods.
\vskip 5pt

As in the case of (unitary) Shalika periods, we also set
\[  
	Lin(V_0, \sigma_1 \boxtimes \sigma_2) := {\rm ind}_{\U(V_0) \times \U(V_0^{\perp})}^{\U(V)} \sigma_1\boxtimes \sigma_2 
\]
for smooth representations $\sigma_1$ and $\sigma_2$ of $\U(V_0)$ and $\U(V_0^{\perp})$ respectively. We shall call this the Friedburg-Jacquet module of $\U(V)$ relative to $(\sigma_1, \sigma_2)$.
\vskip 15pt

\section{\bf Theta Correspondence and Periods}  \label{S:theta}
This section serves as an erratum to \cite[Thm. 2.5]{G}, which relates the unitary Shalika period with the unitary Friedberg-Jacquet period via theta correspondence. This is necessitated by the occurrence of several unfortunate misprints in \cite[Prop. 2.1 and Thm. 2.5]{G}.
\vskip 5pt

\subsection{\bf Theta correspondence} \label{SS:theta-splitting}
Let $W$ be a skew-Hermitian space  and $V$ a Hermitian space over $E$, with isometry groups $\U(W)$ and $\U(V)$ respectively. Then the group $\U(W) \times \U(V)$ has an associated Weil  representation $\Omega_{\psi}$ which depends also on a pair of characters $(\chi_V, \chi_W)$ of $E^{\times}$ (see \cite[\S 4]{GI} for details). This pair of characters satisfies:
\[  \chi_V|_{F^{\times}} = \omega_{E/F}^{\dim V} \quad \text{and} \quad \chi_W|_{F^{\times}} = \omega_{E/F}^{\dim W}. \]
Suppose that $W = X \oplus Y$ is a Witt decomposition (so that $\dim W$ is maximally split and even-dimensional).
We will work with the Schr\"odinger model of the Weil representation $\Omega$ relative to the Siegel parabolic $P(X)$. This is realized on the space $\mathcal{S}(Y \otimes V)$ of Schwarz functions on $Y \otimes V = \Hom_E(X^c, V)$. The action of $\U(V)\times P(X)$ is given as follows:
\vskip 5pt

\begin{align} 
&(h \cdot \phi)( T)  = \chi_W(i^{-1}({ \det}_V(h))) \cdot  \phi(h^{-1} \circ T) &\quad \text{  for $h \in \U(V)$;} \label{F:Schro-1}\\
& (m \cdot \phi) (T)  = \chi_V({\det}_X(m)) \cdot  |{\det}_X(m)|^{\frac{1}{2} \dim V}  \phi(T \circ m) &\quad \text{  for $m \in \GL(X)$;} \label{F:Schro-2}\\
& (n(A)  \cdot \phi)(T)  = \psi_{T^*(V)} (A)  \cdot \phi(T)  &\quad \text{  for $n(A) \in N(X)$.} \label{F:Schro-3} 
\end{align}
Here  $T^*(V)$ is the Hermitian form on $X^c$ obtained by pulling back the Hermitian form on $V$ using $T \in \Hom_E(X^c, V)$; as we have noted, $T^*(V)$ can be naturally regarded as Hermitian form on $X$ and as an element in $\Hom_E(X,Y)$.
 \vskip 5pt
 
 It may be pertinent to point out the misprints in \cite[\S 2]{G}:
 \begin{itemize}
 \item for the action of $h \in \U(V)$ in (3.1), the RHS was written as $\chi_W ({\det}_V(h))$ in \cite{G}, i.e. $i^{-1}$ was missing there.
 \item the space $Y \otimes V$ (denoted by $X^* \otimes V$ in \cite{G}) was identified with $\Hom_E(X, E)$ in \cite{G}, instead of $\Hom_E(X^c, V)$; in other words, $X$ should be $X^c$ at various places in \cite[\S 2]{G}.
 \end{itemize}
 Note that these issues only affect the case of unitary dual pairs, so that \cite[Thm. 2.5]{G} is correct for symplectic-orthogonal dual pairs.

\vskip 5pt

For an irreducible  representation $\pi$ of $\U(V)$, we have its big theta lift $\Theta(\pi)$ which is a smooth representation of $\U(W)$, so that there is a functorial isomorphism
\[  \Hom_{\U(V)\times\U(W)}( \omega_{\psi},   \pi \boxtimes \Sigma)  \cong \Hom_{\U(W)}(\Theta(\pi),    \Sigma)  \]
for any smooth representation $\Sigma$ of $\U(W)$.  
\vskip 5pt

\subsection{\bf Transfer of periods} \label{SS:TOP}
The purpose of this subsection is to determine
\[  \Theta(\pi)_{N(X), \psi_B}^*  = \Hom_{N(X)}(\Theta(\pi), \psi_B) =   \Hom_{\U(V) \times N(X)}(\Omega, \pi \boxtimes \psi_B)  \]
for $\pi \in {\rm Irr}(\U(V))$ and $B \in N(Y)$ a nondegenerate Hermitian form on $X$. We first note that
\[ 
 \Theta(\pi)_{N(X), \psi_B}^*    = \Hom_{\U(V)}( \Omega_{N(X), \psi_B}, \pi). \]
The following proposition determines $\Omega_{N(X), \psi_B}$.
\vskip 5pt

\begin{Prop}
As a representation of $\U(V)$,  $\Omega_{N(X), \psi_B} = 0$ if there is no embedding of Hermitian spaces $B^c  \hookrightarrow   V$. If there is an embedding $j : B^c \hookrightarrow V$ of Hermitian spaces, then we may write $V = j(X^c)  \oplus j(X^c)^{\perp}$ and
\begin{align}
  \Omega_{N(X), \psi_B} & \cong  {\rm ind}_{\U(j(X^c)^{\perp})}^{\U(V)}  \left( \chi_W \circ i^{-1} \circ {\det}_{j(X^c)^{\perp}}  \right)\notag \\
  &\cong (\chi_W \circ i^{-1} \circ {\det}_V) \cdot  {\rm ind}_{\U(j(X^c)^{\perp})}^{\U(V)} 1. \notag
  \end{align}
\end{Prop}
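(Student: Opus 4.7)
The plan is to compute the twisted Jacquet module $\Omega_{N(X), \psi_B}$ directly in the Schr\"odinger model on $\mathcal{S}(\Hom_E(X^c, V))$, exploiting the fact that by formula (\ref{F:Schro-3}), $N(X)$ acts diagonally: for $T \in \Hom_E(X^c, V)$, the delta at $T$ is an eigenvector for $N(X)$ with character $\psi_{T^*(V)}$. A standard Fourier/localization argument for an abelian unipotent group acting on a function space through the ``moment map'' $T \mapsto T^*(V)$ then yields an identification
\[ \Omega_{N(X), \psi_B} \cong \mathcal{S}(\mathcal{O}_B), \qquad \mathcal{O}_B := \{T \in \Hom_E(X^c, V) : T^*(V) = B\}, \]
of $\U(V)$-modules, where $\mathcal{S}(\mathcal{O}_B)$ carries the twisted action inherited from (\ref{F:Schro-1}). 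Concretely, Schwartz functions supported off $\mathcal{O}_B$ die in the twisted Jacquet module (at any such $T$ one finds $n(A) \in N(X)$ with $\psi_{T^*(V)}(n(A)) \neq \psi_B(n(A))$, and averaging kills the function), while restriction to $\mathcal{O}_B$ descends; nondegeneracy of $B$ ensures that $\mathcal{O}_B$ is closed in $\Hom_E(X^c, V)$.

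The condition $T^*(V) = B$ is precisely the statement that $T : (X^c, B^c) \to V$ is an isometric embedding of Hermitian spaces. Hence if no such embedding exists, $\mathcal{O}_B = \emptyset$ and $\Omega_{N(X), \psi_B} = 0$, giving the first assertion. Otherwise, Witt's extension theorem says $\U(V)$ acts transitively on the set of such embeddings, so fixing one such $j$ gives a $\U(V)$-equivariant bijection $\mathcal{O}_B \cong \U(V)/\U(j(X^c)^\perp)$; here the stabilizer is the pointwise stabilizer of $j(X^c)$ in $\U(V)$, and nondegeneracy of $B^c$ guarantees the orthogonal decomposition $V = j(X^c) \oplus j(X^c)^\perp$.

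Finally, (\ref{F:Schro-1}) shows that on $\mathcal{S}(\mathcal{O}_B)$ the $\U(V)$-action is the natural translation action twisted by the character $\chi := \chi_W \circ i^{-1} \circ \det_V$ of $\U(V)$. Since $\mathcal{S}(\U(V)/\U(j(X^c)^\perp))$ with untwisted action is ${\rm ind}_{\U(j(X^c)^\perp)}^{\U(V)} 1$, tensoring by $\chi$ yields the second displayed isomorphism of the proposition; the first isomorphism then follows from the projection formula, once one observes that the restriction of $\det_V$ to $\U(j(X^c)^\perp)$ agrees with $\det_{j(X^c)^\perp}$, because elements of $\U(j(X^c)^\perp)$ act as the identity on $j(X^c)$.

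The main obstacle is the initial Fourier-analytic step identifying $\Omega_{N(X), \psi_B}$ with $\mathcal{S}(\mathcal{O}_B)$; although standard, one must verify that the fibration $T \mapsto T^*(V)$ behaves well enough (e.g., $\mathcal{O}_B$ being closed and carrying a smooth $\U(V)$-action with a single orbit) for the reduction to go through. A secondary piece of bookkeeping, highlighted by the erratum nature of this section, is tracing through the identifications of \S\ref{SS:siegel} carefully enough to confirm that the pull-back form $T^*(V)$, when regarded via the natural isomorphisms as an element of $N(Y) \subset \Hom_E(X, Y)$, pairs with $n(A) \in N(X)$ via exactly the trace pairing used to define $\psi_B$ in \S\ref{SS:USP}.
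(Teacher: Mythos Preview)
Your proposal is correct and follows essentially the same argument as the paper: identify $\Omega_{N(X),\psi_B}$ with $\mathcal{S}(\mathcal{O}_B)$ via the action of $N(X)$ in the Schr\"odinger model, then use Witt's theorem to recognize $\mathcal{O}_B$ as a single $\U(V)$-orbit with stabilizer $\U(j(X^c)^\perp)$, and finally read off the character twist from (\ref{F:Schro-1}). The only cosmetic discrepancy is that you write the fiber condition as $T^*(V)=B$ whereas the paper writes $T^*(V)=B^c$; since you go on to interpret it correctly as ``$T:(X^c,B^c)\to V$ is an isometric embedding,'' this is just a matter of whether one views the pulled-back form on $X$ or on $X^c$, and your version is internally consistent.
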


\begin{proof}
 
From the formulas for the Schr\"odinger model of $\Omega$ given in (3.1)-(3.3), one deduces that there is a natural isomorphism of $\U(V)$-modules:
\[  \Omega_{N(X), \psi_B} \cong  \mathcal{S} ( \mathcal{O}_B) , \]
where $\mathcal{O}_B \subset \Hom_E(X^c,V)$ is the Zariski closed subset
\[  \mathcal{O}_B = \{  T  :   T^*(V)   = B^c \} = \{ \text{embeddings of Hermitian spaces $B^c \hookrightarrow V$}\}. \]
Here, note that $\U(V)$ preserves $\mathcal{O}_B$ and its action on $\mathcal{S}(\mathcal{O}_B)$ is geometric.
 The  natural projection $\Omega \rightarrow \Omega_{N(X),\psi_B}$ is simply given by the restriction of functions from $Y \otimes V$ to the Zariski closed subset $\mathcal{O}_B$.  
\vskip 5pt

Hence, if $\mathcal{O}_B$ is empty, i.e. if there is no embedding $B^c \hookrightarrow V$, then $\Omega_{N(X), \psi_B} =0$, as desired.  On the other hand, if $\mathcal{O}_B$ is nonempty, then Witt's theorem says that $\U(V)$ acts transitively on $\mathcal{O}_B$. If we fix a base point $j \in \mathcal{O}_B$, and write  $V = j(X^c)  \oplus j(X^c)^{\perp}$, then the stabilizer of $j$ in $\U(V)$ is equal to $\U(j(X^c)^{\perp})$. As a consequence, we deduce that
\[  \Omega_{N(X), \psi_B}  \cong  {\rm ind}_{\U(j(X^c)^{\perp})}^{\U(V)}  \chi_W \circ i^{-1} \circ {\det}_{j(X^c)^{\perp}} \]
as desired.
\end{proof}

\vskip 5pt

From the proposition, we see that 
\begin{align}
   \Hom_{N(X)}(\Theta(\pi), \psi_B)  &\cong \Hom_{\U(V)} ( {\rm ind}_{\U(j(B^c)^{\perp})}^{\U(V)}  \chi_W \circ  i^{-1} \circ {\det}_V , \pi)  \notag \\
   &\cong \Hom_{\U(j(B^c)^{\perp})}(\pi^{\vee}, \chi_W^{-1} \circ i^{-1} \circ {\det}_V) \notag
   \end{align}
as vector spaces. Here to get the second isomorphism, we have made use of duality and Frobenius reciprocity.
\vskip 5pt

 In fact,  there are some extra symmetries here. More precisely, the stabilizer in $\GL(X)$ of the character $\psi_B$ is the subgroup $\U(B)$ and 
$\Omega_{N(X),\psi_B}$ is naturally a representation of $\U(B) \times \U(V)$.  Now the embedding $j: X^c \hookrightarrow V$ induces an isomoprhism
\[   \begin{CD} 
 \U(B)  @>\iota>> \U(B^c) @>{\rm Ad}(j)>>  \U(j(B^c))   \end{CD} \]
 where ${\rm Ad}(j)(h) = j \circ h \circ j^{-1}$. This gives a natural diagonal embedding
 \[    \begin{CD}
\Delta: \U(j(B^c))    @>({\rm Ad}(j) \circ \iota)^{-1} \times {\rm id}>> \U(B) \times \U(j(B^c))   @>>> \U(B)  \times \U(V). \end{CD} \]
 Keeping track of the $\U(B)$-action in the proof of the above proposition, one sees that as a $\U(B) \times \U(V)$-module, 
\[  \Omega_{N(X) ,\psi_B} \cong  
    {\rm ind}_{\U(j(B^c))^{\Delta}  \times \U(j(B^c)^{\perp})}^{\U(B) \times \U(V)}    (\chi_V^{-2} \chi_W \circ i^{-1} \circ {\det}_{j(B^c)} )\boxtimes  (\chi_W \circ i^{-1} \circ  {\det}_{j(B^c)^{\perp}} ).  \]
 Here, we have made use of the facts that
 \[  \chi_V \circ {\det}_X  = \chi_V^2 \circ i^{-1} \circ {\det}_X \quad \text{as characters of $\U(B)$} ,\]
 and  
 \[   (\chi_V \circ {\det}_X ) \circ  ({\rm Ad}(j) \circ \iota)^{-1} = \chi_V^{-1} \circ {\det}_{j(X^c)} \quad \text{ as characters of $\U(j(B^c))$}, \]
 which follows from (\ref{E:det}). 
 Hence we obtain:
\vskip 5pt

\begin{Thm}  \label{T:SFJ}
Fix a nondegenerate Hermitian form $B$ on $X$.
Given $\pi \in {\rm Irr}(\U(V))$ and $\sigma \in {\rm Irr}(\U(B))$, there is an isomorphism of vector spaces
\[
\Hom_{\U(B) \ltimes N(X)}(\Theta(\pi),   \sigma \boxtimes \psi_B)  \cong \]
\[   \Hom_{\U(j(B^c)) \times \U(j(B^c)^{\perp})} \left( \pi^{\vee},  \sigma \circ ({\rm Ad}(j) \circ \iota)^{-1}  \cdot  (\chi_V^2 \chi_W^{-1} \circ i^{-1}  \circ {\det}_{j(X^c)})\boxtimes (\chi_W^{-1} \circ i^{-1} \circ {\det}_{j(X^c)^{\perp}}) \right). \]
\end{Thm}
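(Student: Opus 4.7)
The statement is essentially a formal consequence of the $\U(B) \times \U(V)$-module identification of $\Omega_{N(X), \psi_B}$ recorded in the paragraph preceding the theorem, combined with Frobenius reciprocity and smooth duality. My plan is to proceed in three steps.

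First, the defining adjunction for the big theta lift $\Theta(\pi)$ together with the universal property of the $(N(X),\psi_B)$-twisted Jacquet module gives
\[
\Hom_{\U(B) \ltimes N(X)}(\Theta(\pi), \sigma \boxtimes \psi_B) \cong \Hom_{\U(V) \times \U(B)}(\Omega_{N(X), \psi_B}, \pi \boxtimes \sigma).
\]

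Next, I would substitute the compact--induction description of $\Omega_{N(X), \psi_B}$ displayed just above the theorem and apply Frobenius reciprocity for compact induction. Unwinding the diagonal embedding $\Delta$, which identifies $\U(j(B^c))$ with its image in $\U(B)$ via $({\rm Ad}(j) \circ \iota)^{-1}$, shows that the pullback of $\pi \boxtimes \sigma$ to $\U(j(B^c))^{\Delta} \times \U(j(B^c)^{\perp})$ is the tensor product of $\sigma \circ ({\rm Ad}(j) \circ \iota)^{-1}$ (acting only through the $\U(B)$-factor) with the restriction of $\pi$ to $\U(j(B^c)) \times \U(j(B^c)^\perp) \subset \U(V)$. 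Frobenius reciprocity then produces
\[
\Hom_{\U(j(B^c)) \times \U(j(B^c)^\perp)}\!\Bigl((\chi_V^{-2}\chi_W \circ i^{-1} \circ {\det}_{j(B^c)}) \boxtimes (\chi_W \circ i^{-1} \circ {\det}_{j(B^c)^\perp}),\; (\sigma \circ ({\rm Ad}(j)\circ\iota)^{-1}) \otimes \pi\Bigr).
\]

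Finally, I would apply tensor--hom adjunction to move the factor $\sigma \circ ({\rm Ad}(j) \circ \iota)^{-1}$ from the target to the source of $\Hom$ (valid because the irreducible $\sigma$ is admissible, so $\sigma^{\vee\vee} \cong \sigma$), and then apply the contragredient involution $\Hom_G(A, B) \cong \Hom_G(B^\vee, A^\vee)$ to interchange the roles of $\pi$ and the character data. Under the contragredient, $\chi_V^{-2}\chi_W$ passes to $\chi_V^2 \chi_W^{-1}$, $\chi_W$ passes to $\chi_W^{-1}$, and $\sigma \circ ({\rm Ad}(j) \circ \iota)^{-1}$ reappears (rather than its dual) on the target side, yielding exactly the form stated in the theorem.

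The only genuine difficulty is careful bookkeeping of the character twists and of the position of $\sigma$ versus $\sigma^\vee$ through these successive adjunctions; once the tensor--hom and contragredient identities are applied correctly, no further input beyond the preceding proposition is needed.
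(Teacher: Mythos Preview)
Your approach is essentially the same as the paper's: the theorem is deduced formally from the $\U(B)\times\U(V)$-module description of $\Omega_{N(X),\psi_B}$ together with duality and Frobenius reciprocity, exactly as you outline.

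One technical caveat on your ordering: the identity you invoke in step~2, namely $\Hom_G({\rm ind}_H^G\tau,\rho)\cong\Hom_H(\tau,\rho|_H)$, is not a valid adjunction for a general closed subgroup $H$ (compact induction is left adjoint to restriction only when $H$ is open). The paper avoids this by dualizing first on the big group $\U(B)\times\U(V)$, where $\pi\boxtimes\sigma$ is genuinely admissible, so that $({\rm ind}_K^G\nu)^\vee\cong{\rm Ind}_K^G\nu^{-1}$ and ordinary Frobenius reciprocity for ${\rm Ind}$ applies; one then lands directly on $\Hom_K(\pi^\vee\boxtimes\sigma^\vee|_K,\nu^{-1})$, and a single tensor--hom move (using admissibility of $\sigma$) yields the stated form. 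Your route reaches the same endpoint, but the intermediate step~2 as written is not independently justified, since $\pi|_{\U(j(B^c))\times\U(j(B^c)^\perp)}$ need not be admissible. Reordering to ``dualize, then Frobenius, then tensor--hom'' fixes this with no change in substance.
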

\vskip 5pt

We will especially be interested in the case when $\sigma = \mu \circ  i^{-1} \circ {\det}_X$ is a character of $\U(B)$, with $\mu$ a character of $E^{\times}/F^{\times}$. When pulled back to a character of $\U(j(B^c))$ via $({\rm Ad(j)} \circ \iota)^{-1}$, we have
\[   \sigma \circ ({\rm Ad}(j) \circ \iota)^{-1} =  \mu^{-1} \circ i^{-1} \circ  {\det}_{j(X^c)}. \]
We record the following corollary of Theorem \ref{T:SFJ}:
\vskip 5pt

\begin{Cor}  \label{C:SFJ}
In the context of Theorem \ref{T:SFJ},
 \[
\Hom_{\U(B) \ltimes N(X)}(\Theta(\pi),   \mu \circ i^{-1} \circ {\det}_X \boxtimes \psi_B)  \cong \]
\[ \Hom_{\U(j(B^c)) \times \U(j(B^c)^{\perp})} \left( \pi^{\vee},   (\mu^{-1} \cdot \chi_V^2 \chi_W^{-1} \circ i^{-1}  \circ {\det}_{j(X^c)})\boxtimes (\chi_W^{-1} \circ i^{-1} \circ {\det}_{j(X^c)^{\perp}}) \right). \]
\end{Cor}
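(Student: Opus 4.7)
The plan is to deduce Corollary \ref{C:SFJ} from Theorem \ref{T:SFJ} by substituting $\sigma = \mu \circ i^{-1} \circ {\det}_X$ and then explicitly computing the pullback character $\sigma \circ (\mathrm{Ad}(j) \circ \iota)^{-1}$ appearing as the first tensor factor on the right-hand side. The second tensor factor and the inner $\chi_V^2 \chi_W^{-1}$-twist of the first factor are unaffected by this substitution, so the entire content of the corollary is the identity
\[
(\mu \circ i^{-1} \circ {\det}_X) \circ (\mathrm{Ad}(j) \circ \iota)^{-1} = \mu^{-1} \circ i^{-1} \circ {\det}_{j(X^c)}
\]
as characters of $\U(j(B^c))$.

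To establish this identity I would unwind the definitions as follows. Fix $h \in \U(j(B^c))$ and set $g := (\mathrm{Ad}(j) \circ \iota)^{-1}(h) \in \U(B) \subset \GL(X)$. Then $\iota(g) = j^{-1} \circ h \circ j$ as elements of $\U(B^c) \subset \GL(X^c)$, and the key identity (\ref{E:det}) gives
\[
{\det}_X(g) = \bigl({\det}_{X^c}(\iota(g))\bigr)^c = {\det}_{X^c}(j^{-1} h j)^c = {\det}_{j(X^c)}(h)^c,
\]
where the last equality uses that $j : X^c \to j(X^c)$ is an $E$-linear isomorphism.

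Next I would use that ${\det}_X(g) \in E_1$ (since $g$ preserves the Hermitian form $B$), so that conjugation acts by inversion on it, and that the isomorphism $i : E^\times/F^\times \to E_1$ intertwines $c$ with inversion. Consequently
\[
i^{-1}\bigl({\det}_{j(X^c)}(h)^c\bigr) = i^{-1}\bigl({\det}_{j(X^c)}(h)\bigr)^{-1}
\]
in $E^\times/F^\times$, and applying $\mu$ yields the required formula. Substituting back into Theorem \ref{T:SFJ} and merging the two characters of $\U(j(B^c))$ into a single $\mu^{-1} \cdot \chi_V^2 \chi_W^{-1} \circ i^{-1} \circ {\det}_{j(X^c)}$ gives the statement of the corollary.

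The one place to exercise care — and where the misprints in \cite{G} arose — is the bookkeeping for how $\iota$ twists determinants by the nontrivial Galois action and how this interacts with $i$. Beyond this routine but delicate verification, no new input is needed: the entire argument is a direct specialization of Theorem \ref{T:SFJ}.
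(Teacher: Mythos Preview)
Your proposal is correct and matches the paper's approach exactly: the paper records the identity $\sigma \circ (\mathrm{Ad}(j) \circ \iota)^{-1} = \mu^{-1} \circ i^{-1} \circ {\det}_{j(X^c)}$ immediately before the corollary (citing (\ref{E:det})) and then substitutes into Theorem~\ref{T:SFJ}, while you supply the routine determinant-and-conjugation unwinding that justifies this identity in detail. There is nothing to add.
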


\vskip 5pt

As an example, when $\dim V = \dim W$ is even, we may take $\chi_V = \chi_W = \mu$ to be trivial. Then we see that
\[  \text{$\pi^{\vee}$ has nonzero linear period} \Longleftrightarrow \text{$\Theta(\pi)$ has nonzero Shalika period}. \]

\vskip 10pt

\section{\bf Unitary Shalika Periods} \label{S:USP}
In this section, we recall the results of Beuzart-Plessis and Wan \cite{BW2}, \cite{BW3} on the multiplicities of unitary Shalika period.  
\vskip 5pt

\subsection{\bf Local Langlands correspondence}
In order to state their results, we need the local Langlands correspondence for unitary groups. Here we give a quick review of it. For each given dimension $n$, we denote the two distinct $\epsilon$-Hermition spaces by $V^+$ and $V^-$, according to their signs (see \S \ref{SS:nonarch}). Then there is a finite to one surjective map
\[
	\mathcal{L}: {\rm Irr}(\U(V^+)) \sqcup {\rm Irr}(\U(V^-)) \longrightarrow \Phi(n),
\]
where $\Phi(n)$ is the set of L-parameters for unitary groups of $n$-variables. This map $\mathcal{L}$ preserves various properties such as discreteness and temperedness. 
We recall that an element $\phi \in \Phi(n)$ is an isomorphism class of conjugate-dual representation of the Weil-Deligne group $WD_E$ of $E$ of sign $(-1)^{n-1}$. 
Such a $\phi$ is discrete if and only if it is a multiplicity-free direct sum 
\begin{equation} \label{E:discrete}  \phi = \bigoplus_{i\in I} \phi_i  \end{equation}
of irreducible representations $\phi_i$, such that each $\phi_i$ is conjugate-dual of sign $(-1)^{n-1}$ as well.
For $\phi\in \Phi(n)$, we put $\Pi_\phi=\mathcal{L}^{-1}(\phi)$: this is the local L-packet associated to the L-parameter $\phi$.
\vskip 5pt

  Fix a Whittaker datum $\mathscr{W}$ of $\U(V^+)$. Then relative to this datum, there is a canonical bijection
\[
	\mathcal{J}_{\mathscr{W}}: \Pi_\phi\longrightarrow {\rm Irr}(\mathcal{S}_\phi),
\]
where $\mathcal{S}_\phi$ is the component group associated to the L-parameter $\phi$. 
We shall write $\pi(\phi,\eta)$ for the irreducible representation in $\Pi_\phi$ corresponding to $\eta\in{\rm Irr}(\mathcal{S}_\phi)$. Let $z_\phi$ be the image of $-1\in\GL_n(\mathbb{C})$ in $\mathcal{S}_\phi$. We highlight the property that $\pi(\phi,\eta)$ is a representation of $\U(V^\varepsilon)$ if and only if $\eta(z_\phi)=\varepsilon$.
\vskip 5pt
As an example, consider the case when $\phi$ is a discrete L-parameter with multiplicity-free decomposition as in (\ref{E:discrete}). Then
\[  \mathcal{S}_{\phi} = \prod_{i\in I} \Z/2\Z \cdot a_i. \]
In other words, $\mathcal{S}_{\phi}$ is an elementary abelian 2-group with a canonical basis. The element $z_{\phi}$ is simply the element $\sum_{i \in I} a_i$, i.e. it generates the diagonally embedded $\Z/2\Z$. For an element $a=a_{j_1}+\cdots +a_{j_r}$ in $\mathcal{S}_\phi$, we put
\[
	\phi^a= \phi_{j_1} + \cdots + \phi_{j_r}.
\]
\vskip 5pt

\subsection{\bf Multiplicity of unitary Shalika periods} \label{SS:T-USP}
Now we are able to state the results of Beuzart-Plessis and Wan. We retain the notations in \S \ref{SS:USP}. 
Let $W$ be the maximally split skew-Hermitian space of even dimension $2n$.
Let $\mathscr{W}=\mathscr{W}_\psi$ be the Whittaker datum of $\U(W)$ determined by the $N_{E/F}(E^\times)$-orbit of the additive character $\psi$. We shall use the local Langlands correspondence for $\U(W)$ relative to the Whittaker datum $\mathscr{W}$. For a character $\mu$ of $E_1$, we use $\widetilde{\mu}$ to denote the pull-back of $\mu$ along the map $i:E^\times/F^\times\rightarrow E_1$, i.e. 
\[  \widetilde{\mu} = \mu \circ i. \]
\vskip 5pt

The following two theorems due to Beuzart-Plessis-Wan \cite{BW2}, \cite{BW3} determine the multiplicity of $(B,\mu)$-Shalika periods precisely for discrete series representations.
\begin{Thm}\label{C:USP-1}
Let $\phi$ be a discrete L-parameter of $\U(W)$, and $\mu$ a character of $E_1$. If there exists some $\pi\in\Pi_\phi(\U(W))$ such that $Sha_{B}(\pi, \mu)\neq 0$, then as a representation of $WD_E$, $\phi$ takes value in $\GSp_{2n}(\C)$ with similitude factor $\widetilde{\mu}$, i.e. $\lambda\circ\phi=\widetilde{\mu}$, where
\[
	\lambda: \GSp_{2n}(\C)\longrightarrow \C^\times
\]
is the similitude map.
\end{Thm}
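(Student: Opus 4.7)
The plan is to follow the local relative trace formula approach of Beuzart-Plessis and Wan \cite{BW2, BW3}, which produces an explicit multiplicity formula for the Shalika period. One sets up a distribution on $\U(W)$ attached to the Shalika subgroup $S_B$ and its character $(\mu \circ {\det}_X) \boxtimes \psi_B$. Its spectral expansion, evaluated on suitably regularized discrete matrix coefficients, extracts the multiplicities $m(\pi,\mu) = \dim Sha_B(\pi,\mu)$ for $\pi \in \Pi_\phi$, while its geometric expansion is a sum of orbital integrals over $S_B$-regular semisimple orbits in $\U(W)$.

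The crucial step is to compare this geometric side, via a smooth matching of test functions together with a fundamental lemma, with the analogous geometric expansion attached to the Jacquet-Shalika zeta integral on $\GL_{2n}(E)$ representing the exterior-square $L$-function. After this comparison, the spectral content of the transferred distribution is controlled, through the local Langlands correspondence and base change, by the L-parameter $\phi$ of $\pi$ regarded as a $\GL_{2n}(\C)$-valued representation of $WD_E$. By Theorem \ref{T:split-local}, the transferred distribution is supported only on parameters $\phi$ for which $L(s, \phi \otimes \widetilde{\mu}^{-1}, \wedge^2)$ has a pole at $s=0$; equivalently, $\phi \otimes \widetilde{\mu}^{-1}$ preserves a non-degenerate symplectic form. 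This is exactly the condition that $\phi$ factors through $\GSp_{2n}(\C)$ with similitude character $\widetilde{\mu}$, so that $\lambda \circ \phi = \widetilde{\mu}$.

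The main obstacle is the harmonic-analytic foundation of the trace formula: convergence of the relevant distributions on discrete matrix coefficients, smooth transfer between $\U(W)$ and $\GL_{2n}(E)$ in the presence of the Shalika character $(\mu \circ {\det}_X) \boxtimes \psi_B$, and the corresponding fundamental lemma. These ingredients form the bulk of the work in \cite{BW2, BW3}; once they are established, the parameter-theoretic conclusion that $\phi$ must be valued in $\GSp_{2n}(\C)$ with similitude $\widetilde{\mu}$ is a relatively short argument invoking only the local Langlands correspondence and the pole interpretation of the exterior-square $L$-factor.
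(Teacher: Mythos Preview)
The paper does not prove this theorem at all: it is quoted from the work of Beuzart-Plessis and Wan \cite{BW2, BW3} and used as a black-box input (see the sentence preceding the statement and the summary in the introduction). So there is no ``paper's own proof'' to compare your attempt against.

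Your proposal is a reasonable high-level summary of what one expects the argument in \cite{BW2, BW3} to look like, extrapolating from \cite{BW1}. But as written it is an outline, not a proof. You yourself identify the actual content---convergence of the local trace formula distributions, smooth transfer, and the fundamental lemma for this setting---as ``obstacles'' and simply assert that they ``form the bulk of the work'' in the cited references. Everything you write after that (the exterior-square pole criterion, the reformulation as $\phi$ landing in $\GSp_{2n}$ with similitude $\widetilde{\mu}$) is the easy deduction, not the proof. In other words, your proposal correctly locates where the difficulty lies and then defers it entirely to \cite{BW2, BW3}; that is exactly what the paper does too, but the paper is honest about presenting the result as a citation rather than as a proof.

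If your goal is to supply an independent argument, you would need to either carry out the harmonic analysis you list as obstacles, or find a genuinely different route (e.g., via theta correspondence alone, which the paper uses in the reverse direction to transfer Shalika periods to Friedberg--Jacquet periods). As it stands, what you have written is a citation dressed up as a proof sketch.
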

\vskip 5pt

Assume now the discrete L-parameter $\phi$ satisfies the condition above. Given such an L-parameter $\phi$, we may decompose $\phi$ into a ``symplectic part'' and a ``non-symplectic part'', as
\[
	\phi=\sum_{i\in I_\phi}\phi_i + \sum_{j\in J_\phi}\left(\phi_j + \phi_j^\vee\widetilde{\mu}\right),
\]
where $\phi_i$, $\phi_j$ and $\phi_j^\vee$ are all irreducible conjugate symplectic representations of $WD_E$. For each constituent $\phi_i$ in the ``symplectic part'' (indexed by $I_{\phi}$), we require that $\phi_i$ takes value in a similitude symplectic group of the  appropriate dimension with similitude factor $\widetilde{\mu}$. For each constituent $\phi_j$ in the ``non-symplectic part'' (indexed by $J_{\phi}$), we note that $\phi_j\not\cong\phi_j^\vee\widetilde{\mu}$ as a consequence of the discreteness of $\phi$. Let 
\[
	\mathcal{S}_\phi^\Delta = \sum_{i\in I_\phi}\Z/2\Z \, a_i + \sum_{j\in J_\phi}\Z/2\Z \, \left(b_j + b_j^*\right)\subset\mathcal{S}_\phi,
\]
where $a_i$, $b_j$ and $b_j^*$ are the basis elements in $\mathcal{S}_\phi$ corresponding to $\phi_i$, $\phi_j$ and $\phi_j^\vee$ respectively. 

\begin{Thm}\label{C:USP-2}
Suppose that the L-parameter $\phi$ is discrete and takes value in $\GSp_{2n}(\C)$ with similitude factor $\widetilde{\mu}$. Let $\pi=\pi(\phi,\eta)$ be an irreducible discrete series representation of $\U(W)$.
\begin{enumerate}
\item Suppose that $I_\phi\neq\varnothing$. Then $\pi$ has non-zero $(B,\mu)$-Shalika periods if and only if
\[
	\eta~\Big|_{\mathcal{S}_\phi^\Delta}=1.
\] 
When this condition holds, we have
\[
	\dim Sha_{B}(\pi, \mu)=2^{|I_\phi|-1}.
\]
In particular, the dimension of $Sha_{B}(\pi, \mu)$ is independent of the choice of $B$ in this case.
\item Suppose that $I_\phi=\varnothing$. Then $\pi$ has non-zero $(B,\mu)$-Shalika periods if and only if
\[
	\eta~\Big|_{\mathcal{S}_\phi^\Delta}=1, \quad \text{and} \quad \eta\left(\sum_{j\in J_\phi}b_j\right)=\epsilon(B).
\]
Here $\epsilon(B)$ is the sign of the Hermitian space $(X,B)$. When these conditions hold, we have 
\[
	\dim Sha_{B}(\pi, \mu)=1.
\]
\end{enumerate}
\end{Thm}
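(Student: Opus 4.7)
My plan is to follow the local relative trace formula approach of Beuzart-Plessis and Wan \cite{BW2, BW3}, which extends their earlier work on the split case \cite{BW1}. The argument naturally breaks into a geometric multiplicity formula, an endoscopic reduction, and a case analysis on $I_\phi$.

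The first step is to establish a geometric multiplicity formula of the shape
\[
\dim Sha_B(\pi,\mu) \;=\; \sum_T |W(\U(X,B),T)|^{-1} \int_{T(F)} c_\pi(t)\,\widetilde{\mu}(t)^{-1}\,D^{\U(W)}(t)^{1/2}\,dt,
\]
where $T$ ranges over conjugacy classes of maximal elliptic tori in $\U(X,B)$, $c_\pi$ is a suitable normalization of the restriction of the Harish-Chandra character of $\pi$ to the Shalika subgroup, and $D^{\U(W)}$ is the Weyl discriminant. One derives this from a local relative trace formula for $(\U(W),S_B)$ with test functions built from truncated matrix coefficients of $\pi$; Arthur-style truncation provides convergence, and Plancherel theory identifies the contribution at the identity with the formal degree of $\pi$.

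The second step is to apply the endoscopic character identities for the packet $\Pi_\phi$. By Theorem \ref{C:USP-1}, $\lambda\circ\phi=\widetilde\mu$, so $\phi$ factors through a twisted endoscopic group attached to the decomposition $\phi=\sum_{i\in I_\phi}\phi_i+\sum_{j\in J_\phi}(\phi_j+\phi_j^\vee\widetilde\mu)$. Rewriting the integrand of Step 1 in terms of stable characters on this endoscopic group converts the $\eta$-multiplicity into a pairing of $\eta$ with a distinguished character of $\mathcal{S}_\phi$, and the subgroup annihilated by this character is exactly $\mathcal{S}_\phi^\Delta$: it is the stabilizer of the relevant set of Shalika orbits. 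For $I_\phi\neq\varnothing$, each $\phi_i$ introduces an independent $\Z/2\Z$-symmetry on the torus side, so summing produces the factor $2^{|I_\phi|-1}$ (the $-1$ coming from the constraint imposed by $z_\phi$) and washes out the dependence on $\epsilon(B)$. For $I_\phi=\varnothing$, no such averaging is available; instead the residual torus-side sign is the Kottwitz transfer sign $\epsilon(B)$, producing the condition $\eta(\sum_{j\in J_\phi}b_j)=\epsilon(B)$ and multiplicity one.

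The main obstacle is Step 1: establishing the geometric multiplicity formula in the unitary Shalika setting is substantially more involved than in the split case, requiring a delicate weighted orbital integral analysis, unitary-specific Plancherel computations, and careful handling of the sign $\epsilon(B)$ through transfer factors. Once this formula is in hand, Steps 2 and 3 are standard applications of the endoscopic character relations, though they still demand careful bookkeeping on the component group side.
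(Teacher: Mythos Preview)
The paper does not give its own proof of this theorem: it is stated in \S\ref{S:USP} as a result of Beuzart-Plessis and Wan \cite{BW2, BW3} and is imported as external input for the subsequent sections (see the sentence preceding Theorem~\ref{C:USP-1}: ``The following two theorems due to Beuzart-Plessis-Wan\ldots''). So there is nothing in the paper to compare your argument against.

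Your sketch follows the strategy of the cited references, namely a local relative trace formula on $(\U(W),S_B)$ yielding a geometric multiplicity formula, followed by endoscopic character relations to rewrite it as a pairing on $\mathcal{S}_\phi$. That is the correct architecture. That said, what you have written is a high-level outline rather than a proof: the precise form of the geometric expansion, the exact definition and regularity of $c_\pi$, the handling of the germ expansion near singular points of the tori, and the identification of the distinguished character with the restriction-to-$\mathcal{S}_\phi^\Delta$ condition are all substantial and are where the actual work in \cite{BW2, BW3} lies. In particular, your explanation of why the exponent is $|I_\phi|-1$ (``the $-1$ coming from the constraint imposed by $z_\phi$'') and why $\epsilon(B)$ disappears when $I_\phi\neq\varnothing$ is heuristic; making this rigorous requires the explicit spectral comparison carried out in those papers. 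If your goal is simply to indicate provenance, a citation suffices; if you intend an independent proof, each of the three steps would need to be filled in with the full analysis.
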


\vskip 10pt

\section{\bf Unitary Friedberg-Jacquet Periods} \label{S:UFJ}
In this section, we consider the unitary FJ period. We retain the notations in \S \ref{SS:USP} and \S \ref{SS:UFJ}, as well as those of the previous section. Combining Beuzart-Plessis-Wan's results on unitary Shalika periods and Corollary \ref{C:SFJ}, we are able to completely determine the multiplicity of unitary FJ periods for discrete series representations. 
\vskip 5pt

Let $V$ be an $N$-dimensional Hermitian space, and $V^+$ the Hermitian space with the same dimension and sign $+1$. Let $\mathscr{W}'=\mathscr{W}_{\psi^E}$ be the Whittaker datum of $\U(V^+)$ determined by the $N_{E/F}(E^\times)$-orbit of the additive character
\[
	\psi^E:=\psi\left(\frac{1}{2}Tr_{E/F}(\delta\cdot~)\right)
\]
of $E/F$. We shall use the local Langlands correspondence for $\U(V)$ relative to $\mathscr{W}'$ to parametrize irreducible discrete series representations of $\U(V)$. Let $V_0$ be an $m$-dimensional Hermitian subspace of $V$. Without loss of generality, we assume that $2m\leq N$, and put $\ell=N-2m$. 
\vskip 5pt

\subsection{Even dimensional case}
We first consider the case that $\ell=0$. In particular, $\dim V=2m$ is even in this case. 
\begin{Thm}\label{T:UFJ-1}
Let $\phi$ be a discrete L-parameter of $\U(V)$, and $(\mu_1,\mu_2)$ a pair of characters of $E_1$. If there exists some $\pi\in\Pi_\phi(\U(V))$ such that $Lin_{V_0}(\pi, \mu_1\boxtimes\mu_2)\neq 0$, then the following holds:
\begin{enumerate}
  \item We have a dichotomy
    \[
      \epsilon(V)=\epsilon\left(\frac{1}{2},\phi\cdot\widetilde{\mu}_2^{-1},\psi_2^E\right).
    \]
  \vskip 5pt
  
  \item As a representation of $WD_E$, $\phi$ takes value in $\GSp_{2m}(\C)$ with similitude factor $\widetilde{\mu}_1\widetilde{\mu}_2$.
\end{enumerate}
\end{Thm}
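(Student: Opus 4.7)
The strategy is to transfer the unitary Friedberg-Jacquet period problem on $\U(V)$ to a unitary Shalika period problem on a skew-Hermitian group $\U(W)$ via the theta correspondence identity of Corollary \ref{C:SFJ}, and then invoke the Beuzart-Plessis--Wan classification (Theorem \ref{C:USP-1}). Since $\pi$ is discrete series and we will be working at equal rank, the theta lift $\Theta(\pi)$, when nonzero, is itself a discrete series, so Theorem \ref{C:USP-1} will apply directly.

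Concretely, I would take $W$ to be the maximally split skew-Hermitian space of dimension $2m$ with Witt decomposition $W = X \oplus Y$, and pick a Hermitian form $B$ on $X$ with $B^c \cong V_0$. Fix the splitting characters $\chi_V = 1$ and $\chi_W = \widetilde{\mu}_2$; the parity conditions $\chi_V|_{F^\times} = \chi_W|_{F^\times} = \omega_{E/F}^{2m} = 1$ hold automatically. Set $\mu = \widetilde{\mu}_1 \widetilde{\mu}_2^{-1}$, a character of $E^\times/F^\times$. A direct inspection of the right-hand side of Corollary \ref{C:SFJ} shows that with these choices the character appearing on $\U(j(B^c)) \times \U(j(B^c)^\perp) = \U(V_0) \times \U(V_0^\perp)$ is exactly $\mu_1^{-1} \circ \det_{V_0} \boxtimes \mu_2^{-1} \circ \det_{V_0^\perp}$. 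Taking contragredients gives the identification
\[
  Lin_{V_0}(\pi, \mu_1 \boxtimes \mu_2) \;\cong\; Sha_B\bigl(\Theta(\pi),\, \mu \circ i^{-1}\bigr),
\]
where $\Theta$ is the theta lift $\U(V) \to \U(W)$ attached to $(\psi, \chi_V, \chi_W)$.

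Assuming the left side is nonzero, we get $\Theta(\pi) \neq 0$ and the Shalika period of $\Theta(\pi)$ is nonvanishing. Theorem \ref{C:USP-1} then forces the L-parameter $\phi^W$ of $\Theta(\pi)$ to take values in $\GSp_{2m}(\C)$ with similitude factor $\widetilde{\mu}_1 \widetilde{\mu}_2^{-1}$. The equal-rank theta correspondence for unitary groups shifts parameters by $\phi^W = \phi \cdot \chi_V \chi_W^{-1} = \phi \cdot \widetilde{\mu}_2^{-1}$; since twisting $\phi$ by a character $\chi$ multiplies its similitude factor by $\chi^2$, we conclude that $\phi$ itself takes values in $\GSp_{2m}(\C)$ with similitude factor $(\widetilde{\mu}_1 \widetilde{\mu}_2^{-1}) \cdot \widetilde{\mu}_2^{2} = \widetilde{\mu}_1 \widetilde{\mu}_2$. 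This yields statement (2).

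For statement (1), the condition that $\Theta(\pi)$ is nonzero onto the fixed split $\U(W)$ is controlled by the Gan-Ichino local dichotomy at equal rank, which selects the sign of $V$ by a local epsilon factor attached to $\phi$, $(\chi_V,\chi_W)$ and the additive character. Translating the standard formulation to our normalization $(\chi_V, \chi_W) = (1, \widetilde{\mu}_2)$ and the character $\psi^E(x) = \psi(\tfrac{1}{2} Tr_{E/F}(\delta x))$ should produce exactly
\[
  \epsilon(V) \;=\; \epsilon\bigl(1/2,\, \phi \cdot \widetilde{\mu}_2^{-1},\, \psi_2^E\bigr).
\]
The main obstacle will be bookkeeping: pinning down the direction of the parameter twist under the equal-rank theta lift, verifying the matching of characters in Corollary \ref{C:SFJ} (including the interaction of $\chi_V^2$ and $\iota$ with the contragredient), and confirming that the Gan-Ichino dichotomy epsilon factor — which is normally phrased for $\psi^E$ with no internal twists — takes the form displayed above for our splitting data, in particular explaining the appearance of $\psi_2^E$ rather than $\psi^E$. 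Once those normalizations are fixed, the proof is a chain of routine rewrites.
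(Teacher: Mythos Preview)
Your proposal is correct and follows essentially the same route as the paper: apply Corollary \ref{C:SFJ} (after passing to the contragredient via unitarity) with $\chi_W=\widetilde{\mu}_2$ to convert the FJ period on $\U(V)$ into a Shalika period for $\Theta(\pi)$ on the split $\U(W)$, then read off (1) from the Gan--Ichino equal-rank dichotomy \cite[\S 4.4 (P1)]{GI2} and (2) from Theorem \ref{C:USP-1} together with the parameter shift $\theta_{2m}(\phi)=\phi\cdot\chi_V\chi_W^{-1}$. The only cosmetic differences are that the paper keeps $\chi_V$ generic (yielding $\mu'=\mu_1\mu_2^{-1}\cdot\chi_V|_{E_1}$) rather than setting $\chi_V=1$, and it simply cites \cite{GI2} for the precise epsilon-factor normalization you flag as bookkeeping.
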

\vskip 5pt

\begin{proof}
Let $\pi=\pi(\phi,\eta)\in\Pi_\phi(\U(V))$ be such that $Lin_{V_0}(\pi, \mu_1\boxtimes\mu_2)\neq 0$. Since $\pi$ is a discrete series representation of $\U(V)$, it is also unitary. By taking complex conjugation, we get
\[
	Lin_{V_0}\left(\pi^\vee, \mu_1^{-1} \boxtimes \mu_2^{-1}\right)\neq 0.
\]
Let $W$ be the maximally split $2m$-dimensional skew-Hermitian space over $E$, and set $\chi_W=\widetilde{\mu}_2$. We also fix a character $\chi_V$ of $E^\times$ satisfying the condition described in \S \ref{SS:theta-splitting}. Consider the theta correspondence between $\U(V)$ and $\U(W)$ with respect to the additive character $\psi$ and the splitting characters $(\chi_V,\chi_W)$. Then Corollary \ref{C:SFJ} asserts that
\[
	Sha_{V_0^c}\left(\Theta(\pi),\mu'\right)\neq 0,
\]
where $\mu'=\mu_1\mu_2^{-1}\cdot\chi_V|_{E_1}$ is a character of $E_1$. 
In particular, $\Theta(\pi)\neq 0$. It then follows from \cite[\S 4.4 (P1)]{GI2} that our first assertion holds. Indeed, one also knows that $\Theta(\pi)$ is an irreducible discrete series representation of $\U(W)$ with L-parameter
\[
  \theta_{2m}(\phi)=\phi\otimes\widetilde{\mu}_2^{-1}\chi_V.
\]
By Theorem \ref{C:USP-1}, $\theta_{2m}(\phi)$ takes value in $\GSp_{2m}(\C)$ with similitude factor $\widetilde{\mu}'$. This implies our second assertion.
\end{proof}

\vskip 5pt

\begin{Rmk}
\begin{enumerate}
  \item Indeed, to prove the assertion (1) in this theorem, we only need to assume that $\phi$ is tempered. Moreover, if Theorem \ref{C:USP-1} is available for tempered L-parameters, then we can also prove the assertion (2) for tempered L-parameters.
  \vskip 5pt

  \item By considering the theta correspondence between $\SO_{2n+1} \times \Mp_{2n}$, we can also prove an analog of Theorem \ref{T:UFJ-1}(1)  for the $\left(\SO_{2n+1} , \SO_n \times\SO_{n+1}\right)$-period problem.
\end{enumerate}
\end{Rmk}

\vskip 10pt

Now assume that the two conditions in the theorem hold for the discrete L-parameter $\phi$. As in \S \ref{SS:T-USP}, we decompose $\phi$ into a ``symplectic part'' and a ``non-symplectic part'' as
\[
	\phi=\sum_{i\in I_\phi}\phi_i + \sum_{j\in J_\phi}\left(\phi_j + \phi_j^\vee\widetilde{\mu}\right),
\]
where $\mu=\mu_1\mu_2$. Similarly, we let
\[
	\mathcal{S}_{\phi}^\Delta = \sum_{i\in I_\phi}\Z/2\Z \, a_i + \sum_{j\in J_\phi}\Z/2\Z \, \left(b_j + b_j^*\right) \subset \mathcal{S}_{\phi},
\]
where $a_i$, $b_j$ and $b_j^*$ are the basis elements in $\mathcal{S}_{\phi}$ corresponding to $\phi_i$, $\phi_j$ and $\phi_j^\vee\widetilde{\mu}$ respectively. 
Let $\eta^\flat$  be the character of $\mathcal{S}_\phi$ defined by the formula
\[
	\eta^\flat(a)=\epsilon\left(\frac{1}{2},\phi^a \cdot \widetilde{\mu}_2^{-1}, \psi_2^E\right)
\]
for $a\in\mathcal{S}_\phi$.

\begin{Thm} \label{T:UFJ-2}
Suppose that the L-parameter $\phi$ is discrete and satisfies all conditions in Theorem \ref{T:UFJ-1}. Let $\pi=\pi(\phi,\eta)$ be an irreducible discrete series representation of $\U(V)$ belonging to the L-packet of $\phi$.
\begin{enumerate}
\item Suppose that $I_\phi\neq\varnothing$. Then $\pi$ has non-zero $(\mu_1,\mu_2)$-linear periods if and only if
\[
	\eta\cdot\eta^\flat~\Big|_{\mathcal{S}_\phi^\Delta}=1.
\] 
When this condition holds, we have
\[
	\dim Lin_{V_0}(\pi, \mu_1\boxtimes\mu_2)=2^{|I_\phi|-1}.
\]
In particular, the dimension of $Lin_{V_0}(\pi, \mu_1\boxtimes\mu_2)$ is independent of the choice of $V_0$ in this case.
\vskip 5pt

\item Suppose that $I_\phi=\varnothing$. Then $\pi$ has non-zero $(\mu_1,\mu_2)$-linear periods if and only if
\[
	\eta\cdot\eta^\flat~\Big|_{\mathcal{S}_\phi^\Delta}=1, \quad \text{and} \quad \eta\cdot\eta^\flat\left(\sum_{j\in J_\phi}b_j\right)=\epsilon(V_0).
\]
Here $\epsilon(V_0)$ is the sign of the Hermitian space $V_0$. When these conditions hold, we have 
\[
	\dim Lin_{V_0}(\pi, \mu_1\boxtimes\mu_2)=1.
\]
\end{enumerate}
\end{Thm}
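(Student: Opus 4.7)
The idea is to transfer the FJ period on $\U(V)$ to a Shalika period on $\U(W)$ via the equal-rank theta correspondence, and then apply Beuzart-Plessis and Wan's Theorem \ref{C:USP-2}. Continuing with the setup in the proof of Theorem \ref{T:UFJ-1}, I take $W$ to be the $2m$-dimensional maximally split skew-Hermitian space, set $\chi_W=\widetilde{\mu}_2$, and choose $\chi_V$ compatibly. I then pick a Hermitian form $B$ on $X$ such that $(X^c, B^c)\cong V_0$; such $B$ exists regardless of $\epsilon(V_0)$ since any Hermitian structure can be placed on $X^c$. Since $\mu_1,\mu_2$ are characters of the compact group $E_1$ and $\pi$ is unitary, combining Corollary \ref{C:SFJ} with complex conjugation (to pass between $\pi$ and $\pi^\vee$) yields
\[
  \dim Lin_{V_0}(\pi,\mu_1\boxtimes\mu_2)=\dim Sha_{B}\bigl(\Theta(\pi),\mu'\bigr)
\]
for a character $\mu'$ of $E_1$ explicitly determined by $(\mu_1,\mu_2,\chi_V)$.

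Next, the dichotomy condition on $\epsilon(V)$ from Theorem \ref{T:UFJ-1}(1) guarantees that $\Theta(\pi)$ is a nonzero irreducible discrete series representation of $\U(W)$, with L-parameter $\phi'=\phi\otimes\widetilde{\mu}_2^{-1}\chi_V$. Since twisting by a character preserves the symplectic/non-symplectic decomposition of $\phi$, there is a canonical isomorphism $\mathcal{S}_{\phi'}\cong\mathcal{S}_\phi$ identifying $I_{\phi'}$ with $I_\phi$, $J_{\phi'}$ with $J_\phi$, and the respective symplectic subgroups $\mathcal{S}_{\phi'}^\Delta$ and $\mathcal{S}_\phi^\Delta$. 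Writing $\Theta(\pi)=\pi(\phi',\eta')$, I will invoke the equal-rank unitary theta correspondence formula of Gan-Ichino to express
\[
  \eta'(a)=\eta(a)\cdot\epsilon\!\left(\tfrac{1}{2},\phi^a\cdot\widetilde{\mu}_2^{-1},\psi_2^E\right)=\eta(a)\cdot\eta^\flat(a)
\]
for every $a\in\mathcal{S}_\phi\cong\mathcal{S}_{\phi'}$.

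Finally, I apply Theorem \ref{C:USP-2} to $\Theta(\pi)=\pi(\phi',\eta')$. When $I_\phi\neq\varnothing$, that theorem gives $\dim Sha_B(\Theta(\pi),\mu')=2^{|I_\phi|-1}$ precisely when $\eta'|_{\mathcal{S}_{\phi'}^\Delta}=1$; substituting $\eta'=\eta\cdot\eta^\flat$ yields part (1). When $I_\phi=\varnothing$, the additional sign condition $\eta'(\sum_j b_j)=\epsilon(B)$ translates to $\eta\cdot\eta^\flat(\sum_j b_j)=\epsilon(V_0)$ once one observes that $\det B\in F^\times$, so $\epsilon(B)=\epsilon(B^c)=\epsilon(V_0)$; this yields part (2). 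The variation over $V_0$ is captured by varying the sign $\epsilon(B)$ in the choice of $B$.

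The main obstacle will be establishing the formula $\eta'=\eta\cdot\eta^\flat$ in the second step. This requires a careful invocation of the Gan-Ichino correspondence with the correct normalizations: matching the Whittaker data ($\mathscr{W}_\psi$ on $\U(W)$ versus $\mathscr{W}_{\psi^E}$ on $\U(V^+)$), the additive characters, and the splitting characters $(\chi_V,\chi_W)$ so that the epsilon factor produced is exactly the one defining $\eta^\flat$. All other ingredients---the transfer via Corollary \ref{C:SFJ}, the stability of the component group decomposition under character twists, and the translation of the conditions in Theorem \ref{C:USP-2}---are routine consequences of the setup.
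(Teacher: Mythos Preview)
Your proposal is correct and follows essentially the same route as the paper's proof: transfer the FJ period to a Shalika period via Corollary \ref{C:SFJ}, identify $\Theta(\pi)=\pi(\theta_{2m}(\phi),\eta\cdot\eta^\flat)$ using the Gan--Ichino result \cite[\S 4.4 (P1)]{GI2}, and then read off the answer from Theorem \ref{C:USP-2}. The paper's proof is terser, citing \cite{GI2} directly for the identity $\eta'=\eta\cdot\eta^\flat$ rather than spelling out the Whittaker-datum bookkeeping you flag as the main obstacle, and it writes the Shalika space as $Sha_{V_0^c}$ (i.e.\ with $B\cong V_0^c$), which matches your choice of $B$ with $(X^c,B^c)\cong V_0$.
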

\vskip 5pt

\begin{proof}
As in the proof of Theorem \ref{T:UFJ-1}, let $W$ be the $2m$-dimensional maximally split skew-Hermitian space over $E$, we consider the theta correspondence between $\U(V)$ and $\U(W)$ with respect to the additive character $\psi$ and characters $(\chi_V,\chi_W)$, where $\chi_W=\widetilde{\mu}_2$. With $\pi = \pi(\phi,\eta)$, we know by \cite[\S 4.4 (P1)]{GI2} that 
\[  \Theta(\pi) = \pi\left(\theta_{2m}(\phi), \eta \cdot \eta^{\flat}\right), \]
where $\theta_{2m}(\phi)=\phi\otimes\widetilde{\mu}_2^{-1}\chi_V$. Let $\mu'=\mu_1\mu_2^{-1}\cdot\chi_V|_{E_1}$ be a character of $E_1$. By Theorem \ref{C:USP-2} and the hypotheses of the theorem, we know
\[
	\dim Sha_{V_0^c}\left(\Theta(\pi),\mu'\right)
\]
precisely. The assertions of the theorem then follow from  Corollary \ref{C:SFJ}. 
\end{proof}

\vskip 5pt

\subsection{Odd dimensional case}
Next we consider another interesting case, namely the case where $\ell= N -2m = 1$. In particular, $\dim V=2m+1$ is odd in this case. The two theorems in this subsection are parallel with those in the previous subsection.
\begin{Thm}\label{T:UFJ-I}
Let $\phi$ be a discrete L-parameter of $\U(V)$, and $(\mu_1,\mu_2)$ a pair of characters of $E_1$. If there exists some $\pi\in\Pi_\phi(\U(V))$ such that $Lin_{V_0}(\pi, \mu_1\boxtimes\mu_2)\neq 0$ (with $V_0 \subset V$ of dimension $m$), then the following holds:
\begin{enumerate}
  \item The L-parameter $\phi$ contains $\widetilde{\mu}_2$ as a subrepresentation. Equivalently, the local L-function $L\left(s,\phi\cdot\widetilde{\mu}_2^{-1}\right)$ has a pole at $s=0$. 
  \vskip 5pt
  
  \item As a representation of $WD_E$, $\phi-\widetilde{\mu}_2$ takes value in $\GSp_{2m}(\C)$ with similitude factor $\widetilde{\mu}_1\widetilde{\mu}_2$.
\end{enumerate}
\end{Thm}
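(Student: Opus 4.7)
The plan is to parallel the argument for Theorem~\ref{T:UFJ-1}: transfer the Friedberg-Jacquet period to a unitary Shalika period via theta correspondence, and then apply Beuzart-Plessis–Wan's Theorem~\ref{C:USP-1}. The new feature here is that, since $\dim V = 2m+1$ is odd and we will lift to a skew-Hermitian space of even dimension $2m$, the theta correspondence sits in the ``going-down'' direction by one step.

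Suppose $\pi = \pi(\phi, \eta) \in \Pi_\phi(\U(V))$ admits a nonzero $(\mu_1, \mu_2)$-linear period relative to $V = V_0 \oplus V_0^\perp$ with $\dim V_0 = m$ and $\dim V_0^\perp = m+1$. By complex conjugation, $\pi^\vee$ has a nonzero $(\mu_1^{-1}, \mu_2^{-1})$-linear period. Let $W$ be the maximally split skew-Hermitian space of dimension $2m$ over $E$, choose splitting characters $(\chi_V, \chi_W)$ with $\chi_W = \widetilde{\mu}_2$ (valid since $\dim W$ is even, so $\chi_W|_{F^\times}$ is trivial), and consider the resulting theta correspondence. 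Applying Corollary~\ref{C:SFJ} with a Hermitian form $B$ on $X$ chosen so that $(X^c, B^c) \cong V_0$ inside $V$, the FJ period of $\pi^\vee$ translates into a nonzero Shalika period
\[
\Hom_{\U(B) \ltimes N(X)}\!\left(\Theta(\pi),\ \mu' \circ i^{-1}\circ {\det}_X \boxtimes \psi_B\right) \neq 0
\]
for the character $\mu' = \mu_1\mu_2^{-1}\cdot \chi_V|_{E_1}$ of $E_1$; in particular, $\Theta(\pi) \neq 0$ on $\U(W)$.

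Since $\dim V - \dim W = 1$, the theta lift sits at the top of the going-down regime, and is hence automatically the first occurrence. Invoking the description of L-parameters for unitary theta lifts at first occurrence from \cite{GI2}, the nonvanishing of $\Theta(\pi)$ on $\U(W)$ is equivalent to the condition that $\phi$ contains $\chi_W = \widetilde{\mu}_2$ as a direct summand, and when this holds, $\Theta(\pi)$ is an irreducible discrete series of $\U(W)$ with L-parameter
\[
\theta_{2m}(\phi) = (\phi - \widetilde{\mu}_2)\otimes \chi_V\chi_W^{-1}.
\]
The containment of $\widetilde{\mu}_2$ in $\phi$ is equivalent to $L(s, \phi\cdot\widetilde{\mu}_2^{-1})$ having a pole at $s=0$, establishing assertion (1).

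Finally, applying Theorem~\ref{C:USP-1} to the discrete series $\Theta(\pi)$ with its nonzero $\mu'$-Shalika period, its L-parameter $\theta_{2m}(\phi)$ must take values in $\GSp_{2m}(\C)$ with similitude factor $\widetilde{\mu}'$. Untwisting the relation $\theta_{2m}(\phi) = (\phi - \widetilde{\mu}_2)\otimes\chi_V\chi_W^{-1}$, the similitude of $\phi - \widetilde{\mu}_2$ equals $\widetilde{\mu}'\cdot\chi_V^{-2}\chi_W^{2}$; using $\chi_W = \widetilde{\mu}_2$ and the identity $\chi_V|_{E_1}\circ i = \chi_V^2$ (a consequence of the conjugate-self-duality $\chi_V^c = \chi_V^{-1}$), this simplifies to $\widetilde{\mu}_1\widetilde{\mu}_2$, proving assertion (2). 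The main technical ingredient is the precise L-parameter formula for the going-down theta lift in \cite{GI2} (and tracking conjugate-self-duality signs); once that is granted, the remaining character bookkeeping is entirely parallel to the even-dimensional argument for Theorem~\ref{T:UFJ-1}.
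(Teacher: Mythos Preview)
Your proof is correct and follows exactly the approach the paper intends: the paper's own proof reads simply ``Similar to the proof of Theorem~\ref{T:UFJ-1}'', and you have correctly identified that the new ingredient in the odd case is the going-down theta lift (with $\dim V = \dim W + 1$), where nonvanishing is governed by the containment condition $\widetilde{\mu}_2 \subset \phi$ from \cite[\S 4.4 (P2)]{GI2} rather than the $\epsilon$-dichotomy of (P1), and the L-parameter of $\Theta(\pi)$ is $(\phi-\widetilde{\mu}_2)\otimes\chi_V\chi_W^{-1}$. One small quibble: the phrase ``hence automatically the first occurrence'' is not quite accurate (the first occurrence could in principle be at an even smaller $W$), but this plays no logical role in your argument since what you actually use is the nonvanishing of $\Theta(\pi)$ together with the parameter description from \cite{GI2}.
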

\vskip 5pt

\begin{proof}
Similar to the proof of Theorem \ref{T:UFJ-1}.
\end{proof}

\vskip 5pt

\begin{Rmk}
\begin{enumerate}
  \item Again, to prove the assertion (1) in this theorem, we only need to assume that $\phi$ is tempered. Moreover, if Theorem \ref{C:USP-1} is available for tempered L-parameters, then we can also prove the assertion (2) for tempered L-parameters.
  \vskip 5pt

  \item This theorem gives an analog of \cite[Thm. 1.4]{PWZ} for unitary groups. Indeed, by considering the theta correspondence between $\SO_{2n} \times \Sp_{2n-2}$, our method is applicable to the $\left(\SO_{2n} , \SO_{n-1} \times \SO_{n+1}\right)$-period problem as well.
\end{enumerate}
\end{Rmk}

\vskip 5pt

Assume that the two conditions in the theorem hold for the discrete L-parameter $\phi$. Again as in \S \ref{SS:T-USP}, we decompose $\phi-\widetilde{\mu}_2$ into a ``symplectic part'' and a ``non-symplectic part'' as
\[
  \phi-\widetilde{\mu}_2=\sum_{i\in I_\phi}\phi_i + \sum_{j\in J_\phi}\left(\phi_j + \phi_j^\vee\widetilde{\mu}\right),
\]
where $\mu=\mu_1\mu_2$. We also put 
\[
  \mathcal{S}_{\phi}^\Delta = \sum_{i\in I_\phi}\Z/2\Z \, a_i + \sum_{j\in J_\phi}\Z/2\Z \, \left(b_j + b_j^*\right) \subset \mathcal{S}_{\phi},
\]
where $a_i$, $b_j$ and $b_j^*$ are the basis elements in $\mathcal{S}_{\phi}$ corresponding to $\phi_i$, $\phi_j$ and $\phi_j^\vee\widetilde{\mu}$ respectively.  \begin{Thm} \label{T:UFJ-II}
Suppose that the L-parameter $\phi$ is discrete and satisfies all conditions in Theorem \ref{T:UFJ-I}. Let $\pi=\pi(\phi,\eta)$ be an irreducible discrete series representation of $\U(V)$ belonging to the L-packet of $\phi$.
\begin{enumerate}
\item Suppose that $I_\phi\neq\varnothing$. Then $\pi$ has non-zero $(\mu_1,\mu_2)$-linear periods if and only if
\[
  \eta~\Big|_{\mathcal{S}_\phi^\Delta}=1, \quad \text{and} \quad \eta(e)=\epsilon(V).
\] 
Here,  $e$ is the basis element in $\mathcal{S}_{\phi}$ corresponding to $\widetilde{\mu}_2$
and $\epsilon(V)$ is the sign of the Hermitian space $V$. When these conditions hold, we have
\[
  \dim Lin_{V_0}(\pi, \mu_1\boxtimes\mu_2)=2^{|I_\phi|-1}.
\]
In particular, the dimension of $Lin_{V_0}(\pi, \mu_1\boxtimes\mu_2)$ is independent of the choice of $V_0$ in this case.
\vskip 5pt

\item Suppose that $I_\phi=\varnothing$. Then $\pi$ has non-zero $(\mu_1,\mu_2)$-linear periods if and only if
\[
  \eta~\Big|_{\mathcal{S}_\phi^\Delta}=1, \quad \eta\left(\sum_{j\in J_\phi}b_j\right)=\epsilon(V_0), \quad \text{and} \quad \eta(e)=\epsilon(V).
\]
Here $\epsilon(V_0)$ is the sign of the Hermitian space $V_0$. When these conditions hold, we have 
\[
  \dim Lin_{V_0}(\pi, \mu_1\boxtimes\mu_2)=1.
\]
\end{enumerate}
\end{Thm}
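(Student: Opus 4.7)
The plan is to mirror the proof strategy of Theorem \ref{T:UFJ-2}, using the theta correspondence to convert the statement about $(\mu_1,\mu_2)$-linear periods on $\U(V)$ (with $\dim V=2m+1$) to a statement about Shalika periods on a \emph{smaller} unitary group $\U(W)$, then appealing to Theorem \ref{C:USP-2}. The essential new feature is that the relevant theta correspondence is now in the ``going-down'' range (the skew-Hermitian $W$ has dimension $2m=\dim V-1$), so one needs the dichotomy/first-occurrence behaviour of theta lifts from $\U_{2m+1}$ to $\U_{2m}$.

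First I would set $W$ to be the $2m$-dimensional maximally split skew-Hermitian space, fix $\chi_W=\widetilde{\mu}_2$, pick any $\chi_V$ with $\chi_V|_{F^\times}=\omega_{E/F}$, and run the theta correspondence between $\U(V)$ and $\U(W)$ with respect to $\psi$ and $(\chi_V,\chi_W)$. Applying Corollary \ref{C:SFJ} with $\dim X=m$, so that $j(B^c)$ is identified with $V_0$ (dim $m$) and $j(B^c)^\perp$ with $V_0^\perp$ (dim $m+1$), gives an isomorphism
\[
  Lin_{V_0}(\pi,\mu_1\boxtimes\mu_2)\;\cong\;Sha_{V_0^c}\bigl(\Theta(\pi^\vee)^\vee,\mu'\bigr),
\]
where $\mu'=\mu_1\mu_2^{-1}\cdot\chi_V|_{E_1}$ (the same identification as in the even case, just with the dimensions now unbalanced by one).

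Next I would invoke the local theta correspondence results of \cite{GI2} in the almost-equal-rank range $(\dim V, \dim W)=(2m+1,2m)$. Under our hypotheses, $\widetilde{\mu}_2$ appears in $\phi$ and corresponds to the basis element $e\in\mathcal{S}_\phi$; the non-vanishing of $\Theta(\pi)$ on the chosen $\U(W)$ is equivalent to the sign condition $\eta(e)=\epsilon(V)$, which is the source of the extra condition in Theorem \ref{T:UFJ-II} that was absent in Theorem \ref{T:UFJ-2}. When non-zero, $\Theta(\pi)=\pi(\theta(\phi),\eta')$ is a discrete series representation of $\U(W)$ with L-parameter $\theta(\phi)=(\phi-\widetilde{\mu}_2)\otimes\widetilde{\mu}_2^{-1}\chi_V$, and the associated character $\eta'$ is simply the restriction of $\eta$ to $\mathcal{S}_{\theta(\phi)}$: unlike in the even/equal-rank case, no $\epsilon$-factor twist $\eta^\flat$ appears here, which explains why the conditions in Theorem \ref{T:UFJ-II} read $\eta|_{\mathcal{S}_\phi^\Delta}=1$ rather than $\eta\cdot\eta^\flat|_{\mathcal{S}_\phi^\Delta}=1$.

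Finally I would apply Theorem \ref{C:USP-2} to $\Theta(\pi)$ on $\U(W)$ with the Shalika datum $(V_0^c,\mu')$, checking that $\theta(\phi)$ takes values in $\GSp_{2m}(\C)$ with similitude factor $\widetilde{\mu}'$ (this is a direct twist of the conclusion of Theorem \ref{T:UFJ-I}(2)) and that the decomposition of $\theta(\phi)$ into ``symplectic'' and ``non-symplectic'' parts coincides, under the twist $\otimes\widetilde{\mu}_2^{-1}\chi_V$, with the decomposition of $\phi-\widetilde{\mu}_2$ used in the statement, so that $\mathcal{S}_{\theta(\phi)}^\Delta$ and $\mathcal{S}_\phi^\Delta$ match and $\epsilon(V_0^c)=\epsilon(V_0)$. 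The two cases $I_\phi\ne\varnothing$ and $I_\phi=\varnothing$ of Theorem \ref{C:USP-2} translate directly to the two cases of the present theorem, producing the multiplicities $2^{|I_\phi|-1}$ and $1$ respectively. The main obstacle I anticipate is not conceptual but bookkeeping: correctly tracking the various splitting characters $\chi_V,\chi_W$, the pullback $\widetilde{(\cdot)}=(\cdot)\circ i$, and the identification $\U(B)\cong\U(V_0)$ via $(\mathrm{Ad}(j)\circ\iota)^{-1}$, so that the sign $\epsilon(V_0)$ on the Shalika side really equals the sign $\epsilon(V_0)$ stated on the linear-period side, and that the first-occurrence sign condition delivers $\eta(e)=\epsilon(V)$ rather than $\epsilon(V)\cdot\epsilon(W)$ or its negative.
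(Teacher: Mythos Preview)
Your proposal is correct and follows exactly the strategy the paper intends: the paper's proof of Theorem \ref{T:UFJ-II} literally reads ``Similar to the proof of Theorem \ref{T:UFJ-2}'', and you have supplied precisely the missing details, correctly identifying that the almost-equal-rank theta lift $\U_{2m+1}\to\U_{2m}$ (via \cite[\S 4.4 (P2)]{GI2}) is responsible both for the extra non-vanishing condition $\eta(e)=\epsilon(V)$ and for the absence of the $\eta^\flat$ twist. The only cosmetic difference is that the paper (as in the proof of Theorem \ref{T:UFJ-1}) first passes from $Lin_{V_0}(\pi,\mu_1\boxtimes\mu_2)$ to $Lin_{V_0}(\pi^\vee,\mu_1^{-1}\boxtimes\mu_2^{-1})$ by unitarity and then applies Corollary \ref{C:SFJ} directly to $\Theta(\pi)$, rather than writing $\Theta(\pi^\vee)^\vee$; this is the bookkeeping you already flagged.
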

\vskip 5pt

\begin{proof}
Similar to the proof of Theorem \ref{T:UFJ-2}.
\end{proof}
\vskip 5pt

We end  this section by remarking that we can also establish analogs of these theorems in the case that $\ell>1$ by using results from \cite{AG}.
More precisely, if an irreducible tempered representation $\pi = \pi(\phi,\eta)$ of $\U(V)$ satisfies $Lin_{V_0}(\pi, \mu_1\boxtimes\mu_2)\neq 0$ with $m = \dim V_0 <  (\dim V -1)/2$, then 
$\pi$ has nonzero theta lift to the quasi-split $\U(W)$ with $\dim W = 2m <\dim V -1$. The results of \cite{AG} then implies strong constraints on   $(\phi, \eta)$. Since 
 the precise statements are a bit complicated to formulate, we shall omit them here.

\vskip 10pt

\section{\bf  $\GL_n(E)$-period of $\U_{2n}(F)$} \label{S:variant}
In this section, we consider a variant of the unitary Friedberg-Jacquet period of $\U(W)$. If $W = X + Y$ is a Witt decomposition with $X$ and $Y$ maximal isotropic of dimension $n$, (so $\dim W = 2n$), then 
we have the Levi subgroup $M_{X,Y} \cong \GL(X) \subset P(X)$. For $\pi \in {\rm Irr}(\U(W))$, consider
\[  Lin_X(\pi, \chi) := \Hom_{\GL(X)} (\pi, \chi \circ {\det}_X ) \cong \Hom_{\U(W)}( {\rm ind}_{\GL(X)}^{\U(W)}  \chi^{-1} \circ {\det}_X,  \pi^{\vee}).  \]
This may legitimately be called the unitary linear period.
\vskip 5pt

We shall analyze the induced representation 
\[  \Omega_{\chi}:=  {\rm ind}_{\GL(X)}^{\U(W)}  \chi \circ {\det}_X.  \] 
In particular, we shall show:
\vskip 5pt

\begin{Prop} \label{P:stages}
The representation $\Omega_\chi$ has a $\U(W)$-equivariant filtration
\[  \mathcal{F}_{-1} = \{ 0 \} \subset \mathcal{F}_0 \subset....\subset \mathcal{F}_n= \Omega_{\chi} , \]
whose successive quotients are
\[   \mathcal{F}_k/ \mathcal{F}_{k-1}  \cong I^{\U(W)}_{P(X_k)}  (\chi \cdot |-|^{k/2}\circ {\det}_{X_k} ) \, \boxtimes   \,\left( \bigoplus_{B_k} Sha_{\U(X_k^{\perp}/X_k)}(B_k, \chi|_{E_1})  \right) , \]
where 
\begin{itemize}
\item $X_k\subset  X$ is an isotropic subspace of dimension $k$ and $P(X_k)$ is the maximal parabolic subgroup stabilizing $X_k$, with Levi factor $\GL(X_k) \times \U(X_k^{\perp}/X_k)$;

\item the nondegenerate Hermitian space $X_k^{\perp}/X_k$ has dimension $2n-2k$ and has Witt decomposition 
\[  X_k^{\perp}/X_k = X/X_k \oplus (Y \cap X_k^{\perp})/X_k; \]
\item  $B_k$ runs over the (two if $k < n$)  equivalence classes of Hermitian forms on $X$ with kernel $X_k$, so that $B_k$ induces a nondegenerate Hermitian form on $X /X_k$ and thus may also be considered  as a nondegenerate Hermitian space of dimension $n-k$ or an element in the unipotent radical of the Siegel parabolic of $\U(X_k^{\perp}/X_k)$ stabilziing the maximal isotropic subspace $(Y \cap X_k^{\perp})/X_k$;

\item $I_{P(X_k)}^{\U(W)}$ stands for normalized parabolic induction;

\item  $Sha_{\U(X_k^{\perp}/X_k)}\left(B_k, \chi|_{E_1}\right)$ is the Shalika module of $\U(X_k^{\perp}/X_k)$ relative to $B_k$:
\[ Sha_{\U(X_k^{\perp}/X_k)}(B_k , \chi|_{E_1})  =  {\rm ind}^{\U(X_k^{\perp}/X_k)}_{S(B_k)}  \chi \circ {\det}_{X/X_k} \boxtimes \psi_{B_k} , \]
where $S_{B_k}$ is the Shalika subgroup of $\U(X_k^{\perp}/X_k)$  associated to $B_k$.
\end{itemize}
\vskip 5pt

\noindent  In particular, the bottom piece of the filtration is the submodule
\[  \mathcal{F}_0 =  \bigoplus_B  Sha_{\U(W)}(B, \chi|_{E_1})  . \]
\end{Prop}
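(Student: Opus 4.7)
My plan is to realize $\Omega_\chi$ via induction in stages through the Siegel parabolic $P(X)$, perform Fourier analysis on the abelian radical $N(X)$, and then pull back a $\U(W)$-equivariant filtration from a natural $\GL(X)$-invariant stratification of the space of Hermitian forms.

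First, since $\U(W)/P(X)$ is compact, I would write $\Omega_\chi = \operatorname{ind}_{P(X)}^{\U(W)} \operatorname{ind}_{\GL(X)}^{P(X)}(\chi\circ\det_X)$. The inner compact induction over $\GL(X)\backslash P(X) \cong N(X)$ is identified with $\mathcal{S}(N(X))$ twisted by $\chi\circ\det_X$, with $N(X)$ acting by translations. Fourier transform along the self-dual pairing $N(X) \times N(Y) \to \C^\times$, $(n(A), n(B)) \mapsto \psi\bigl(\tfrac12 \Tr_X(A \circ B)\bigr)$ of \S\ref{SS:USP}, yields a $P(X)$-equivariant isomorphism with $\mathcal{S}(N(Y))$ (twisted by $\chi\circ\det_X$ and a determinantal modulus from the change of variables), on which $N(X)$ now acts by the characters $\psi_B$ ($B \in N(Y)$) and $\GL(X)$ by its natural action on the space $N(Y) \cong \mathrm{Herm}(X)$ of Hermitian forms on $X$. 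Then I stratify $N(Y)$ by the dimension of the radical: $N(Y)^{=k} := \{B : \dim\ker B = k\}$ and $N(Y)^{\leq k} := \bigsqcup_{k'\leq k} N(Y)^{=k'}$. Since having rank at least $n-k$ is an open condition, each $N(Y)^{\leq k}$ is open in $N(Y)$, yielding a $P(X)$-stable filtration of $\mathcal{S}(N(Y))$ with successive quotients $\mathcal{S}(N(Y)^{=k})$. Applying the exact functor $\operatorname{ind}_{P(X)}^{\U(W)}$ produces the claimed filtration $\mathcal{F}_k$ of $\Omega_\chi$, with $\mathcal{F}_k/\mathcal{F}_{k-1} \cong \operatorname{ind}_{P(X)}^{\U(W)}\bigl(\mathcal{S}(N(Y)^{=k})\otimes(\chi\circ\det_X)\cdot(\mathrm{mod})\bigr)$.

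To identify each graded piece, I decompose $N(Y)^{=k}$ into $\GL(X)$-orbits. By Witt's theorem, these orbits correspond bijectively to equivalence classes of nondegenerate Hermitian forms on an $(n-k)$-dimensional $E$-space: two classes when $k<n$ and one when $k=n$, distinguished by their discriminant. Since the discriminant is locally constant on $N(Y)^{=k}$, each orbit is both open and closed, so $\mathcal{S}(N(Y)^{=k}) = \bigoplus_{B_k}\mathcal{S}(\text{orbit of }B_k)$. For a chosen pair $(X_k, B_k)$, the $\GL(X)$-stabilizer of $B_k$ is the preimage of $\U(X/X_k, B_k) \subset \GL(X/X_k)$ under the Levi projection $P_k \twoheadrightarrow \GL(X/X_k)$, where $P_k \subset \GL(X)$ is the parabolic stabilizing $X_k$. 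Adjoining $N(X)$ and invoking transitivity of compact induction through the parabolic $P(X_k) \subset \U(W)$ (whose Levi decomposition is $\GL(X_k) \cdot \U(X_k^\perp/X_k) \cdot N(X_k)$), the subgroup in question, viewed inside $P(X_k)$, takes the form $\GL(X_k) \cdot S_{B_k} \cdot N(X_k)$, where $S_{B_k} = \U(X/X_k, B_k) \ltimes N(X/X_k)$ is precisely the Shalika subgroup of $\U(X_k^\perp/X_k)$ relative to $B_k$ (using the identity $N(X) \cdot N_{P_k} = N(X_k) \cdot N(X/X_k)$ inside the flag stabilizer $P(X) \cap P(X_k)$). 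Induction from this subgroup up to $P(X_k)$ then yields a character of $\GL(X_k)$ tensored with the Shalika module $Sha_{\U(X_k^\perp/X_k)}(B_k, \chi|_{E_1})$, and normalized parabolic induction to $\U(W)$ produces the asserted form of $\mathcal{F}_k/\mathcal{F}_{k-1}$.

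The main difficulty will lie not in the categorical structure of this filtration, which follows cleanly from the stratification and functoriality of induction, but in the careful bookkeeping of modulus and normalization characters. Contributions accumulate from (i) the Fourier transform on $N(X)$, which intertwines the two $\GL(X)$-actions only up to a determinantal twist; (ii) compact induction out of the non-unimodular subgroups $P_k \subset \GL(X)$ and $\GL(X_k)\cdot S_{B_k}\cdot N(X_k) \subset P(X_k)$; and (iii) the conversion between unnormalized and normalized parabolic induction via $\delta_{P(X_k)}^{1/2}$. These twists must combine to produce exactly the exponent $k/2$ on $|\det_{X_k}|$ and the restriction $\chi|_{E_1}$ (rather than $\chi$) on the Shalika factor. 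Verifying this is routine but demands meticulous accounting of half-integer powers of absolute values and of the distinction between characters of $E^\times$ and $E_1$; this calculation, done once carefully, pins down the precise normalizations in the statement.
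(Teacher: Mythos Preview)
Your proposal is correct and follows essentially the same approach as the paper's proof: induction in stages through $P(X)$, Fourier transform from $\mathcal{S}(N(X))$ to $\mathcal{S}(N(Y))$, stratification of $N(Y)$ by the rank of the Hermitian form, identification of orbits and stabilizers via Witt's theorem, and re-induction in stages through $P(X_k)$ with a final unnormalized-to-normalized conversion. Your anticipated difficulty---the modulus bookkeeping yielding the exponent $k/2$---is exactly where the paper does its one explicit calculation (using $\delta_{P(X_k)}|_{\GL(X_k)} = |{\det}_{X_k}|^{2n-k}$).
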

\vskip 5pt

\begin{proof}
By induction in stages, 
\begin{equation}\label{E:stages}
 \Omega_{\chi} =  {\rm ind}_{\GL(X)}^{\U(W)}  \chi \circ {\det}_X \cong {\rm ind}_{P(X)}^{\U(W)}\left(   (\chi \circ {\det}_X) \cdot (  {\rm ind}_{\GL(X)}^{P(X)}  1) \right). \end{equation}
The induced representation ${\rm ind}_{\GL(X)}^{P(X)} 1$ is realized on the space $C^{\infty}_c(N(X))$ of Schwarz functions on $N(X)$, with the following action of $P(X)$:
\[
\begin{cases} 
  (m \cdot f) (n(x)) = f ( m^{-1} n(x) m), &\text{  if $m \in \GL(X)$;} \\
  (n(a) \cdot f)(n(x)) = f ( n(a+x)), &\text{  if $n(a) \in N(X)$.} \end{cases} \]
We change this model by using the Fourier transform:
\[  \mathcal{F}:  C^{\infty}_c(N(X)) \longrightarrow C^{\infty}_c(N(Y))  \]
defined by
\[  \mathcal{F}(f)( n(b))  = \int_{N(X)}  f( n(x)) \cdot \overline{\psi\left(\frac{1}{2}{\rm Tr}_X( y \circ x)\right)} \, dx.   \] 
Transporting the action  over by $\mathcal{F}$, one obtains the following action of $P(X)$ on $C^{\infty}_c(N(Y))$:
\[  \begin{cases}
(m \cdot f)(n(y)) =  |{\det}_X(m)|^n  \cdot f ( m^{-1} \cdot n(y))  , &\text{  if $m \in \GL(X)$;} \\
(n(a) \cdot f)( n(y)) = \psi_y(a)  \cdot f(n(y)), &\text{  if $n(a) \in N(X)$.} \end{cases} \]  
\vskip 5pt

Now we consider the orbits of the $\GL(X)$-action on $N(Y)$, which provide a stratification of $N(Y)$.  Recall that elements of $N(Y)$ can be regarded as Hermitian forms on $X$. Thus, there are two open orbits, given by the two equivalence classes of nondegenerate Hermitian forms, whereas the degenerate orbits correspond to Hermitian forms which are degenerate. The following lemma summarizes the result:
\vskip 5pt

\begin{Lem}
For each integer $ 0 \leq k \leq n = \dim X$, there are two (if $k < n$) or one (if $k =n$)  $\GL(X)$-orbits of Hermitian forms on $X$ whose kernel $X_k$ has dimension $k$, determined by the nondegenerate Hermitian forms  induced on $X/X_k$. For such a Hermitian form $B_k$ on $X$ (which induces a nondegenerate Hermitian form on $X/X_k$, still denoted by $B_k$), the corresponding stabilizer in $\GL(X)$ is the subgroup of the parabolic subgroup
\[  \mathcal{Q}(X_k) = (\GL(X_k) \times \GL(X/X_k)) \ltimes \mathcal{U}(X_k) \subset \GL(X)  \]  
stabilizing $X_k$,  given by
\[  \mathcal{Q}(X_k,B_k) := \left(  \GL(X_k) \times \U(X/X_k, B_k) \right) \ltimes \mathcal{U}(X_k). \]
\end{Lem}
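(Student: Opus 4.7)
The plan is to stratify the space of Hermitian forms on $X$ by the dimension of the radical and reduce the classification to the nondegenerate case, where the classification of Hermitian forms over $E/F$ recalled in \S\ref{SS:nonarch} applies.

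First, I would note that the radical (or kernel)
\[
\ker B := \{x \in X : B(x, y) = 0 \text{ for all } y \in X\}
\]
of a Hermitian form $B$ on $X$ is a $\GL(X)$-equivariant invariant: if $g^*B = B$ then necessarily $g \cdot \ker B = \ker B$. Since $\GL(X)$ acts transitively on the Grassmannian of $k$-dimensional subspaces, any Hermitian form with radical of dimension $k$ lies in the same $\GL(X)$-orbit as some form whose radical equals a fixed reference subspace $X_k \subset X$. It therefore suffices to classify orbits among forms whose radical is exactly $X_k$.

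Second, fix any complement $X_k' \subset X$ to $X_k$ and identify $X_k' \cong X/X_k$. A Hermitian form $B$ with radical $X_k$ is entirely determined by its restriction $B|_{X_k' \times X_k'}$, which is a nondegenerate Hermitian form on $X_k'$ and agrees under the identification with the induced nondegenerate form on $X/X_k$. The elements of the parabolic subgroup $\mathcal{Q}(X_k) = (\GL(X_k) \times \GL(X/X_k)) \ltimes \mathcal{U}(X_k)$ that act nontrivially on such a restriction are precisely those in the $\GL(X/X_k)$ factor, since the $\GL(X_k)$ factor acts on $X_k$ alone and $\mathcal{U}(X_k)$ sends $X_k'$ into $X_k$. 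Hence $\GL(X)$-orbits among Hermitian forms with radical $X_k$ are in bijection with $\GL(X/X_k)$-equivalence classes of nondegenerate Hermitian forms on $X/X_k$. Applying the classification in \S\ref{SS:nonarch}, there are exactly two such classes (distinguished by sign) when $n - k > 0$, and a unique class (the zero form) when $k = n$.

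Finally, for the stabilizer computation: any $g \in \GL(X)$ with $g^*B_k = B_k$ lies in $\mathcal{Q}(X_k)$ by the first step. Writing $g$ in block form with respect to $X = X_k \oplus X_k'$, the equation $g^*B_k = B_k$ reduces---using that $B_k$ vanishes identically on $X_k$ so that all blocks involving $X_k$ pair trivially---to the condition that the induced automorphism of $X/X_k$ lies in $\U(X/X_k, B_k)$. The $\GL(X_k)$ factor and the entire unipotent radical $\mathcal{U}(X_k)$ (which acts as the identity on both $X_k$ and $X/X_k$) are then unconstrained, giving precisely the stabilizer $\mathcal{Q}(X_k, B_k)$ as claimed. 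The argument is essentially Witt's theorem combined with routine bookkeeping; the only step warranting a moment of care is verifying that $\mathcal{U}(X_k)$ preserves $B_k$, which is immediate from the vanishing of $B_k$ on $X_k$.
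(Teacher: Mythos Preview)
Your proof is correct and supplies exactly the routine verification the paper omits: the lemma is stated without proof inside the argument for Proposition~\ref{P:stages}, being regarded as a standard consequence of Witt's theorem together with the classification of nondegenerate Hermitian spaces in \S\ref{SS:nonarch}. Your stratify-by-radical argument and block-matrix stabilizer computation are precisely the expected details.
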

\vskip 5pt

As a consequence of the lemma, one has:
\vskip 5pt

\begin{Lem}
The stratification of $N(Y)$ by $\GL(X)$-orbits gives rise to a $P(X)$-equivariant filtration  
\[  \mathcal{S}_{-1} =  \{0 \} \subset \mathcal{S}_0  \subset......\subset \mathcal{S}_n =  {\rm ind}_{\GL(X)}^{P(X)} 1 , \]
whose successive quotients  are given as follows. For $0 \leq k \leq n$,
\[    \mathcal{S}_k / \mathcal{S}_{k-1} \cong \bigoplus_{B_k} {\rm ind}^{P(X)}_{\mathcal{Q}(X_k,B_k) \ltimes N(X)}   |{\det}_{X_k}|^n \boxtimes \psi_{B_k}   ,  \]
where the sum over $B_k$ runs over the two equivalence classes of nondegenerate Hermitian spaces on $X/X_k$ and the subgroup $\U(X/X_k, B_k) \ltimes \mathcal{U}(X_k) \subset \mathcal{Q}(X_k, B_k)$ acts trivially on the inducing module.
\end{Lem}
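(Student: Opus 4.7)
The plan is to work in the Fourier-dual model $C^\infty_c(N(Y))$ of ${\rm ind}_{\GL(X)}^{P(X)} 1$ set up in the proof of the proposition, and to exploit the $\GL(X)$-orbit classification of the preceding lemma. Identifying $N(Y)$ with the space of Hermitian forms on $X$, for each $0 \leq k \leq n$ I would set
\[
  \mathcal{O}_k = \{ B \in N(Y) : \dim \ker(B) \leq k \} = \{ B : \mathrm{rank}(B) \geq n-k \}.
\]
Since the vanishing of all $(n-k+1) \times (n-k+1)$ minors is a closed condition, each $\mathcal{O}_k$ is open, giving a chain $\mathcal{O}_0 \subset \cdots \subset \mathcal{O}_n = N(Y)$ of open subsets. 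The $\GL(X)$-action on $N(Y)$ preserves rank, while $N(X)$ acts trivially on the base (only by characters on fibers), so each $\mathcal{O}_k$ is $P(X)$-stable; hence $\mathcal{S}_k := C^\infty_c(\mathcal{O}_k)$ gives the desired increasing filtration of $P(X)$-submodules.

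For the successive quotients, the standard extension-by-zero short exact sequence for $C^\infty_c$ on $\ell$-spaces yields
\[
  0 \to \mathcal{S}_{k-1} \to \mathcal{S}_k \to C^\infty_c(\mathcal{O}_k \setminus \mathcal{O}_{k-1}) \to 0,
\]
so $\mathcal{S}_k / \mathcal{S}_{k-1} \cong C^\infty_c(\mathcal{O}_k \setminus \mathcal{O}_{k-1})$ as $P(X)$-modules. By the preceding lemma, $\mathcal{O}_k \setminus \mathcal{O}_{k-1}$, the locus of forms of kernel dimension exactly $k$, decomposes into two $\GL(X)$-orbits when $k < n$ (one for each equivalence class $B_k$ of nondegenerate Hermitian forms on $X/X_k$) and into a single orbit when $k = n$, each of the form $\GL(X) \cdot B_k$ with $\GL(X)$-stabilizer $\mathcal{Q}(X_k, B_k)$. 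Combining orbit-stabilizer with Frobenius reciprocity then identifies
\[
  C^\infty_c(\GL(X) \cdot B_k) \cong {\rm ind}^{P(X)}_{\mathcal{Q}(X_k, B_k) \ltimes N(X)} \chi_k,
\]
where $\chi_k$ is the character by which the full $P(X)$-stabilizer acts on the fiber at $B_k$.

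To pin down $\chi_k$, I would evaluate the explicit action formulas from the proposition at $B_k$. The $N(X)$-factor acts by $n(a) \mapsto \psi_{B_k}(a)$, by design of the Pontryagin duality via $\psi$. For $m \in \mathcal{Q}(X_k, B_k)$ stabilizing $B_k$, the formula $(m \cdot f)(B_k) = |{\det}_X(m)|^n f(B_k)$ contributes the scalar $|{\det}_X(m)|^n$; restricting $|{\det}_X|^n$ to $\mathcal{Q}(X_k, B_k) = (\GL(X_k) \times \U(X/X_k, B_k)) \ltimes \mathcal{U}(X_k)$ yields $|{\det}_{X_k}|^n$ on $\GL(X_k)$ and is trivial on $\U(X/X_k, B_k)$ (whose elements have $|{\det}| = 1$) and on the unipotent part $\mathcal{U}(X_k)$. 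This exactly matches the character $|{\det}_{X_k}|^n \boxtimes \psi_{B_k}$ claimed in the statement.

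The main obstacle will be the careful bookkeeping in two places: first, verifying that the $\GL(X)$-action transported by the Fourier transform to $N(Y)$ is the geometric change-of-variables action on Hermitian forms (so that rank is genuinely preserved and the preceding orbit lemma applies verbatim); and second, tracking the modular twist $|{\det}_X|^n$ so that the normalization conventions for parabolic induction do not introduce a stray half-density factor. The degenerate case $k = n$, where $X_n = X$, $\mathcal{Q}(X_n, 0) = \GL(X)$, and only the zero form contributes, should fit into the same framework without change.
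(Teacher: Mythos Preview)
Your proposal is correct and follows exactly the approach the paper has in mind: the paper states this lemma as an immediate consequence of the orbit classification and the explicit $P(X)$-action formulas on the Fourier-transformed model $C^\infty_c(N(Y))$, without spelling out the Bernstein--Zelevinsky filtration argument that you have written down. Your identification of the open rank strata $\mathcal{O}_k$, the extension-by-zero short exact sequences, and the fiber computation at $B_k$ are precisely the standard mechanism behind the stated result, and your caution about the modular twist is well-placed (here one is using unnormalized compact induction, so no extra half-density appears at this stage; the normalization enters only later when the paper passes to $I_{P(X_k)}^{\U(W)}$).
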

\vskip 10pt

When we use the result of the above lemma in (\ref{E:stages}), we see that $\Omega_{\chi}$ has a $\U(W)$-equivariant filtration
\[  \mathcal{F}_{-1} = \{ 0 \} \subset \mathcal{F}_0 \subset....\subset \mathcal{F}_n= \Omega_\chi , \]
whose successive quotients are
\[   \mathcal{F}_k/ \mathcal{F}_{k-1}  \cong {\rm ind}^{\U(W)}_{\mathcal{Q}(X_k, B_k) \ltimes N(X)} (\chi \circ {\det}_{X}) |{\det}_{X_k}|^n \boxtimes \psi_{B_k}.   \]
Observe that 
\[  \mathcal{Q}(X_k, B_k) \ltimes N(X) \subset P(X_k)  ,  \]
where $P(X_k)$ is the maximal parabolic subgroup of $\U(W)$ stabilizing $X_k$, with Levi factor $\GL(X_k) \times \U(X_k^{\perp}/X_k)$, 
Moreover, we can re-express $\mathcal{Q}(X_k, B_k)\ltimes N(X)$ as:
\[  \mathcal{Q}(X_k, B_k)\ltimes N(X) = (\GL(X_k)  \times  S_{B_k} ) \ltimes N(X_k) , \]
where $S_{B_k}$ is the Shalika subgroup of $\U(X_k^{\perp}/X_k)$ associated to $B_k$. 

\vskip 10pt
Hence, by induction in stages, we have:
\[   \mathcal{F}_k/ \mathcal{F}_{k-1}  \cong  \bigoplus_{B_k} {\rm Ind}^{\U(W)}_{P(X_k)}  (\chi \cdot |-|^n \circ {\det}_{X_k} ) \, \boxtimes   \,Sha_{\U(X_k^{\perp}/X_k)}\left(B_k, \chi|_{E_1}\right) , \]
where we are using  unnormalied parabolic induction here, and
\[ 
	Sha_{\U(X_k^{\perp}/X_k)}(B_k, \chi|_{E_1}) = {\rm ind}^{\U(X_k^{\perp}/X_k)}_{S_{B_k}}  \chi \circ {\det}_{X/X_k} \boxtimes \psi_{B_k}  
\]
is the Shalika module of $\U(X_k^{\perp}/X_k)$ relative to $B_k$. Taking note that the modulus character of $P(X_k)$ is given by $|{\det}_{X_k}|^{2n-k}$ when restricted to $\GL(X_k)$, we see that
\[   \mathcal{F}_k/ \mathcal{F}_{k-1}  \cong \bigoplus_{B_k}  I^{\U(W)}_{P(X_k)}  (\chi \cdot |-|^{k/2}\circ {\det}_{X_k} ) \, \boxtimes   \,Sha_{\U(X_k^{\perp}/X_k)}(B_k, \chi|_{E_1}),   \]
where we are using normalized parabolic induction here.
This completes the proof of Proposition \ref{P:stages}.
\end{proof}
\vskip 10pt

We now have:
 
 \vskip 5pt
\begin{Thm}  \label{T:variant}
Let $W = X \oplus Y$ be a $2n$-dimensional skew-Hermitian space with the given  Witt decomposition, and let
$\pi \in {\rm Irr}(\U(W))$ be tempered. Then for any unitary character $\chi$ of $E^{\times}$, 
\[  \Hom_{\GL(X)}(\pi, \chi\circ {\det}_X)  \cong \bigoplus_B \Hom_{S_B}( \pi, \chi \circ {\det}_{X} \boxtimes \psi_B) , \]
where the sum runs over the two equivalence classes of  nondegenerate Hermitian spaces of dimension $n$. 
\end{Thm}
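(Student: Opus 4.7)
\emph{Strategy.} The plan is to combine Frobenius reciprocity with the stratification of $\Omega_{\chi^{-1}}$ provided by Proposition \ref{P:stages}, using the temperedness hypothesis on $\pi$ to kill all but the bottom stratum. By the reciprocity recorded at the opening of this section,
\[ \Hom_{\GL(X)}(\pi, \chi \circ {\det}_X) \cong \Hom_{\U(W)}(\Omega_{\chi^{-1}}, \pi^{\vee}). \]
Applying Proposition \ref{P:stages} with $\chi$ replaced by $\chi^{-1}$ yields a $\U(W)$-equivariant filtration $\{\mathcal{F}_k\}_{k=-1}^{n}$ of $\Omega_{\chi^{-1}}$ whose bottom piece $\mathcal{F}_0 = \bigoplus_B Sha_{\U(W)}(B, \chi^{-1}|_{E_1})$ is precisely the sum of Shalika modules appearing on the right-hand side of the theorem, and whose higher graded pieces $Q_k := \mathcal{F}_k/\mathcal{F}_{k-1}$ for $k \geq 1$ are normalized parabolic inductions from the parabolic $P(X_k) \subset \U(W)$ of inducing data whose restriction to the Levi factor $\GL(X_k)$ carries the positive twist $|-|^{k/2} \circ {\det}_{X_k}$.

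\emph{Main step.} The heart of the argument, and the main obstacle, is to show that for each $k \geq 1$,
\[ \operatorname{Ext}^{i}_{\U(W)}(Q_k, \pi^{\vee}) = 0 \quad \text{for every } i \geq 0. \]
Mere $\Hom$-vanishing on each graded piece would only upgrade the filtration to an injection $\Hom_{\U(W)}(\Omega_{\chi^{-1}}, \pi^{\vee}) \hookrightarrow \Hom_{\U(W)}(\mathcal{F}_0, \pi^{\vee})$; higher $\operatorname{Ext}$-vanishing is what turns this into an isomorphism. By the exactness of parabolic induction and the Jacquet functor, combined with Bernstein's second adjointness, there is an identification
\[ \operatorname{Ext}^{i}_{\U(W)}(Q_k, \pi^{\vee}) \cong \operatorname{Ext}^{i}_{M}\!\left(\tau_k,\, r_{\bar{P}(X_k)}(\pi^{\vee})\right), \]
where $M = \GL(X_k) \times \U(X_k^{\perp}/X_k)$ is the Levi of $P(X_k)$ and $\tau_k$ denotes the inducing datum of $Q_k$. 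The $F$-split central torus $A_{X_k} \subset \GL(X_k)$ acts on $\tau_k$ through a character with strictly positive exponent along $A_{X_k}$ (inherited from the twist $|-|^{k/2} \circ {\det}_{X_k}$), while Casselman's temperedness criterion applied to $\pi^{\vee}$ with respect to the opposite parabolic $\bar{P}(X_k)$ forces every $A_{X_k}$-exponent occurring in $r_{\bar{P}(X_k)}(\pi^{\vee})$ to be non-positive. Hence the two modules carry disjoint $A_{X_k}$-eigencharacters, and since $A_{X_k}$ is central in $M$, all $\operatorname{Ext}$ groups vanish by the standard central-character decomposition.

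\emph{Conclusion.} Iterating the resulting long exact $\operatorname{Ext}$-sequences along the filtration yields the isomorphism $\Hom_{\U(W)}(\Omega_{\chi^{-1}}, \pi^{\vee}) \cong \Hom_{\U(W)}(\mathcal{F}_0, \pi^{\vee})$. By Frobenius reciprocity applied to each Shalika summand together with duality,
\[ \Hom_{\U(W)}(\mathcal{F}_0, \pi^{\vee}) \cong \bigoplus_B \Hom_{S_B}(\pi, \chi \circ {\det}_X \boxtimes \psi_{B}^{-1}). \]
Finally, the involution $B \mapsto -B$ preserves the Shalika subgroup ($S_B = S_{-B}$) as well as the set of two equivalence classes of nondegenerate Hermitian forms on $X$, while taking $\psi_{B}^{-1} = \psi_{-B}$ to $\psi_{B}$; reindexing by this involution recovers the form stated in the theorem.
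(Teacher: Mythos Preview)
Your proof is correct and follows essentially the same route as the paper's: Frobenius reciprocity, the filtration of Proposition~\ref{P:stages}, Bernstein's second adjointness, and the mismatch of central exponents coming from Casselman's temperedness criterion to kill the $k\geq 1$ strata. Your treatment is in fact slightly more careful than the paper's on two points: you note that all $\operatorname{Ext}^i$ vanish (the paper only records $\Hom$ and $\operatorname{Ext}^1$, which is all that is strictly needed for the induction along the filtration), and you make explicit the reindexing $B\mapsto -B$ needed to pass from $\psi_B^{-1}$ to $\psi_B$ in the final identification, a step the paper leaves implicit.
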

\vskip 5pt

\begin{proof}
For $\pi \in {\rm Irr}(\U(W))$, 
\[  \Hom_{\GL(X)}(\pi, \chi\circ {\det}_X) \cong  \Hom_{\U(W)} ( \Omega_{\chi^{-1}}, \pi^{\vee}). \]
By Proposition \ref{P:stages}, we need to consider
\[ 
	\Hom_{\U(W)} \left(   I^{\U(W)}_{P(X_k)}  (\chi^{-1} \cdot |-|^{k/2}\circ {\det}_{X_k} ) \, \boxtimes   \,Sha_{\U(X_k^{\perp}/X_k)}(B_k, \chi^{-1}|_{E_1})  , \pi^{\vee} \right), 
\]
which, by the second adjointness theorem of Bernstein, is equal to
\[  \Hom_{\GL(X_k) \times \U(X_k^{\perp})} \left( \chi^{-1} \cdot |-|^{k/2}\circ {\det}_{X_k}  \, \boxtimes   \,Sha_{\U(X_k^{\perp}/X_k)}(B_k, \chi^{-1}|_{E_1}), R_{\overline{P(X_k)}}(\pi^{\vee}) \right). \]
Here, $R_{\overline{P(X_k)}}(\pi^{\vee})$ is the normalized Jacquet module of $\pi^{\vee}$ with respect to the opposite parabolic $\overline{P(X_k)}$.
\vskip 5pt

Now since $\pi^{\vee}$ is tempered, the central exponents of $R_{\overline{P(X_k)}}(\pi^{\vee})$, as unramified characters of $\GL(X_k)$, are of the form $|{\det}_{X_k}|^t$ with $t \leq 0$. On the other hand, the central exponent of $\chi^{-1} \cdot |-|^{k/2}\circ {\det}_{X_k}  \, \boxtimes   \,Sha_{\U(X_k^{\perp}/X_k)}(B_k, \chi^{-1}|_{E_1})$ is equal to $|{\det}_{X_k}|^{k/2}$. Thus, if $k>0$, the Hom space above vanishes and likewise
\[   {\rm Ext}^1_{\GL(X_k) \times \U(X_k^{\perp})} \left( \chi^{-1} \cdot |-|^{k/2}\circ {\det}_{X_k}  \, \boxtimes   \,Sha_{\U(X_k^{\perp}/X_k)}(B_k,\chi^{-1}|_{E_1}), R_{\overline{P(X_k)}}(\pi^{\vee}) \right) = 0. \]
In other words, only the term $k =0$ in Proposition \ref{P:stages} contributes and the theorem follows. 
\end{proof}
\vskip 5pt

In view of the results of Beuzart-Plessis and Wan recalled in Theorem \ref{C:USP-1} and Theorem \ref{C:USP-2}, the above theorem allows one to  determine $\dim  \Hom_{\GL(X)}(\pi, \chi\circ {\det}_X)$ precisely for a discrete series representation $\pi$. 
\vskip 5pt

\begin{Thm}\label{T:Variant-ULP}
Let $\pi=\pi(\phi,\eta)$ be an irreducible discrete series representation of $\U(W)$, and $\chi$ a character of $E_1$. Then $Lin_X(\pi, \chi)\neq 0$ if and only if the following two conditions hold:
\begin{enumerate}
\item As a representation of $WD_E$, $\phi$ takes value in $\GSp_{2n}(\C)$ with similitude factor $\widetilde{\chi}$.
\vskip 5pt

\item Let $\mathcal{S}_\phi^\Delta$ be the subgroup defined as in \S \ref{SS:T-USP}. The character $\eta$ satisfies
\[
	\eta~\Big|_{\mathcal{S}_\phi^\Delta}=1.
\] 
\end{enumerate}
When these two conditions hold, we have
\[
	\dim Lin_X(\pi, \chi)=2^{|I_\phi|},
\]
where $I_{\phi}$ is as defined in  \S \ref{SS:T-USP}, so that $|I_{\phi}|$ is the number of ``symplectic summands" in $\phi$.
\end{Thm}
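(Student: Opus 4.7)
The plan is to combine Theorem \ref{T:variant} with the Beuzart-Plessis--Wan results recalled in Theorem \ref{C:USP-1} and Theorem \ref{C:USP-2}. Since $\pi$ is a discrete series, and in particular tempered, Theorem \ref{T:variant} already gives
\[
  Lin_X(\pi, \chi) \;\cong\; \bigoplus_B Sha_B\bigl(\pi, \chi\bigr),
\]
where $B$ runs over the two equivalence classes of nondegenerate Hermitian forms on $X$, distinguished by the sign $\epsilon(B) = \pm 1$. The whole computation then reduces to summing the dimension formulas from Theorem \ref{C:USP-2} over these two Hermitian classes.

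First, if $Lin_X(\pi,\chi) \ne 0$, then $Sha_B(\pi, \chi) \ne 0$ for at least one $B$, and Theorem \ref{C:USP-1} forces $\phi$ to take values in $\GSp_{2n}(\C)$ with similitude factor $\widetilde{\chi}$. This yields condition (1). Assuming (1), I split into the two cases according to whether $I_\phi$ is empty. When $I_\phi \ne \varnothing$, Theorem \ref{C:USP-2}(1) says that each summand $Sha_B(\pi, \chi)$ has dimension $2^{|I_\phi|-1}$ precisely when $\eta|_{\mathcal{S}_\phi^\Delta} = 1$, and this dimension is independent of $\epsilon(B)$, so summing over the two classes gives $2 \cdot 2^{|I_\phi|-1} = 2^{|I_\phi|}$.

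When $I_\phi = \varnothing$, Theorem \ref{C:USP-2}(2) says that $Sha_B(\pi,\chi)$ is nonzero (and then one-dimensional) exactly when both $\eta|_{\mathcal{S}_\phi^\Delta} = 1$ and $\eta(\sum_{j \in J_\phi} b_j) = \epsilon(B)$. The key observation is that $\epsilon(B)$ ranges over both signs $\pm 1$ as $B$ ranges over its two classes, so once $\eta|_{\mathcal{S}_\phi^\Delta} = 1$ there is a unique $B$ matching the required sign, contributing $1 = 2^0 = 2^{|I_\phi|}$. Thus the sign condition $\eta(\sum b_j) = \epsilon(B)$ collapses upon summation, and both cases give the uniform answer $\dim Lin_X(\pi, \chi) = 2^{|I_\phi|}$, with nonvanishing controlled only by $\eta|_{\mathcal{S}_\phi^\Delta} = 1$.

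There is no serious obstacle here beyond this bookkeeping: the work is all done by Theorem \ref{T:variant} (which reduces the $\GL(X)$-period to a sum of Shalika periods over the two $n$-dimensional Hermitian forms) and by the Beuzart-Plessis--Wan theorems (which compute each Shalika multiplicity). The only genuinely new point to emphasize is that averaging over both Hermitian classes \emph{erases} the $\epsilon(B)$-dependent second constraint that appears in the $I_\phi = \varnothing$ case of Theorem \ref{C:USP-2}, which is what allows the final answer to take the single clean form $2^{|I_\phi|}$ governed purely by the triviality of $\eta$ on $\mathcal{S}_\phi^\Delta$.
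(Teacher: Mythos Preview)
Your proposal is correct and follows exactly the approach the paper intends: the paper itself gives no explicit proof of Theorem \ref{T:Variant-ULP}, stating only that it follows by combining Theorem \ref{T:variant} with the Beuzart-Plessis--Wan results in Theorems \ref{C:USP-1} and \ref{C:USP-2}, and your argument spells out precisely this bookkeeping, including the key point that summing over the two Hermitian classes of $B$ erases the $\epsilon(B)$-dependence in the $I_\phi = \varnothing$ case.
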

\vskip 5pt

We also remark that the argument in this section applies equally well in the case of $\GL_{2n}$, giving a comparison between the linear period and the Shalika period (at least for tempered representations) which is perhaps more direct than the approach of theta correspondence given in 
\cite[\S 3 and \S4]{G} (which is the $\GL$-analog of Corollary \ref{C:SFJ}).
\vskip 15pt

\section{\bf Global Periods} \label{S:global}
In this section, we consider the global setting. Let $F$ be a number field and $E$ a quadratic field extension of $F$. We denote by $\A$ and $\A_E$ the adele ring of $F$ and $E$ respectively. As in \S \ref{S:prelim}, we shall use $c$ to denote the non-trivial element of $\Gal(E/F)$, and $\omega_{E/F}$ to denote the quadratic character of $F^\times\backslash\A$ associated to $E/F$ by global class field theory. We fix a non-trivial additive character $\psi$ of $F\backslash\A$. For an algebraic group $G$ over $F$, we use the symbol $[G]$ to denote $G(F)\backslash G(\A)$. 
\vskip 5pt

\subsection{\bf Global unitary Shalika period integral}
We first introduce the global unitary Shalika period. Let $W$ be a skew-Hermitian space over $E$. Assume that
\[
	W=X\oplus Y
\]
is a Witt decoposition of $W$. As in \S \ref{SS:USP}, we consider the Siegel parabolic subgroup $P(X)=\GL(X)\cdot N(X)$ stabilizing $X$. For a given (non-degenerate) Hermitian form $B$ on $X$, one has an associated automorphic character of $N(X)$
\[
	\psi_B: N(X)(F)\backslash N(X)(\A)\longrightarrow\C^\times,\quad n(A)\longmapsto\psi\left(\frac{1}{2}Tr_X(A\circ B)\right)
\]
for $n(A)\in N(X)(\A)$. If $f$ is an automorphic form of $\U(W)$, the $\left(N(X),\psi_B\right)$-coefficient of $f$ is a function on $\U(W)(\A)$ defined by 
\[
  \left(\mathcal{F}_{\psi_B}f\right)(g):=\int_{[N(X)]} f(ng) \cdot \overline{\psi_B(n)}\,dn
\]
for $g\in\U(W)(\A)$. 
\vskip 5pt

Consider the stabilizer of $\psi_B$ in $\GL(X)(\A)$, which is the isometry group $\U(X,B)(\A)$. We shall call the (algebraic) subgroup
\[
	S_B := \U(X,B) \ltimes N(X) \subset P(X)
\]
the Shalika subgroup relative to $B$. For any (unitary) automorphic character $\mu$ of $E_1$, $(\mu\circ \det_X) \boxtimes \psi_B$ defines an automorphic character of $S_B$. If $f$ is a cusp form of $\U(W)$, the $(B, \mu)$-Shalika period integral of $f$ is defined by:
\begin{align*}
	Sha_B(f,\mu):=&\int_{[\U(B)\ltimes N(X)]}f(ng)\cdot\overline{\mu\left({\det}_X(g)\right)}\cdot\overline{\psi_B(n)}\,dn\,dg\\
  =&\int_{[\U(B)]}\left(\mathcal{F}_{\psi_B}f\right)(g)\cdot\overline{\mu\left({\det}_X(g)\right)}\,dg.
\end{align*}
The convergence of this integral is guaranteed by the cuspidality of $f$. We say a cuspidal representation $\pi$ of $\U(W)$ has non-zero Shalika period if $Sha_B(f,\mu)\neq 0$ for some $f\in\pi$.
\vskip 5pt

\subsection{\bf Global unitary Friedberg-Jacquet period integral}
Next we introduce the global unitary Friedberg-Jacquet period. Let $V$ be a Hermitian space over $E$. Assume that
\[
	V=V_0 \oplus V_0^\perp,
\]
where $V_0$ is a nondegenerate Hermitian subspace of $V$. As in \S \ref{SS:UFJ}, we set 
\[
	H_{V_0} := \U(V_0) \times \U(V_0^{\perp}) \subset \U(V).
\]
Fix two (unitary) automorphic characters $\mu_1$ and $\mu_2$ of $E_1$. If $f$ is a cusp form of $\U(V)$, the $(\mu_1, \mu_2)$-linear period integral of $f$ is defined by:
\[
	Lin_{V_0}(f, \mu_1\boxtimes\mu_2):=\int_{[\U(V_0)\times \U\left(V_0^\perp\right)]}f(gh)\cdot\overline{\mu_1({\det}_{V_0}(g))}\cdot\overline{\mu_2({\det}_{V_0^{\perp}}(h))}\,dg\,dh .
\]
The convergence of this integral is again guaranteed by the cuspidality of $f$. We say a cuspidal representation $\pi$ of $\U(V)$ has non-zero $(\mu_1, \mu_2)$-linear period (or unitary FJ period) if $Lin_{V_0}(f, \mu_1\boxtimes\mu_2)\neq 0$ for some $f\in\pi$.
\vskip 5pt

By Fubini's theorem, we can decompose $Lin_{V_0}(f, \mu_1\boxtimes\mu_2)$ into a double integral
\[
  Lin_{V_0}(f, \mu_1\boxtimes\mu_2)=\int_{[\U\left(V_0^\perp\right)]}\mathcal{P}_{V_0}\left(h\cdot f,\mu_1\right)\cdot\overline{\mu_2({\det}_{V_0^{\perp}}(h))}\,dh ,
\]
where
\[
  \mathcal{P}_{V_0}\left(f,\mu_1\right)=\int_{[\U(V_0)]}f(g)\cdot\overline{\mu_1({\det}_{V_0}(g))}\,dg
\]
is the $\left(\U(V_0),\mu_1\circ{\det}_{V_0}\right)$-period integral of $f$. Likewise, we have
\[
  Lin_{V_0}(f, \mu_1\boxtimes\mu_2)=\int_{[\U\left(V_0\right)]}\mathcal{P}_{V_0^\perp}\left(g\cdot f,\mu_2\right)\cdot\overline{\mu_1({\det}_{V_0}(g))}\,dg .
\]
\vskip 5pt

\subsection{\bf Transfer of global periods}
This subsection is aimed at establishing a global analog of Theorem \ref{T:SFJ}. Let $W = X +Y$ be a maximally split skew-Hermitian space and $V$ a Hermitian space over $E$. We fix a pair of automorphic characters $(\chi_V, \chi_W)$ of $\A_E^{\times}$ such that:
\[ 
	\chi_V|_{\A_F^{\times}} = \omega_{E/F}^{\dim V} \quad \text{and} \quad \chi_W|_{\A_F^{\times}} = \omega_{E/F}^{\dim W}.
\]
With respect to the additive character $\psi$ and the pair of characters $(\chi_V, \chi_W)$, we may consider the global theta correspondence between $\U(V)$ and $\U(W)$. We will work with the Schr\"odinger model of the Weil representation $\Omega$ relative to the Siegel parabolic $P(X)$, which is realized on the space $\mathcal{S}\left((Y\otimes V)(\A)\right)$ of Schwarz functions on $Y\otimes V = \Hom_E(X^c, V)$, and the action of $\U(V)(\A)\times P(X)(\A)$ is given by the same formulas as (\ref{F:Schro-1}), (\ref{F:Schro-2}) and (\ref{F:Schro-3}). 
\vskip 5pt

Let $\pi$ be a cuspidal representation of $\U(V)$. Consider its global theta lift $\Theta(\pi)$ to the group $\U(W)$, i.e. the subspace of automorphic forms generated by
\[
	 \theta(\varphi,f)(g)=\int_{[\U(V)]}\theta_\varphi(g,h) \cdot \overline{f(h)}\,dh,
\] 
where $\varphi\in\mathcal{S}\left((Y\otimes V)(\A)\right)$, $f\in\pi$, and
\[
	\theta_\varphi(g,h)=\sum_{T\in (Y\otimes V)(F)}\left(\omega(g,h)\cdot\varphi\right)(T) .
\] 
\vskip 5pt

Firstly we compute the $\left(N(X),\psi_B\right)$-coefficient of $\theta(\varphi,f)$. By definition, we have
\[
  \mathcal{F}_{\psi_B}\left(\theta(\varphi,f)\right) (g)=\int_{[N(X)]}\left(\int_{[\U(V)]}\theta_\varphi(ng,h)\overline{f(h)}\,dh\right)\overline{\psi_B(n)}\,dn.
\]
By exchanging the order of integration (we can do this since $[N(X)]$ is compact), we get
\[
  \mathcal{F}_{\psi_B}\left(\theta(\varphi,f)\right) (g)=\int_{[\U(V)]}\left(\int_{[N(X)]}\theta_\varphi(ng,h)\overline{\psi_B(n)}\,dn\right)\overline{f(h)}\,dh,
\]
where the inner integration can be further simplified as 
\begin{align*}
  \int_{[N(X)]}\theta_\varphi(ng,h)\overline{\psi_B(n)}\,dn&=\int_{[N(X)]}\sum_{T\in (Y\otimes V)(F)}\left(\omega(ng,h)\cdot\varphi\right)(T)\cdot\overline{\psi_B(n)}\,dn\\
  &=\sum_{T\in (Y\otimes V)(F)}\int_{[N(X)]}\left(\omega(g,h)\cdot\varphi\right)(T)\cdot\psi_{T^*(V)}(n)\cdot\overline{\psi_B(n)}\,dn\\
  &=\Vol([N(X)])\cdot\sum_{T\in \mathcal{O}_B(F)}\left(\omega(g,h)\cdot\varphi\right)(T).
\end{align*}
Here $\mathcal{O}_B \subset \Hom_E(X^c,V)$ is the closed $F$-subvariety defined by the equation
\[  
  \mathcal{O}_B = \{T : T^*(V) = B^c \},
\]
and $\Vol([N(X)])$ is the volume of $[N(X)]$. For simplicity, we assume that $\Vol([N(X)])=1$. 
\vskip 5pt

Hence, if $\mathcal{O}_B$ is empty, i.e. if there is no embedding $B^c \hookrightarrow V$, then 
\[
  \mathcal{F}_{\psi_B}\left(\theta(\varphi,f)\right)=0. 
\]
On the other hand, if $\mathcal{O}_B$ is nonempty, then Witt's theorem says that $\U(V)(F)$ acts transitively on $\mathcal{O}_B(F)$. If we fix a base point $j \in \mathcal{O}_B(F)$, and write $V = j(X^c) \oplus j(X^c)^{\perp}$, then the orbit map of $j$ induces a bijection
\[
  \mathcal{O}_B(F)\cong \U(V)(F)\big/ \U\left(j(B^c)^\perp\right)(F).
\]
Substituting these equalities back, the integration can be evaluated as follows:
\begin{align*}
  \mathcal{F}_{\psi_B}&\left(\theta(\varphi,f)\right) (g)\\
  &=\int_{[\U(V)]}\sum_{\gamma\in \U(j(B^c)^\perp)(F)\backslash\U(V)(F)}\left(\omega(g,h)\cdot\varphi\right)(\gamma^{-1} j)\cdot\overline{f(h)}\,dh\\
  &=\int_{\U(j(B^c)^\perp)(\A)\backslash\U(V)(\A)}\int_{[\U(j(B^c)^\perp)]}\left(\omega\left(h_1\right)\cdot\omega(g,h)\varphi\right)(j)\cdot\overline{f(h_1h)}\,dh_1\,dh\\
  &=\int_{\U(j(B^c)^\perp)(\A)\backslash\U(V)(\A)}(\omega(g,h)\varphi)(j)\\
  &\qquad\qquad\times\Bigg(\int_{[\U(j(B^c)^\perp)]}\left(h\cdot\overline{f}\right)(h_1)\cdot\chi_W\circ i^{-1}\left({\det}_{j(X^c)^\perp}(h_1)\right)\,dh_1\Bigg) \,dh.
\end{align*}
Here in the last equality, we have made use of the fact that $h_1\in \U\left(j(B^c)^\perp\right)(\A)$ stabilizes $j$ and the formula (\ref{F:Schro-1}) 
\[
  \omega\left(h_1\right)\cdot\Big(\omega(g,h)\varphi\Big)(j)=\chi_W\circ i^{-1}\left({\det}_{j(X^c)^\perp}(h_1)\right)\cdot\omega(g,h)\varphi(j).
\] 
Notice that the inner integration of the last equality is nothing but the $\mathcal{P}_{j(X^c)^\perp}\left(h\cdot\overline{f},\chi_W^{-1}\circ i^{-1}\right)$. We summarize these computations as follows.
\begin{Prop} \label{P:G-SFJ}
Fix a nondegenerate Hermitian form $B$ on $X$. Given an irreducible cuspidal representation $\pi$ of $\U(V)$, we have
\begin{align*}\tag{\dag}\label{Eqn.dagger-0}
\mathcal{F}_{\psi_B}\left(\theta(\varphi,f)\right) (g)=\int_{\U(j(B^c)^\perp)(\A)\backslash\U(V)(\A)}(\omega(g,h)\varphi)(j)\cdot\mathcal{P}_{j(X^c)^\perp}\left(h\cdot\overline{f},\chi_W^{-1}\circ i^{-1}\right)\,dh
\end{align*}
for $\varphi\in\mathcal{S}\left((Y\otimes V)(\A)\right)$ and $f\in\pi$. Consequently, the cuspidal representation $\pi^\vee$ has non-zero $\left(\U\left(j(B^c)^\perp\right), \chi_W^{-1}\circ i^{-1}\circ{\det}_{j(X^c)^\perp} \right)$-period if and only if $\Theta(\pi)$ has non-zero $\left(N(X),\psi_B\right)$-coefficient.
\end{Prop}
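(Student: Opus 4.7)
The plan is to carry out a direct adelic unfolding of the Fourier coefficient $\mathcal{F}_{\psi_B}(\theta(\varphi,f))$, which is the global counterpart of the local orbit computation underlying Theorem \ref{T:SFJ} and Corollary \ref{C:SFJ}. Formula (\dag) will be obtained by a term-by-term evaluation of the integrand using the Schr\"odinger-model formulas (\ref{F:Schro-1})--(\ref{F:Schro-3}) together with Witt's theorem over $F$; the biconditional consequence then follows from (\dag) via a standard density argument in the Schwartz space.

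First I would substitute the theta-kernel expansion $\theta_\varphi(g,h) = \sum_{T \in (Y\otimes V)(F)} (\omega(g,h)\varphi)(T)$ into the definition of the Fourier coefficient and interchange the $[N(X)]$- and $[\U(V)]$-integrations; this is legitimate since $[N(X)]$ is compact and $f$ is cuspidal, yielding absolute convergence. The inner integration over $[N(X)]$ is then evaluated termwise: by (\ref{F:Schro-3}), $n(A) \in N(X)$ acts on $(\omega(g,h)\varphi)(T)$ by the character $\psi_{T^*(V)}(n(A))$, so integration against $\overline{\psi_B}$ annihilates every summand except those with $T^*(V) = B^c$, i.e.\ $T \in \mathcal{O}_B(F)$. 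If $\mathcal{O}_B(F)$ is empty both sides vanish; otherwise, Witt's theorem (applied at the level of $F$-points) identifies $\mathcal{O}_B(F)$ with $\U(j(B^c)^\perp)(F)\backslash \U(V)(F)$. The standard unfolding trick then collapses this sum of cosets against the outer integration over $[\U(V)]$ into an integral over $\U(j(B^c)^\perp)(\A)\backslash \U(V)(\A)$, leaving an inner integral over $[\U(j(B^c)^\perp)]$. Invoking (\ref{F:Schro-1}) to pull out the action of $h_1 \in \U(j(B^c)^\perp)(\A)$ on $\varphi$ at the point $j$ produces the character $\chi_W \circ i^{-1} \circ \det_{j(X^c)^\perp}(h_1)$, and the inner integral is recognized as $\mathcal{P}_{j(X^c)^\perp}(h\cdot \overline{f}, \chi_W^{-1}\circ i^{-1})$, giving (\dag).

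For the biconditional consequence, the forward direction is immediate: if some $\theta(\varphi,f) \in \Theta(\pi)$ has a nonzero $(N(X),\psi_B)$-coefficient at some $g$, then (\dag) forces $\mathcal{P}_{j(X^c)^\perp}(h\cdot\overline{f}, \chi_W^{-1}\circ i^{-1}) \neq 0$ for some $h \in \U(V)(\A)$, and since $\pi$ is unitary cuspidal, $h\cdot \overline{f}$ is a cuspidal element of $\overline{\pi} \cong \pi^\vee$. For the converse, given $f \in \pi$ realizing a nonzero such period on $\pi^\vee$, I would choose $\varphi$ to be concentrated near $j \in (Y\otimes V)(\A)$ so that the outer $h$-integral in (\dag) picks out a small open set on which the integrand $\mathcal{P}_{j(X^c)^\perp}(h\cdot\overline{f},\chi_W^{-1}\circ i^{-1})$ remains nonvanishing; the smoothness of this period in $h$ and the density of the Schwartz space make such an arrangement possible.

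The main technical obstacle is justifying the interchange of summation and integration at the first step and ensuring absolute convergence of the intermediate expressions. Cuspidality of $f$ provides rapid decay along $\U(V)$, while the theta series $\sum_T (\omega(g,h)\varphi)(T)$ converges absolutely by Schwartz decay in $T$; these standard estimates from the theta-integral literature permit the formal manipulations above to be carried out rigorously. Every other step is dictated by the Schr\"odinger-model formulas and the orbit structure already exploited in the local proof.
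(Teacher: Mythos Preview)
Your proposal is correct and follows essentially the same approach as the paper: the unfolding via the Schr\"odinger-model formulas (\ref{F:Schro-1}) and (\ref{F:Schro-3}), the use of Witt's theorem over $F$ to parametrize $\mathcal{O}_B(F)$, and the recognition of the inner integral as the period $\mathcal{P}_{j(X^c)^\perp}$ are all exactly as in the paper. For the converse direction of the biconditional, the paper phrases your density argument slightly more precisely as the surjectivity of the restriction map $\mathcal{S}\big((Y\otimes V)(\A)\big) \to \mathcal{S}\big(\U(j(B^c)^\perp)(\A)\backslash \U(V)(\A),\,\chi_W^{-1}\circ i^{-1}\circ\det_{j(X^c)^\perp}\big)$ given by $\varphi \mapsto \big(h \mapsto (\omega(h)\varphi)(j)\big)$, which is the clean way to say that $\varphi$ can be chosen so that the outer integral pairs nontrivially against the nonzero period function.
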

\vskip 5pt

\begin{proof}
The ``if'' part is trivial, so we omit it here. For the ``only if'' part, suppose that $\pi^\vee$ has non-zero 
$\left(\U\left(j(B^c)^\perp\right), \chi_W^{-1}\circ i^{-1}\circ{\det}_{j(X^c)^\perp} \right)$-period. Then there exists some $f\in\pi$, such that
\[
  \mathcal{P}_{j(X^c)^\perp}\left(\overline{f},\chi_W^{-1}\circ i^{-1}\right)\neq0.
\]
Then the map
\[
  h\longmapsto \mathcal{P}_{j(X^c)^\perp}\left(h\cdot\overline{f},\chi_W^{-1}\circ i^{-1}\right)
\]
for $h\in\U(V)(\A)$ gives a non-zero function $C^\infty\left(\U\left(j(B^c)^\perp\right)(\A)\backslash \U(V)(\A),\chi_W^{-1}\circ i^{-1}\circ{\det}_{j(X^c)^\perp}\right)$. On the other hand, since $\mathcal{O}_B$ is a closed $F$-subvariety of $Y\otimes V\cong\Hom_E(X^c, V)$, the map
\[
  \varphi\longmapsto\Big(h\mapsto(\omega(h)\varphi)(j)\Big)
\]
gives a surjection between spaces of Schwartz sections
\[
  \mathcal{S}\left((Y\otimes V)(\A)\right)\longrightarrow \mathcal{S}\left(\U\left(j(B^c)^\perp\right)(\A)\backslash \U(V)(\A),\chi_W^{-1}\circ i^{-1}\circ{\det}_{j(X^c)^\perp}\right).
\]
In particular, we can choose an appropriate $\varphi\in\mathcal{S}\left((Y\otimes V)(\A)\right)$ such that
\[
  \mathcal{F}_{\psi_B}\left(\theta(\varphi,f)\right) (1)=\int_{\U(j(B^c)^\perp)(\A)\backslash\U(V)(\A)}(\omega(h)\varphi)(j)\cdot\mathcal{P}_{j(X^c)^\perp}\left(h\cdot\overline{f},\chi_W^{-1}\circ i^{-1}\right)\,dh \neq 0.
\]
This completes the proof.
\end{proof}
\vskip 5pt

As in the local case, there are some extra symmetries here. By keeping track of the $\U(B)(\A)$-actions on various objects, we can deduce a similar formula relating unitary Shalika period integrals and unitary FJ period integrals. 
\vskip 5pt

\begin{Thm} \label{T:G-SFJ}
In the context of Proposition \ref{P:G-SFJ}, if $\Theta(\pi)$ is contained in the space of cusp forms of $\U(W)$, then
\begin{align*}\tag{\dag\dag}\label{Eqn.dagger}
Sh&a_B\left(\theta(\varphi,f),\mu\right)\\
&=\int_{\mathcal{O}_B(\A)}(\omega(g,h)\varphi)(j)\cdot\overline{\mu({\det}_X(g))}\cdot Lin_{j(X^c)}\left(h\cdot\overline{f}\,,\, \widetilde{\mu}^{-1}\cdot\chi_V^{2}\chi_W^{-1}\circ i^{-1}\boxtimes\chi_W^{-1}\circ i^{-1}\right)\,dg\,dh
\end{align*}
for $\varphi\in\mathcal{S}\left((Y\otimes V)(\A)\right)$ and $f\in\pi$. Here the symbol
\[
  \int_{\mathcal{O}_B(\A)}\cdot\,\,dg\,dh
\]
is the integration over $\left(\U(j(B^c))^{\Delta} \times \U\left(j(B^c)^\perp\right)\right)(\A)\big\backslash (\U(B)\times \U(V))(\A)$, and $\U(j(B^c))^{\Delta}$ is the diagonally embedded subgroup 
\[   
  \begin{CD} 
    \Delta: \U(j(B^c)) @>({\rm Ad}(j) \circ \iota)^{-1} \times {\rm id}>> \U(B) \times \U(j(B^c)) @>>> \U(B) \times \U(V)   
  \end{CD} 
\]
similar to that described in \S \ref{SS:TOP}. Consequently, under the assumption the cuspidal representation $\pi^\vee$ has non-zero $\left(\widetilde{\mu}^{-1}\cdot\chi_V^{2}\chi_W^{-1}\circ i^{-1}, \chi_W^{-1}\circ i^{-1}\right)$-linear period if and only if $\Theta(\pi)$ has non-zero $(B,\mu)$-Shalika period.
\end{Thm}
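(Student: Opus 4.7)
The plan is to obtain the identity (\ref{Eqn.dagger}) by starting from the definition of the Shalika period integral, plugging in the formula (\ref{Eqn.dagger-0}) from Proposition \ref{P:G-SFJ} for the $(N(X),\psi_B)$-coefficient, and then folding the outer $[\U(B)]$-integration together with the $\U(j(B^c))$-part of the inner $\U(V)(\A)$-integration using the diagonal embedding $\Delta$. Concretely, I would first write
\[
Sha_B(\theta(\varphi,f),\mu) = \int_{[\U(B)]}\mathcal{F}_{\psi_B}(\theta(\varphi,f))(g)\cdot\overline{\mu({\det}_X g)}\,dg
\]
and substitute (\ref{Eqn.dagger-0}) to produce a double integral over $[\U(B)]\times\bigl(\U(j(B^c)^\perp)(\A)\backslash \U(V)(\A)\bigr)$. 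The inner factor already supplies the character $\chi_W^{-1}\circ i^{-1}$ on the $\U(j(B^c)^\perp)$-component through $\mathcal{P}_{j(X^c)^\perp}$; what remains is to extract the matching period on the $\U(j(B^c))$-component.

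The next step is to recognize the $\U(B)$-action on the Schwartz value $(\omega(g,h)\varphi)(j)$ as coming, through the composite isomorphism $\mathrm{Ad}(j)\circ\iota:\U(B)\to\U(j(B^c))$, from the $\U(V)$-action via the formulas (\ref{F:Schro-1})--(\ref{F:Schro-2}); the identity (\ref{E:det}) shows that the character $\chi_V\circ{\det}_X$ carried by the $\GL(X)$-Weil-action pulls back to $\chi_V^{-1}\circ{\det}_{j(X^c)}$ on $\U(j(B^c))$, so that, when combined with the factor $\chi_W\circ i^{-1}\circ{\det}_{j(X^c)^\perp}$ appearing on the $\U(V)$-side, the effective character picked up upon merging the two integrations is precisely $\widetilde{\mu}^{-1}\cdot\chi_V^{2}\chi_W^{-1}\circ i^{-1}$ on the $\U(j(X^c))$-factor, exactly as demanded by (\ref{Eqn.dagger}). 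Once this diagonal invariance under the automorphic subgroup $\U(j(B^c))^\Delta(F)$ is verified, the free $\U(j(B^c))(\A)/\U(j(B^c))(F)$-integration combines with $\mathcal{P}_{j(X^c)^\perp}$ to reconstitute the full linear period $Lin_{j(X^c)}$ inside the integrand, and a reparametrization of the remaining outer integration gives the domain $(\U(j(B^c))^\Delta\times\U(j(B^c)^\perp))(\A)\backslash(\U(B)\times\U(V))(\A)$, which is exactly $\mathcal{O}_B(\A)$ via the orbit map.

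For the consequence, the ``if'' direction is immediate from (\ref{Eqn.dagger}): if some $Sha_B(\theta(\varphi,f),\mu)\neq 0$, then the integrand on the right forces $Lin_{j(X^c)}(h\cdot\overline{f},\cdot)$ to be non-vanishing for some $h$. The ``only if'' direction mirrors the argument in Proposition \ref{P:G-SFJ}: given $f\in\pi$ with non-zero linear period, the map $(g,h)\mapsto\overline{\mu({\det}_X g)}\cdot Lin_{j(X^c)}(h\cdot\overline{f},\cdot)$ is a non-zero smooth section with the appropriate equivariance on the relevant quotient of $(\U(B)\times\U(V))(\A)$, and the orbit map gives a surjection from $\mathcal{S}((Y\otimes V)(\A))$ onto the corresponding Schwartz sections on $\mathcal{O}_B(\A)$. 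Choosing $\varphi$ suitably then makes the right-hand side of (\ref{Eqn.dagger}) non-zero.

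The main obstacle will be the careful character bookkeeping outlined in the second paragraph, in particular verifying that the two occurrences of $\chi_V$---one from the $\GL(X)$-action formula (\ref{F:Schro-2}) and one from the conjugate-linearity (\ref{E:det}) that intervenes when $\iota$ converts ${\det}_X$ into its $c$-conjugate---combine to produce $\chi_V^{2}$ and not, say, the trivial character; a secondary technical point will be justifying the exchange of integrations via the rapid decay ensured by the cuspidality hypothesis on $\Theta(\pi)$.
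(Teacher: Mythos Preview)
Your proposal is correct and follows exactly the approach the paper intends: the paper's own proof reads in its entirety ``Similar to the proof of Proposition \ref{P:G-SFJ},'' and what you have written is precisely the unfolding one obtains by repeating that computation while additionally tracking the $\U(B)$-action, in direct analogy with the passage from Proposition 3.2 to Theorem \ref{T:SFJ} in the local setting (\S \ref{SS:TOP}). Your identification of the character bookkeeping---in particular the use of (\ref{E:det}) and the identity $\chi_V\circ{\det}_X=\chi_V^2\circ i^{-1}\circ{\det}_X$ on $\U(B)$---matches the local computation verbatim, and your treatment of the ``only if'' direction via surjectivity onto Schwartz sections on $\mathcal{O}_B(\A)$ is the intended argument.
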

\vskip 5pt

\begin{proof}
Similar to the proof of Proposition \ref{P:G-SFJ}.
\end{proof}
\vskip 5pt

\begin{Rmk}
\begin{enumerate}
  \item In this theorem, the only reason for making a cuspidality assumption on $\Theta(\pi)$ is to guarantee the convergence of the integral $Sha_B\left(\theta(\varphi,f),\mu\right)$. Notice that if $\U(B)$ is anisotropic, then $[\U(B)]$ is compact and the integral $Sha_B\left(\theta(\varphi,f),\mu\right)$ is always convergent. Hence, in this special case, there is no need to make any assumption on $\Theta(\pi)$.
  \vskip 5pt

  \item Indeed, all computations in this subsection can also be carried out for symplectic-orthogonal and metaplectic-orthogonal dual pairs. 
\end{enumerate}
\end{Rmk}

\vskip 5pt

\subsection{\bf Application: a conjecture of Xiao-Zhang}
In a recent work of Jingwei Xiao and Wei Zhang, they studied the global Friedberg-Jacquet period for unitary groups using the relative trace formula. In the spirit of Theorem \ref{T:split-global}(ii), they gave the following conjecture:
\begin{Conj}\label{C:X-Z}
Let $V$ be a $2n$-dimensional Hermitian space and $V_0$ be an $n$-dimensional nondegenerate subspace of $V$. Let $\pi$ be an irreducible tempered cuspidal representation of $\U(V)$. Then $\pi$ has non-zero $(\mu_1,\mu_2)$-linear period (with respect to $V_0$) if and only if the following holds.
\begin{enumerate}
	\item The $L$-function $L\left(s,\pi\times\widetilde{\mu}_2^{-1}\right)$ is non-vanishing at $s=1/2$, where $\widetilde{\mu}_2$ is the pull-back of $\mu_2$ along the natural map
	\[
		\A_E^\times/\A^\times\longrightarrow E_1(\A).
	\]
  \vskip 5pt

  \item The standard base change  $\BC(\pi)$ of $\pi$ to $\GL(V)$ is of symplectic similitude type with similitude factor $\widetilde{\mu}_1 \widetilde{\mu}_2$; more precisely, one has an isobaric  decomposition
  \[  \BC(\pi) = \rho \boxplus  \left( \boxplus_{i\in I}  \Pi_i \right) \boxplus \rho^{\vee} \widetilde{\mu}_1 \widetilde{\mu}_2,  \]
  where for each $i \in I$,  $\Pi_i$ is a cuspidal representation of an appropriate $\GL_{n_i}$ such that  the twisted exterior square L-function $L\left(s, \Pi_i, \bigwedge^2\otimes\left(\widetilde{\mu}_1\widetilde{\mu}_2\right)^{-1}\right)$ has a pole at $s=1$.
    \vskip 5pt

\item For all place $v$ of $F$, $\pi_v$ has non-zero $(\mu_{1,v},\mu_{2,v})$-linear period.
\end{enumerate} 
\end{Conj}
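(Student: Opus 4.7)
The plan is to combine the global period-transfer of Theorem~\ref{T:G-SFJ} with the theory of global theta lifts in the equal-rank case and the classical Jacquet--Shalika characterization of Shalika periods on $\GL_{2n}$ (Theorem~\ref{T:split-global}(i)). The idea is to replace the FJ period on $\U(V)$ by a Shalika period on the maximally split $\U(W)$ of the same dimension, where the standard machinery of Rallis inner product and base change is available to extract both the central $L$-value condition and the symplectic similitude condition on $\BC(\pi)$.

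First, take $W$ to be the $2n$-dimensional maximally split skew-Hermitian space over $E$ and consider the global theta correspondence between $\U(V)$ and $\U(W)$ with respect to $\psi$ and a pair of splitting characters $(\chi_V,\chi_W)$ chosen so that $\chi_W = \widetilde{\mu}_2$. By Theorem~\ref{T:G-SFJ} (the global analog of Corollary~\ref{C:SFJ}), the nonvanishing of the $(\mu_1,\mu_2)$-FJ period of $\pi$ with respect to $V_0$ translates into the nonvanishing of a $(B,\mu')$-Shalika period of the global theta lift $\Theta(\pi)$ on $\U(W)$, where $B$ is a nondegenerate Hermitian form on $X$ with $B^c \cong V_0$ and $\mu' = \mu_1\mu_2^{-1}\chi_V|_{E_1}$. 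In particular $\Theta(\pi)\neq 0$. Since $\pi$ is tempered, Rallis' tower property combined with the local theta results of Section~\ref{S:theta} forces the lifts to all smaller members of the Witt tower to vanish, so $\Theta(\pi)$ lies in the cuspidal spectrum of $\U(W)$.

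Second, invoke the Rallis inner product formula for unitary dual pairs in equal rank (Gan--Qiu--Takeda, Yamana), which expresses $\langle \theta(\varphi,f),\theta(\varphi,f)\rangle$ as a product of local zeta integrals times the complete central $L$-value $L(\tfrac{1}{2},\BC(\pi)\otimes \chi_V^{-1}\chi_W)$. Nonvanishing of $\Theta(\pi)$ therefore implies $L(\tfrac{1}{2},\pi\times\widetilde{\mu}_2^{-1})\neq 0$ (the splitting character cancels when $\chi_V|_{\A^\times}=1$), yielding condition~(1). For condition~(2), use that $\BC(\Theta(\pi)) = \BC(\pi)\otimes\chi_V^{-1}\chi_W$ by the Arthur--Mok endoscopic classification. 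Since $\Theta(\pi)$ has a nonzero Shalika period, one argues summand-by-summand in the isobaric decomposition of $\BC(\Theta(\pi))$ that each cuspidal constituent has nonzero Shalika period on the corresponding $\GL$-factor, and then Theorem~\ref{T:split-global}(i) produces a pole of the (twisted) exterior square $L$-function at $s=1$ for each constituent. Twisting back by $\chi_V\chi_W^{-1}$ gives the desired isobaric decomposition of $\BC(\pi)$ of symplectic similitude type with similitude factor $\widetilde{\mu}_1\widetilde{\mu}_2$. Finally, condition~(3) is automatic, since the global FJ period factors locally.

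The most delicate points are the following. First, one must verify cuspidality of $\Theta(\pi)$ in the equal-rank case, which amounts to showing that all lower theta lifts vanish; temperedness of $\pi$ together with the local theta correspondence suffices, but must be handled carefully. Second, the descent of the Shalika period from $\U(W)$ to $\GL_{2n}(\A_E)$ is nontrivial when $\BC(\Theta(\pi))$ is non-cuspidal: one needs to match the character twists precisely so that the pole produced by Theorem~\ref{T:split-global}(i) is that of the $\widetilde{\mu}_1\widetilde{\mu}_2$-twisted exterior square $L$-function, and to track which constituents of the isobaric sum contribute to the Shalika period. Third, the Rallis inner product formula must be stated with exactly the splitting characters and normalization appearing in Section~\ref{S:theta}; any bookkeeping error in these character twists will propagate into the $L$-value identity and corrupt condition~(1). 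These three points are the real technical heart of the argument.
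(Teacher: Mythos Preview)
This statement is labeled a \emph{Conjecture} in the paper and is not proved there in full. What the paper establishes is one direction under hypotheses: Proposition~\ref{P:XZ-1} yields condition~(1) (assuming $\pi_v$ is generic at some finite place, which is automatic for tempered $\pi$), and Corollary~\ref{C:XZW} yields condition~(2) only under the further assumptions that $\pi_w$ is a discrete series at some finite split place $w$ and that $\widetilde{\mu}_1\widetilde{\mu}_2$ admits a square root among Hecke characters of $E^\times$. Your proposal is likewise only an attack on the ``only if'' direction, so it should be compared against these partial results rather than against a full proof.

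Your treatment of condition~(1) is essentially the paper's. One minor logical wrinkle: you invoke Theorem~\ref{T:G-SFJ}, whose hypothesis is that $\Theta(\pi)$ be cuspidal, to deduce that $\Theta(\pi)$ has a nonzero Shalika period (hence in particular $\Theta(\pi)\ne 0$), and only afterwards argue cuspidality. The paper avoids this circularity by first applying Proposition~\ref{P:G-SFJ}, which only concerns the nonvanishing of a Fourier coefficient and requires no cuspidality assumption, to get $\Theta(\pi)\ne 0$; it then argues first occurrence from local genericity, and finally applies the Rallis inner product formula \cite{Ya}. Your argument is easily repaired in the same way.

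The genuine gap is in your handling of condition~(2). You assert that since $\Theta(\pi)$ has a nonzero unitary Shalika period on $\U(W)$, one can ``argue summand-by-summand'' that each cuspidal constituent of $\BC(\Theta(\pi))$ on $\GL_{2n}(\A_E)$ has a nonzero $\GL$-Shalika period, and then apply Theorem~\ref{T:split-global}(i). But there is no established mechanism for transferring a Shalika period from $\U(W)$ to its (generally non-cuspidal) base change on $\GL_{2n}$; the unitary Shalika subgroup $S_B$ does not unfold to the $\GL$-Shalika subgroup in any direct way, and an isobaric sum does not inherit factorwise Shalika periods from an automorphic period on the unitary group. This is precisely why the paper does \emph{not} argue along your lines. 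Instead, Corollary~\ref{C:XZW} imports the argument of Pollack--Wan--Zydor \cite[\S 6]{PWZ} (see also \cite[Proof of Thm.~5.3]{GR}), and the two extra hypotheses are not cosmetic: the discrete-series-at-a-split-place assumption puts one locally on $\GL_{2n}(F_w)$ where the $\GL$-Shalika theory applies directly, and the square-root condition $\widetilde{\mu}_1\widetilde{\mu}_2=\xi^2$ is needed to untwist to the ordinary exterior-square situation. Without these, condition~(2) remains open, and the descent step you sketch does not close the gap.
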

\vskip 5pt

In fact, this conjecture may be viewed as a special case of a principle proposed by Getz-Wambach in \cite{GW}. Their principle asserts that for an irreducible cuspidal representation $\pi$ of $\U(V)$, the nonvanishing of the $(\mu_1,\mu_2)$-linear period is roughly equivalent to the non-vanishing of $(\widetilde{\mu}_1,\widetilde{\mu}_2)$-linear period for $\BC(\pi)$. In \cite[Cor. 1.3(3)]{PWZ}, Pollack-Wan-Zydor have proved the ``only if'' part of this conjecture under certain conditions on $\pi$ (for example, they assumed that $\pi$ is globally generic).
\vskip 5pt

We also remark that the characters $\mu_1$ and $\mu_2$ play symmetrical roles in the setup of Conjecture \ref{C:X-Z}, so it may seem a bit odd that $\mu_2$ is singled out in Conjecture \ref{C:X-Z}(1).
In fact, under Conjecture \ref{C:X-Z}(2), the nonvanishing of  $L\left(1/2,\pi\times\widetilde{\mu}_2^{-1}\right)$  is equivalent to that  of  $L\left(1/2,\pi\times\widetilde{\mu}_1^{-1}\right)$. To see this, note that Conjecture \ref{C:X-Z}(2) implies that
\[  \BC(\pi)   \cong \BC(\pi)^{\vee} \otimes \widetilde{\mu}_1\widetilde{\mu}_2. \]
Now the desired equivalence of nonvanishing follows from this observation by an application of the global functional equation for the L-function $L(s, \pi \times \widetilde{\mu}_2^{-1}) = L(s, \BC(\pi) \times \widetilde{\mu}_2^{-1})$:
\begin{align*}
	L\left(s,\BC(\pi)\times\widetilde{\mu}^{-1}_2\right)&=\epsilon\left(s,\BC(\pi)\times\widetilde{\mu}^{-1}_2\right)\cdot L\left(1-s,\BC(\pi)^\vee\times\widetilde{\mu}_2\right)\\
	&=\epsilon\left(s,\BC(\pi)\times\widetilde{\mu}^{-1}_2\right)\cdot L\left(1-s,\BC(\pi) \times\widetilde{\mu}^{-1}_1\right).
\end{align*}
\vskip 5pt

As an application of Theorem \ref{T:G-SFJ}, here we give a proof of the ``only if'' part of this conjecture using the theta correspondence, under some assumption on a local component of $\pi$ (which also intervenes in \cite{PWZ}; see Corollary \ref{C:XZW} below). We shall work with a slightly more general setting first.  

\begin{Prop} \label{P:XZ-1}
Let $V$ be a $2n$-dimensional Hermitian space and $V_0$ be an $n$-dimensional nondegenerate subspace of $V$. Let $\pi$ be an irreducible cuspidal representation of $\U(V)$ such that $\pi_v$ is generic at some finite place $v$ of $F$. If $\pi$ has non-zero $\left(\U\left(V_0^\perp\right), \mu\circ{\det}_{V_0^\perp} \right)$-period for some automorphic character $\mu$ of $E_1$, then the L-function
\[
  L\left(s,\pi\times\widetilde{\mu}^{-1}\right)
\]
is non-vanishing at $s=1/2$.
\end{Prop}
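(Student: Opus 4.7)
The strategy is to apply Proposition \ref{P:G-SFJ} to convert the Friedberg--Jacquet period non-vanishing into non-vanishing of a global theta lift at equal rank, and then to invoke the Rallis inner product formula to extract the desired central L-value non-vanishing.

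First, I set up the theta data. Choose a $2n$-dimensional skew-Hermitian space $W$ over $E$ with Witt decomposition $W = X \oplus Y$ ($\dim X = n$), together with an isometric embedding $j\colon X^c \hookrightarrow V$ whose image is $V_0$; let $B$ denote the resulting Hermitian form on $X$. Because $\dim W = 2n$ is even, the splitting constraint $\chi_W|_{\A^{\times}} = \omega_{E/F}^{2n} = 1$ permits the choice $\chi_W = \widetilde{\mu}$; fix also any admissible $\chi_V$. Since $\pi$ is unitary, complex-conjugating the hypothesis shows that $\pi^{\vee}$ has nonzero $(\U(V_0^{\perp}),\, \mu^{-1} \circ {\det}_{V_0^{\perp}})$-period, which (with the above choice $\chi_W = \widetilde{\mu}$) is precisely the $(\U(j(B^c)^{\perp}),\, \chi_W^{-1} \circ i^{-1} \circ {\det}_{j(X^c)^{\perp}})$-period appearing in Proposition \ref{P:G-SFJ}. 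That proposition then forces the global theta lift $\Theta(\pi)$ of $\pi$ to $\U(W)$ to possess a nonzero $(N(X), \psi_B)$-Fourier coefficient; in particular $\Theta(\pi) \neq 0$.

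Next, I apply the Rallis inner product formula for the equal-rank unitary dual pair $(\U(V), \U(W))$ with $\dim V = \dim W = 2n$, in the form established by Yamana (building on work of Gan--Ichino--Qiu, Ichino and others). This formula expresses the Petersson pairing of two cuspidal theta lifts as a product of explicit local zeta integrals against the central value $L(1/2, \pi \times \chi_W^{-1}) = L(1/2, \pi \times \widetilde{\mu}^{-1})$, up to a ratio of L-factors that is known to be nonzero. At every place, the local non-vanishing of the theta lift forced by $\Theta(\pi) \neq 0$ permits a choice of data making the local zeta integral nonzero; at the distinguished place $v$, the genericity of $\pi_v$ is precisely what makes the relevant local computation tractable and the local factor nonzero. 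Combining, the non-vanishing of the global theta lift forces $L(1/2, \pi \times \widetilde{\mu}^{-1}) \neq 0$, which is the conclusion.

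The principal anticipated obstacle is the potential non-cuspidality of $\Theta(\pi)$ at equal rank, since the Rallis inner product formula is cleanest in the cuspidal regime. If $\Theta(\pi)$ fails to be cuspidal, one must descend the Rallis tower to the first cuspidal occurrence and analyze how the corresponding residual identity (a regularized Siegel--Weil statement) propagates back to non-vanishing at $s = 1/2$. The genericity hypothesis at $v$ is expected to play a second role here, constraining the first-occurrence index via conservation-type relations so that the descent argument can be carried out. This is where the bulk of the technical work will lie.
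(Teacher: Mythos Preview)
Your overall strategy---convert the period nonvanishing into $\Theta(\pi)\neq 0$ via Proposition \ref{P:G-SFJ}, then invoke the Rallis inner product formula at equal rank---is exactly the paper's approach. However, you have misidentified how the genericity hypothesis enters, and as a result you anticipate a technical obstacle that does not arise.

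Genericity at $v$ is \emph{not} used to make a local zeta integral tractable or nonzero. Its sole role is to pin down the first occurrence: for a generic $\pi_v$, the local theta lift to $\U(W_v)$ with $\dim W_v = \dim V_v = 2n$ is already the first occurrence in that Witt tower. Since the global theta lift being nonzero forces all local theta lifts to be nonzero, and the local lift at $v$ cannot occur below rank $2n$, the global $\Theta(\pi)$ to $\U(W)$ must itself be the first occurrence. In particular $\Theta(\pi)$ is automatically cuspidal, and the Rallis inner product formula (in the form of \cite[Lemma 10.2]{Ya}) applies directly to yield $L(1/2,\pi\times\widetilde{\mu}^{-1})\neq 0$. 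There is no descent down the tower, no regularized Siegel--Weil analysis, and no ``bulk of technical work'' of the sort you describe in your final paragraph; once you see that genericity forces first occurrence at equal rank, the argument is complete in one line.

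One minor point: the paper takes $W$ to be the maximally split $2n$-dimensional skew-Hermitian space from the outset (so that $\U(W)$ is quasi-split and the Siegel parabolic exists), rather than building $W$ from an abstract $X$ with $j(X^c)=V_0$. This is cosmetic, but cleaner.
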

\vskip 5pt

\begin{proof}
The proof is similar to the proof of Theorem \ref{T:UFJ-1}. Let $f\in\pi$ be such that $\mathcal{P}_{V_0^\perp}(f,\mu)\neq 0$. Since $\pi$ is unitary, by taking complex conjugation, we see that
\[
  \mathcal{P}_{V_0^\perp}\left(\overline{f},\mu^{-1}\right)\neq 0.
\]
Let $W$ be the maximally split $2n$-dimensional skew-Hermitian space over $E$, and set 
\[  \chi_V=\chi_W=\widetilde{\mu}.\]
Consider the theta correspondence between $\U(V)$ and $\U(W)$ with respect to the additive character $\psi$ and auxiliary characters $(\chi_V,\chi_W)$. Applying Proposition \ref{P:G-SFJ}, we deduce that
\[
  \mathcal{F}_{\psi_B}\left(\theta(\varphi,f)\right)\neq 0
\]
for some $\varphi\in\mathcal{S}\left((Y\otimes V)(\A)\right)$. In particular, $\Theta(\pi)\neq 0$. Since $\pi_v$ is generic, by the fact that the local theta lift of $\pi_v$ to $\U(W_v)$ is the first occurrence, it is not hard to see that $\Theta(\pi)$ is the first occurrence of $\pi$ in the Witt tower containing $W$. Then by the Rallis inner product formula \cite[Lemma 10.2]{Ya}, the L-function $L\left(s,\pi\times\widetilde{\mu}^{-1}\right)$ is non-vanishing at $s=1/2$.

\end{proof}
\vskip 5pt

Notice that if $\pi$ is tempered cuspidal, then for any finite place $v$ of $F$ split in $E$, $\pi_v$ is a tempered representation of $\U(V_v)\cong\GL_{2n}(F_v)$, hence is generic. In fact, according to \cite{Ar} and \cite{At}, for almost all place $v$ of $F$, $\pi_v$ is generic.

\vskip 5pt

\begin{Cor}  \label{C:XZW}
In the context of Conjecture \ref{C:X-Z}, let $\pi$ be an irreducible tempered cuspidal representation of $\U(V)$ admitting non-zero $(\mu_1,\mu_2)$-linear period.
Then assertions (1) and (3) in Conjecture \ref{C:X-Z} hold. 
If we further assume that:
\begin{itemize}
	\item there is a finite place $w$ of $F$ split in $E$, such that $\pi_w$ is a discrete series of $\U(V_w)\cong\GL_{2n}(F_w)$;
	\item there exists some automorphic character $\xi$ of $E^\times$, such that $\widetilde{\mu}_1\widetilde{\mu}_2=\xi^2$.
\end{itemize}
Then assertion (2)  in Conjecture \ref{C:X-Z} also holds. 
\end{Cor}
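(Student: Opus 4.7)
The plan is to handle $(1)$ and $(3)$ directly, and to reduce $(2)$ to a Shalika-period problem on an equal-rank unitary group via theta correspondence, with the discrete-series hypothesis at the split place guaranteeing cuspidality of the theta lift.

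Assertion $(3)$ is immediate from the restricted tensor product structure of $\pi$: the global pairing $Lin_{V_0}(\cdot,\mu_1\boxtimes\mu_2)$ factors through local $\Hom$-functionals, and its non-vanishing forces each local Hom space to be non-zero. For $(1)$, I would use the Fubini decomposition
\[
  Lin_{V_0}(f,\mu_1\boxtimes\mu_2)=\int_{[\U(V_0)]}\mathcal{P}_{V_0^\perp}(g\cdot f,\mu_2)\cdot \overline{\mu_1(\det_{V_0}(g))}\,dg
\]
to obtain a translate $g\cdot f\in\pi$ with $\mathcal{P}_{V_0^\perp}(g\cdot f,\mu_2)\neq 0$. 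Since $\pi$ is tempered cuspidal, $\pi_v$ is generic at (almost) every finite place by the Arthur--Atobe results cited in the excerpt, so the hypothesis of Proposition \ref{P:XZ-1} is met and we conclude $L(1/2,\pi\times\widetilde{\mu}_2^{-1})\neq 0$.

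For $(2)$, let $W$ be the maximally split $2n$-dimensional skew-Hermitian space over $E$ and set up the global theta correspondence between $\U(V)$ and $\U(W)$ with splitting characters $\chi_V=\xi$ and $\chi_W=\widetilde{\mu}_2$ (after modifying $\xi$ by a quadratic character of $\A_E^\times/\A^\times$ if needed to enforce $\chi_V|_{\A^\times}=1$). The hypothesis $\xi^2=\widetilde{\mu}_1\widetilde{\mu}_2$ yields the numerical identity $\chi_V^2\chi_W^{-1}\circ i^{-1}=\mu_1$, which is precisely the character match required to invoke Theorem \ref{T:G-SFJ}. Write $\Pi:=\Theta(\pi)$. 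At the split place $w$, the local dual pair is identified with $\GL_{2n}(F_w)\times\GL_{2n}(F_w)$, and the local theta lift of a discrete series $\pi_w$ has its first occurrence at equal rank, vanishing strictly below in the Witt tower; the tower property of Rallis then forces the global lift $\Pi$ to be cuspidal. Theorem \ref{T:G-SFJ} then produces a non-zero $(B,\mu_1^2)$-Shalika period on $\Pi$ for a suitable Hermitian form $B$ on $X$.

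To finish $(2)$, one applies the split case (Theorem \ref{T:split-global}(i) with a twist) to $\BC(\Pi)$: as an automorphic representation of $\GL_{2n}(\A_E)$ with non-vanishing $\widetilde{\mu}_1^2$-twisted Shalika period, the isobaric decomposition of $\BC(\Pi)$ is of symplectic similitude type with similitude factor $\widetilde{\mu}_1^2$. Finally, the standard equal-rank compatibility between the unitary theta correspondence and base change yields $\BC(\Pi)\cong\BC(\pi)\otimes(\chi_V/\chi_W)=\BC(\pi)\otimes(\xi/\widetilde{\mu}_2)$; combining with $\xi^2=\widetilde{\mu}_1\widetilde{\mu}_2$, an elementary rearrangement gives $\BC(\pi)^\vee\cong\BC(\pi)\otimes(\widetilde{\mu}_1\widetilde{\mu}_2)^{-1}$ together with the prescribed shape of the isobaric sum, which is exactly Conjecture \ref{C:X-Z}(2). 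The main obstacle lies in this last step: verifying that the Shalika-period structure of $\Pi$ fully determines the isobaric decomposition of $\BC(\Pi)$ (not merely its symplectic self-duality up to the prescribed twist), and pinning down the exact theta/base-change compatibility formula with its precise twisting character so that the structure of $\BC(\Pi)$ transfers cleanly to $\BC(\pi)$ with similitude factor $\widetilde{\mu}_1\widetilde{\mu}_2$.
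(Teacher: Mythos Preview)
Your treatment of assertions (1) and (3) is correct and matches the paper: (3) is immediate from the factorization of the global period, and (1) follows from Proposition~\ref{P:XZ-1} once one extracts a nonzero $\mathcal{P}_{V_0^\perp}(\,\cdot\,,\mu_2)$ via Fubini, exactly as you write.

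For assertion (2), however, your route diverges from the paper's and contains a genuine gap. The paper does \emph{not} use the theta lift $\Pi=\Theta(\pi)$ at all for (2); it simply invokes the argument of \cite[\S 6]{PWZ} (see also \cite[Proof of Thm.~5.3]{GR}), which exploits the split-place hypothesis directly on $\pi$: at $w$ one has $\U(V_w)\cong\GL_{2n}(F_w)$, the local $(\mu_{1,w},\mu_{2,w})$-linear period is the classical one, and since $\pi_w$ is discrete the local result (the twisted form of Theorem~\ref{T:split-local}) forces the L-parameter of $\pi_w=\BC(\pi)_{v_1}$ to be $\GSp$-valued with similitude $\widetilde{\mu}_{1,w}\widetilde{\mu}_{2,w}$. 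Discreteness at this one place also forces $\BC(\pi)$ to be cuspidal, and the square root $\xi$ is then used to untwist and globalize the symplectic-type self-duality.

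Your detour through $\Pi=\Theta(\pi)$ breaks down at the step where you ``apply Theorem~\ref{T:split-global}(i) to $\BC(\Pi)$''. What Theorem~\ref{T:G-SFJ} gives you is a nonzero \emph{unitary} Shalika period of $\Pi$ on $\U(W)$, not a Shalika period of $\BC(\Pi)$ on $\GL_{2n}(\A_E)$; passing from the former to the latter is precisely the (unproved) Getz--Wambach-type transfer for Shalika periods. Even if you repair this by arguing locally at the split place $w$ (where the unitary Shalika period of $\Pi_w$ becomes an ordinary Shalika period on $\GL_{2n}(F_w)$, and $\Pi_w$ is again discrete), you would then be running the \cite{PWZ} argument on $\Pi$ rather than on $\pi$, only to translate back to $\BC(\pi)$ via the theta/base-change compatibility. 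That is strictly more work for the same conclusion: the whole theta step is an unnecessary detour. (There is also a minor issue with setting $\chi_V=\xi$: you need $\chi_V|_{\A^\times}=1$, while the hypothesis only gives $\xi|_{\A^\times}$ quadratic; modifying by a quadratic character of $\A_E^\times/\A^\times$ does not alter $\xi|_{\A^\times}$, so this fix does not do what you claim. But this is easily circumvented by choosing $\chi_V$ independently of $\xi$.)
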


\vskip 5pt

\begin{proof}
The assertion (1) in Conjecture \ref{C:X-Z} follows from Proposition \ref{P:XZ-1} directly, and the assertion (3) is trivial. To show the assertion (2) under the extra hypotheses, one can use the same argument as contained in \cite[\S 6]{PWZ}. See also \cite[Proof of Thm. 5.3]{GR} for another instance of this argument.

\end{proof}
\vskip 5pt

In the odd dimensional case we also have an analog of Proposition \ref{P:XZ-1}. 

\begin{Prop} \label{P:XZ-2}
Let $V$ be a $(2n+1)$-dimensional Hermitian space and $V_0$ be an $n$-dimensional nondegenerate subspace of $V$. Let $\pi$ be an irreducible cuspidal representation of $\U(V)$ such that $\pi_v$ is generic at some finite place $v$ of $F$.  If $\pi$ has non-zero $\left(\U\left(V_0^\perp\right), \mu\circ{\det}_{V_0^\perp} \right)$-period for some automorphic character $\mu$ of $E_1$, then the L-function
\[
  L\left(s,\pi\times\widetilde{\mu}^{-1}\right)
\]
has a pole at $s=1$.
\end{Prop}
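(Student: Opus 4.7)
The plan is to mimic the proof of Proposition \ref{P:XZ-1}, with the only real change being the dimension count and the consequent location of the L-function pole. Let $W$ be the maximally split $2n$-dimensional skew-Hermitian space over $E$, with a Witt decomposition $W = X \oplus Y$. Set $\chi_W = \widetilde{\mu}$ and fix an auxiliary automorphic character $\chi_V$ of $\A_E^\times$ with $\chi_V|_{\A^\times} = \omega_{E/F}^{2n+1} = \omega_{E/F}$. Consider the global theta correspondence between $\U(V)$ and $\U(W)$ with respect to $\psi$ and $(\chi_V, \chi_W)$.

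By the nonvanishing hypothesis and the unitarity of $\pi$, pick $f \in \pi$ with $\mathcal{P}_{V_0^\perp}(\overline{f}, \mu^{-1}) \neq 0$. Since $\dim X^c = n = \dim V_0$ and $V_0$ is nondegenerate, I can find an embedding $j : X^c \hookrightarrow V$ with $j(X^c) = V_0$, and hence $j(X^c)^\perp = V_0^\perp$; the pullback form $B$ on $X$ (characterized by $B^c = j^\ast \langle-,-\rangle_V$) is nondegenerate. Applying the transfer formula (\ref{Eqn.dagger-0}) in Proposition \ref{P:G-SFJ}, one then chooses a Schwartz function $\varphi \in \mathcal{S}((Y \otimes V)(\A))$ for which $\mathcal{F}_{\psi_B}(\theta(\varphi, f)) \neq 0$; in particular $\Theta(\pi) \neq 0$ as an automorphic representation of $\U(W)$.

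The final step is to identify $\Theta(\pi)$ with the first occurrence of $\pi$ in the Witt tower containing $W$, and then feed this into the Rallis inner product formula. Local genericity of $\pi_v$ at some finite place $v$ ensures, via the known first-occurrence result for generic representations of unitary groups, that $\theta_{W'_v}(\pi_v) = 0$ for every skew-Hermitian $W'$ below $W$ in the tower; by the global tower property, this forces $\Theta_{W'}(\pi) = 0$ globally as well. Since $\dim W = 2n < 2n+1 = \dim V$, the first occurrence lies below the stable range, and the Rallis inner product formula for unitary dual pairs (in the form of \cite[Lemma 10.2]{Ya} and the Siegel-Weil identity) translates this first-occurrence phenomenon into a pole of the partial L-function $L^S(s, \pi \times \widetilde{\mu}^{-1})$ at $s=1$. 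Because the local L-factors at places in $S$ are holomorphic and nonvanishing at $s=1$, one concludes that $L(s, \pi \times \widetilde{\mu}^{-1})$ itself has a pole at $s=1$.

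The main obstacle I anticipate is pinning down the precise form of the Rallis inner product formula in this below-equal-rank regime so that the pole of $L(s, \pi \times \widetilde{\mu}^{-1})$ is correctly located at $s=1$: the equal-rank statement used for Proposition \ref{P:XZ-1} is directly available in \cite{Ya}, whereas here one must invoke either the full Rallis tower / regularized Siegel-Weil identity for $\U(2n+1) \times \U(2n)$, or a seesaw argument reducing to the standard doubling integral, in order to guarantee that the pole appears at $s=1$ rather than at another half-integer. Once this Rallis-type identity is in hand, the argument proceeds exactly as in the even-dimensional case.
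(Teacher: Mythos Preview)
Your proposal is correct and follows exactly the approach the paper intends: the paper's proof reads in its entirety ``Similar to the proof of Proposition \ref{P:XZ-1}'', and you have faithfully expanded that sketch, correctly adjusting $\chi_V$ (since $\dim V$ is now odd), identifying $j(X^c)^\perp$ with the $(n+1)$-dimensional $V_0^\perp$, and replacing the central-value nonvanishing by a pole at $s=1$. Your anticipated obstacle is not a real gap: the needed statement---that first occurrence of $\pi$ at $\U(W)$ with $\dim W = \dim V - 1$ forces a pole of $L(s,\pi\times\widetilde{\mu}^{-1})$ at $s=1$---is covered by the same reference \cite{Ya} (the characterization of nonvanishing of global theta lifts in terms of poles of the standard $L$-function holds uniformly across the tower, not only in equal rank).
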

\vskip 5pt 

\begin{proof}
Similar to the proof of Proposition \ref{P:XZ-1}.
\end{proof}

\vskip 5pt

\begin{Rmk}
This proposition gives an analog of \cite[Cor. 1.3(2)]{PWZ} for unitary groups. Indeed, our method is also applicable to:
\begin{itemize}
  \item the $\left(\SO_{2n+1} , \SO_n \times \SO_{n+1}\right)$-period integral, by considering the theta correspondence between $\SO_{2n+1} \times \Mp_{2n}$;
  \vskip 5pt

  \item the $\left(\SO_{2n} , \SO_{n-1} \times \SO_{n+1}\right)$-period integral, by considering the theta correspondence between $\SO_{2n} \times \Sp_{2n-2}$.
\end{itemize}
In these two cases, our method recovers \cite[Cor. 1.3(1) \& Cor. 1.3(2)]{PWZ} under slightly weaker hypothesis on the cuspidal representation $\pi$.
\end{Rmk}

\vskip 10pt

\section{\bf Unitary Bump-Friedberg and Jacquet-Shalika} \label{S:final}
In this final and somewhat speculative section, we would like to suggest the consideration of two variants of the branching problems studied above. They are motivated by  the proof of the global Theorem \ref{T:split-global}. These global results were direct consequences of two Rankin-Selberg integrals for the exterior square L-functions, due to Jacquet-Shalika \cite{JS} and Bump-Friedberg \cite{BF} respectively.  
\vskip 5pt

More precisely, the global zeta integral of Jacquet-Shalika \cite{JS} considers the generalized Shalika period $Sha(\Pi, \sigma)$, where one takes for $\sigma$ a mirabolic Eisenstein series on $\GL_n$. Likewise, the global zeta integral of Bump-Friedberg \cite{BF} considers the generalized linear period $Lin(\pi, \sigma \boxtimes \chi)$, where $\sigma$ is a mirabolic Eisenstein series. The resulting   unfolding of these global zeta integrals indicates that these period spaces are generically of multiplicity one. What we would like to propose here is the study of the analog of these period spaces in the unitary setting.
\vskip 5pt

Such an analog in the unitary setting was in fact studied in a paper of Furusawa-Morimoto \cite{FM} for the generalized Shalika period on $\U_4$. In that case, the representation $\sigma$ is a representation of $\U_2$, and one takes $\sigma$ to be an Eisenstein series (when the $\U_2$ is quasi-split). However, it is well-known that 
there is no  mirabolic Eisenstein series for general $\U_n$ beyond this case, since there is no analog of the mirabolic subgroup.
\vskip 5pt

We will propose what we feel is the appropriate conjectural extension here.
The main observation is that the mirabolic Eisenstein series for $\GL(V)$ is built out of Schwarz functions on $V_{\A}$. Locally, the natural representation of $\GL(V)$ on $\mathcal{S}(V)$ is nothing but the Weil representation of the dual pair $\GL(V) \times \GL_1$. Hence, the analog of the mirabolic Eisenstein series  in the unitary setting should be a Weil representation of $\U_n$. 

\vskip 5pt

With this in mind, let us begin in the local setting, with $E/F$ a quadratic extension of local fields. Fix a nontrivial additive character $\psi$ of $F$ and a conjugate-symplectic character $\mu$ of $E^{\times}$, so that $\mu|_{F^{\times}} = \omega_{E/F}$. Then for a skew-Hermitian space $W$, one  has an associated Weil representation $\omega_{W, \mu, \psi}$.

\vskip 5pt

Suppose now that $V$ is a maximally split Hermitian space of even dimension, with $V = X \oplus Y$ a Witt decomposition and $P(X)$ the Siegel parabolic stabilizing $X$. 
Elements in the unipotent radical $N(Y)$ of $P(Y)$ can be regarded as skew-Hermitian forms on $X$. For such a nondegenerate element $B \in N(Y)$, with associated Shalika subgroup $S_B = \U(X,B) \ltimes N(X)$, one may consider the Hom space
\[  Sha_B(\pi, \omega_{B, \mu, \psi}) :=  \Hom_{S_B}\left( \pi, \omega_{B, \mu, \psi} \boxtimes \psi_B\right) \]
for $\pi \in {\rm Irr}(\U(V))$.
\vskip 5pt

Likewise, when $W$ is a skew-Hermitian space with a nondegenerate decomposition $W = W_0 \oplus W_0^{\perp}$ such that $\dim W_0 = \dim W_0^{\perp}$, one may consider the Hom space
\[   Lin_{W_0}\left( \pi, \omega_{W_0, \mu,\psi}  \right) :=  \Hom_{\U(W_0) \times \U(W_0^{\perp})} \left( \pi, \,  \omega_{W_0, \mu,\psi} \boxtimes 1_{\U(W_0^{\perp})}\right) \]
for $\pi \in {\rm Irr}(\U(V)$. 
\vskip 5pt

Motivated by the split case, one can ask if these Hom spaces are of dimension $\leq 1$, and if one could determine their dimension precisely, as $\pi$ varies over a generic L-packet, for example. One can also consider the global analog of these local Hom spaces, namely the relevant global period integrals. Again, one can ask if these period integrals are given by special values of the exterior square L-function for the relevant unitary group. We have not given much thought to these questions  but feel that they may be worthwhile to investigate. Indeed, they are analogous to the twisted  GGP conjecture proposed in the recent paper \cite{GGP}.

\vskip 15pt
\noindent{\bf Acknowledgments:} This paper was initiated after listening to Wei Zhang's talk in the conference ``Relative aspects of the Langlands program, L-function and Beyond Endoscopy" in May 2021, during which he highlighted Conjecture \ref{C:X-Z}. We thank him for the initial inspiration and subsequent discussions on this conjecture. We also thank Chen Wan and Raphael Beuzart-Plessis for several email exchanges concerning their results on the unitary Shalika period, and are grateful to Jayce Getz for pointing out the relevance of  \cite{PWZ} and his paper \cite{GW} with Wambach. The work for this paper is partially supported by a Singapore government MOE Tier 1 grant R-146-000-320-114.
\vskip 5pt

Finally, it is a pleasure and privilege to be able to dedicate this paper to Steve Kudla on the occasion of his 70th birthday. Our own mathematical taste and professional work have been significantly influenced by  Steve's foundational work on the topic of  theta correspondence as well as his lucid mathematical writings. As the reader will observe, the main technique used in this paper is the transfer of periods by theta correspondence, whose proof is modelled on Steve's computation of the Jacquet modules of the Weil representation, given in his influential paper \cite{Ku} (which is his most highly cited paper on Mathscinet to date).
As such, we hope this paper is an appropriate contribution to his 70th birthday volume.

\vskip 15pt

\end{document}